\numberwithin{equation}{section}
\theoremstyle{plain}
\newtheorem{teo}{Theorem}[section]
\newtheorem{cor}[teo]{Corollary}
\newtheorem{lem}[teo]{Lemma}
\newtheorem{teoA}{Theorem}
\newtheorem{corA}[teoA]{Corollary}
\theoremstyle{remark}
\newtheorem{obs}[teo]{Remark}
\newcommand{\R}{\ensuremath{{\mathbb{R}}}}
\newcommand{\C}{\ensuremath{{\mathbb{C}}}}
\newcommand{\Z}{\ensuremath{{\mathds{Z}}}}
\newcommand{\g}{\ensuremath{\mathtt{g}}}
\newcommand{\hg}{\ensuremath{\widehat{\mathtt{g}}}}
\newcommand{\tg}{\ensuremath{\widetilde{\mathtt{g}}}}
\renewcommand\vec[1]{\boldsymbol{#1}}
\newcommand{\dispdot}[2][.2ex]{\dot{\raisebox{0pt}[\dimexpr\height+#1][\depth]{$#2$}}} % https://tex.stackexchange.com/questions/128301/adjusting-the-position-of-dot?rq=1
\begin{document}

\title[The metric structure of compact rank-one ECS manifolds]{The metric structure of compact rank-one ECS manifolds}

\author[Andrzej Derdzinski]{Andrzej Derdzinski}
\author[Ivo Terek]{Ivo Terek}
\address{Department of Mathematics, The Ohio State University, Columbus, OH 43210, USA}
\email{andrzej@math.ohio-state.edu}
\email{terekcouto.1@osu.edu}

%\address[Ivo Terek]{Department of Mathematics, The Ohio State University, Columbus, OH 43210, USA}

%\keywords{Parallel Weyl tensor $\cdot$ Conformally symmetric manifolds $\cdot$ Compact pseudo-Riemannian manifolds}
\subjclass[2010]{53C50}

\begin{abstract}
Pseudo-Riemannian manifolds with nonzero parallel Weyl tensor \linebreak[4]which are not locally symmetric are known as ECS manifolds. Every ECS manifold carries a distinguished null parallel distribution $\mathcal{D}$, the rank $d\in \{1,2\}$ of which is referred to as the rank of the manifold itself. Under a natural genericity assumption on the Weyl tensor, we fully describe the universal coverings of compact rank-one ECS manifolds. We then show that any generic compact rank-one ECS manifold must be \emph{translational}, in the sense that the holonomy group of the natural flat connection induced on $\mathcal{D}$ is either trivial or isomorphic to $\Z_2$. We also prove that all four-dimensional rank-one ECS manifolds are noncompact, this time without assuming genericity, as it is always the case in dimension four.
\end{abstract}
\maketitle

\section*{Introduction and main results}

A pseudo-Riemannian manifold of dimension $n\geq 4$ whose Weyl tensor is parallel is referred to as \emph{conformally symmetric} \cite{Chaki-Gupta}, and it is called \emph{essentially conformally symmetric} (briefly, \emph{ECS}) \cite{Roter74} if, in addition, it is neither conformally flat nor locally symmetric. It was shown by Roter that ECS manifolds exist in every dimension \cite[Corollary 3]{Roter74} and that they all have indefinite metric signatures \cite[Theorem 2]{TensorNS77}. The local structure of ECS manifolds is fully known \cite{local_structure}.

Conformal symmetry of $(M,\g)$ is one of the \emph{natural linear conditions} imposed on the covariant derivatives of the ${\rm SO}(p,q)$-irreducible components of its curvature tensor, in the sense of Besse \cite[Chapter 16]{besse}. The interest in this subject is reflected in more work by other authors: Cahen and Kerbrat \cite[Section 2]{Cahen-Kerbrat}, Hotlo\'{s} \cite{hotlos},  Mantica and Suh \cite[Section 3]{mantica-suh}, Schliebner \cite{schliebner}, and Deszcz et al. in \cite[Sect.\ 4]{deszcz-glogowska-hotlos-zafindratafa}, \cite[Theorem 6.1]{deszcz-glogowska-hotlos-torgasev-zafindratafa}. The techniques used in the study of ECS manifolds are themselves also of interest, appearing in \cite[Example 2.2]{deszcz-hotlos}, \cite{suh-kwon-yang}, \cite[Theorem 3]{alekseevsky-galaev}, \cite{calvino-louzao-garcia-rio-seoane-bascoy-vazquez-lorenzo}, \cite[Theorem 3.9]{calvino-louzao-garcia-rio-vazquez-abal-vazquez-lorenzo}, \cite[Lemma 3]{leistner-schliebner}, \cite{mantica-molinari}, \cite[proofs of Theorems 1.1 and 4.5]{tran} and \cite{terek}.

As shown by Olszak \cite{Olszak1}, every ECS manifold $(M,\g)$ carries a distinguished null parallel distribution $\mathcal{D}$, whose sections are the vector fields corresponding under $\g$ to $1$-forms $\xi$ with $\xi\wedge [{\rm W}(v',v'',\cdot,\cdot)]=0$ for all vector fields $v',v''$. The rank of $\mathcal{D}$ -- always equal to $1$ or $2$ -- is referred to as the \emph{rank of $(M,\g)$} \cite{DT_2}. In the rank-one case, the focus of this paper, we also call the ECS manifold in question
\begin{equation}\label{eqn:dichotomy}
  \parbox{.8385\textwidth}{\emph{translational} or \emph{dilational}, depending on whether the holonomy group of \,the\, natural\, flat\, connection\, induced\, on\, $\,\mathcal{D}\,$\, is\, finite\, or\, infinite.}\tag{$\ast$}
\end{equation}

In \cite{DT_1} and \cite{AGAG10}, compact rank-one ECS manifolds of every dimension $n\geq 5$ were constructed as suitable quotients of what we call \emph{model ECS manifolds} (see Section \ref{sec:model}). All such compact examples are geodesically complete and translational, but none of them is locally homogeneous. On the other hand, we show in \cite{DT_3} that, under a natural genericity assumption on the Weyl tensor (see Section \ref{sec:generic} and the end of Section \ref{sec:comp_Dperp}), quotients of dilational model ECS manifolds cannot be compact unless they are locally homogeneous. We have recently found \cite{clh} compact dilational examples in all odd dimensions $n\geq 5$, including locally homogeneous ones. They are all nongeneric and incomplete.

It is still not known whether a compact ECS manifold can be four-dimensional, or have rank two.

This paper provides partial answers to the above questions. We start with structure theorems, calling a rank-one ECS manifold \emph{$\mathcal{D}^\perp\!$-complete} if all the leaves of $\mathcal{D}^\perp\!$ are complete relative to the induced connections (this definition makes sense for any foliation with totally geodesic leaves, such as $\mathcal{D}^\perp\!$ itself), and \emph{maximally com\-ple\-te} if every non-com\-ple\-te maximal geodesic in its universal covering intersects all leaves of $\mathcal{D}^\perp\!$. Note that maximal completeness implies $\mathcal{D}^\perp\!$-completeness, while (due to \eqref{eqn:t-levels} below) it follows from completeness.

\begin{teoA}\label{teoA:generic_mc}
Any compact $\mathcal{D}^\perp\!$-complete rank-one ECS manifold is necessarily maximally complete.
\end{teoA}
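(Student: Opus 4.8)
The plan is to pass to the universal covering $\widetilde{M}$ and exploit the affine behaviour, along geodesics, of a natural ``time function''. On $\widetilde{M}$ the induced flat connection on $\mathcal{D}$ has trivial holonomy (by simple connectivity), so $\mathcal{D}$ is spanned by a $\g$-parallel null vector field $\xi$; then $\xi^\flat$ is parallel, hence closed, hence exact, say $\xi^\flat = dt$ with $t\colon\widetilde{M}\to\R$, and by \eqref{eqn:t-levels} the leaves of $\mathcal{D}^\perp = \ker dt$ are exactly the level sets of $t$. Write $I := t(\widetilde{M})$; since $dt$ is nowhere zero, this is an open interval. From $\nabla dt = \nabla\xi^\flat = 0$ one gets, for every geodesic $\gamma$, that $(t\circ\gamma)'' = \mathrm{Hess}\,t(\dot\gamma,\dot\gamma) = 0$, so $t(\gamma(u)) = au + b$ with $a = \g(\xi,\dot\gamma)$ constant along $\gamma$. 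Hence $\gamma$ meets every leaf of $\mathcal{D}^\perp$ if and only if $t\circ\gamma$ is onto $I$, i.e.\ if and only if the maximal domain of $\gamma$ equals $\{\,u : au + b \in I\,\}$; proving maximal completeness means establishing this for every non-complete maximal geodesic $\gamma$ in $\widetilde{M}$.

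So let $\gamma\colon(\alpha,\beta)\to\widetilde{M}$ be such a geodesic. If $a = 0$, then $t\circ\gamma$ is constant, so $\gamma$ lies in a single level set of $t$, that is, in one leaf $L$ of $\mathcal{D}^\perp$. This $L$ covers a leaf of $\mathcal{D}^\perp$ in the compact manifold $M$, which is complete by hypothesis; as a covering of a complete affine manifold is itself complete, $L$ is complete for its induced connection, and since $L$ is totally geodesic, $\gamma$ is a complete geodesic of $L$ and therefore extends to a geodesic of $\widetilde{M}$ defined on all of $\R$ --- contradicting non-completeness. Thus $a\neq 0$, and, reversing the parameter if need be, we may assume $a > 0$.

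It remains to prove that such a $\gamma$ is defined on all of $\{\,u : au + b \in I\,\}$, equivalently that it cannot break down while $t\circ\gamma$ stays in the interior of $I$. Here I would use the local normal form of ECS metrics \cite{local_structure}, together with the structure of $\widetilde{M}$ obtained in the earlier sections: along $\gamma$ the metric is, in adapted coordinates $(t,s,x)$, of the form whose only non-flat term is $\kappa(t,x)\,dt^{\otimes 2}$, with $\kappa$ a quadratic form in the $\mathcal{D}^\perp$-transverse variables $x$, say $\kappa(t,x) = \langle f(t)x, x\rangle$ with $f$ depending smoothly on $t\in I$. Writing $\dot\gamma = a\,\partial_t + \dot s\,\partial_s + \dot x$, the constant $a = \g(\xi,\dot\gamma)$ pins down the $\partial_t$-component, the $x$-block of the geodesic equation becomes the \emph{linear} system $\ddot x = a^2 f(au+b)\,x$, and --- the ``energy'' $\g(\dot\gamma,\dot\gamma)$ being constant --- the component $\dot s$ is recovered from $a$, $x$, $\dot x$ by one quadrature. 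A linear system whose coefficients are smooth on a neighbourhood of the (relatively compact, when $\beta<\infty$) $t$-interval swept out by $\gamma$ has solutions on that entire interval, so $x$, $\dot x$, $\dot s$ stay bounded, $\gamma$ converges in $\widetilde{M}$, and $\gamma$ extends past its alleged endpoint, a contradiction. Therefore the maximal domain of $\gamma$ is $\{\,u : au + b \in I\,\}$, $t\circ\gamma$ is onto $I$, and $\gamma$ meets every leaf of $\mathcal{D}^\perp$, which is what maximal completeness requires.

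The step I expect to be the main obstacle is the last one: turning the purely \emph{local} normal form of \cite{local_structure} into a description valid along the whole transversal geodesic $\gamma$ --- that is, producing adapted data $(t,s,x)$ and a profile $f(t)$ that are smooth for all $t\in I$ rather than on small opens only. This is exactly where $\mathcal{D}^\perp$-completeness is needed: completeness --- hence ``fullness'' --- of every leaf of $\mathcal{D}^\perp$ is what lets the adapted picture be propagated from leaf to leaf across the $t$-range traversed by $\gamma$, while compactness of $M$ keeps the coefficients under control. Once this is done, the indefinite signature --- which \emph{a priori} would permit $\dot\gamma$ to run off to infinity in $T\widetilde{M}$ while $t$ stays bounded --- causes no trouble, precisely because the transverse geodesic equation turns out to be linear rather than merely smooth.
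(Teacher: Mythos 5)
Your reduction is sound and matches the paper's: non‑complete maximal geodesics of $\widetilde{M}$ tangent to $\widetilde{\mathcal{D}}^\perp$ are ruled out by leaf‑completeness together with total geodesy of the leaves, and for transverse geodesics the affinity of $t\circ\gamma$ (from $\nabla{\rm d}t=0$) turns the claim into ``the $t$-range of every maximal transverse geodesic is all of $I$.'' The gap is exactly where you place it, and it is not a technicality one can wave through: your extension argument needs adapted coordinates $(t,s,x)$, with the metric in Roter form and a single smooth profile defined on a whole neighbourhood of the $t$-interval swept out by $\gamma$, whereas \cite{local_structure} only provides such coordinates near a point. Producing them globally is essentially the content of Theorem \ref{teoA:uc_model}, whose proof (Appendix \ref{app:model}) begins by fixing a maximal null geodesic parametrized by all of $I$ --- that is, it already presupposes maximal completeness. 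So as written your route is circular, and the sentence ``completeness of every leaf \dots\ lets the adapted picture be propagated from leaf to leaf'' is a restatement of the difficulty, not a proof.

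The paper's Appendix \ref{app:technical} obtains the linear‑ODE mechanism you want without any global coordinates: fix one curve $y\colon I\to\widetilde{M}$ parametrized by $t$, solve along it the linear second‑order equation \eqref{eqn:diff_eq_Q} for a section $\mathbf{z}$ of $\widetilde{\mathcal{D}}^\perp$ (linearity gives existence on all of $I$), and set $x(t,s)=\exp_{y(t)}s\mathbf{z}(t)$, which is defined on all of $I\times\R$ precisely because the leaves are complete, flat and totally geodesic, so their exponential maps are global --- this is the only place $\mathcal{D}^\perp\!$-completeness enters. A Jacobi‑type computation then shows that $t\mapsto x(t,1)$ is a geodesic, and such geodesics realize every $t$-normalized transverse initial velocity; hence every maximal transverse geodesic is defined over all of $I$ and meets every leaf by \eqref{eqn:t-levels}. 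In short: rather than straightening the metric and integrating the geodesic equation of $\gamma$ in coordinates, one integrates a linear Jacobi‑like equation along a fixed reference curve and uses the leafwise exponentials to transport the result across the leaves; that is the step your proposal is missing.
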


\begin{teoA}\label{teo:generic_Dperp}
  Every generic compact rank-one ECS manifold is both maximally complete and $\mathcal{D}^\perp\!$-complete.
\end{teoA}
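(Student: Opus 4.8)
The plan is to deduce Theorem B from Theorem A by showing that a generic compact rank-one ECS manifold is automatically $\mathcal{D}^\perp\!$-complete; once that is in hand, Theorem A immediately upgrades $\mathcal{D}^\perp\!$-completeness to maximal completeness, and both conclusions follow. So the real work is the $\mathcal{D}^\perp\!$-completeness claim. First I would pass to the universal covering $\widetilde M$, which by the genericity hypothesis should be (up to the results recalled in Section~\ref{sec:model}) identifiable with a model ECS manifold, or at least fibered in a controlled way over a one-dimensional base by the leaves of $\mathcal{D}^\perp\!$. On such a model the metric has the explicit ``plane-wave-type'' normal form, and the leaves of $\mathcal{D}^\perp\!$ are the level sets of a globally defined affine function $t$ whose differential spans the annihilator of $\mathcal{D}^\perp\!$; the induced connection on each leaf is then essentially a product/affine connection whose geodesic equations decouple.

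The key steps, in order, are: \emph{(i)} record that $\mathcal{D}^\perp\!$ is a totally geodesic foliation, so ``completeness of the leaves'' is well posed, and that a leaf is complete iff every affinely parametrized geodesic of the induced connection extends to all of $\R$. \emph{(ii)} Using the genericity normal form, write down the induced connection on a leaf explicitly; here the leaf carries a Riemannian-type product of a flat factor (the ambient $\mathcal{D}$-direction and the transverse Euclidean factor) with possibly a ``Walker coordinate'' direction, and one checks directly that its geodesics are defined for all time because the Christoffel symbols are polynomial in coordinates that themselves evolve linearly. \emph{(iii)} Transfer this completeness downstairs: the deck group acts by isometries preserving $\mathcal{D}$ and $\mathcal{D}^\perp\!$, hence permutes the leaves, so completeness of leaves of $\widetilde{\mathcal{D}^\perp\!}$ forces completeness of leaves of $\mathcal{D}^\perp\!$ on the compact quotient. \emph{(iv)} Invoke Theorem A to conclude maximal completeness, and note that maximal completeness implies $\mathcal{D}^\perp\!$-completeness (as stated in the excerpt), so both assertions of Theorem B hold.

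The main obstacle I anticipate is step~\emph{(ii)}: making precise, in the generic case, that the universal covering really does have the global normal form with a complete affine coordinate $t$ along the $\mathcal{D}^\perp\!$-transverse direction, and that the genericity condition on the Weyl tensor is exactly what rules out pathological incomplete leaves. This is where the structure theorem for generic compact rank-one ECS manifolds (the "universal covering" description advertised in the abstract and presumably established in Section~\ref{sec:comp_Dperp}) must be quoted carefully; the subtlety is that a priori a leaf of $\mathcal{D}^\perp\!$ could fail to be complete even when the ambient manifold is compact, and genericity is needed to exclude the "dilational" behavior that would otherwise produce incompleteness along $\mathcal{D}$. A secondary technical point is verifying that the affine structure on the leaves is preserved well enough under the deck transformations that no monodromy obstruction to global completeness appears; this should follow from \eqref{eqn:dichotomy} together with the fact, to be used from Section~\ref{sec:generic}, that genericity already constrains the holonomy of the flat connection on $\mathcal{D}$.
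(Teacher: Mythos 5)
Your overall plan---prove $\mathcal{D}^\perp\!$-completeness first and then let Theorem \ref{teoA:generic_mc} supply maximal completeness---matches the paper's logical architecture. The problem lies in how you propose to obtain $\mathcal{D}^\perp\!$-completeness. In step (ii) you want to identify the universal covering with a model ECS manifold (or at least give it a global normal form with a global affine coordinate $t$) and then read off completeness of the leaves from explicit Christoffel symbols. But that identification is precisely Corollary \ref{teoA:uc_cpct_model}, which in this paper is a consequence of Theorems \ref{teo:generic_Dperp} and \ref{teoA:uc_model}, and Theorem \ref{teoA:uc_model} needs maximal completeness as a hypothesis. Using the global model structure to prove Theorem \ref{teo:generic_Dperp} is therefore circular. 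What is actually available at this stage is only the local statement \eqref{eqn:local_structure_cf}: each leaf of $\widetilde{\mathcal{D}}^\perp$ is a flat affine manifold (Remark \ref{obs:tg_leaves}), but a priori it could be an \emph{incomplete} one, and no coordinate computation in a local chart can rule that out. You correctly flag this as ``the main obstacle,'' but the proposal does not overcome it.

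The paper's route is different and avoids the circularity. It first proves an abstract completeness criterion, Lemma \ref{lem:abstract_completeness}: a flat torsionfree connection on $L$ is complete provided one exhibits a space $\mathcal{X}$ of complete parallel vector fields trivializing a distribution $\mathcal{P}$, and a space $\mathcal{Y}$ of complete vector fields projecting onto a parallel trivialization of $TL/\mathcal{P}$. On a leaf $L$ of $\widetilde{\mathcal{D}}^\perp$ one takes $\mathcal{X}=\R{\bf w}$, complete by (\ref{eqn:psi-dt}-b), and builds $\mathcal{Y}$ from vector fields $\widetilde{\vec{y}}_j=\psi^{2j-1-m}\vec{y}_j$ that are $\Gamma$-invariant and hence descend to the compact quotient $M$: completeness of the $\widetilde{\vec{y}}_j$ comes from compactness of $M$, not from any normal form upstairs. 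Genericity enters exactly in making this $\Gamma$-invariant frame possible. When the image of $\Gamma$ under (\ref{eqn:hom_qp_q}-b) is trivial, genericity of $A$ forces the image of \eqref{eqn:homomorphism_C} to be finite, so after passing to a finite-index subgroup via \eqref{eqn:passing_to} one may assume $\Gamma$ acts trivially on $V$; when that image is infinite, \eqref{eqn:homomorphism_C} forces $A$ to be nilpotent, and the distinguished basis \eqref{eqn:fit_A_g}--\eqref{eqn:two_isometries} together with the weights $\psi^{2j-1-m}$ and \eqref{eqn:psi-q} yields invariance. This is not, as you suggest, a matter of genericity ``excluding dilational behavior'' (that exclusion is Theorem \ref{teoA:gen_cpt_translational}, proved much later and relying on Theorem \ref{teo:generic_Dperp}); it is a holonomy-rigidity input used to construct the invariant frame. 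To repair your proposal you would need to replace step (ii) by an argument of this kind.
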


\begin{teoA}\label{teoA:uc_model}
 Any simply connected and maximally complete rank-one ECS manifold is isometric to a model ECS manifold.
\end{teoA}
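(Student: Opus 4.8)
The plan is to reduce Theorem~C to the known local structure of ECS manifolds \cite{local_structure} together with a developing-map argument, invoking maximal completeness at exactly one point: to keep the developing map from stopping short of a full model. Throughout, write $(M,\g)$ for the given simply connected, maximally complete, rank-one ECS manifold. The first step is to fix the canonical global data. On a simply connected base the natural flat connection on $\mathcal{D}$ from \eqref{eqn:dichotomy} has trivial holonomy, so---$\mathcal{D}$ being a line bundle---it admits a nowhere-zero parallel section $\vec u$; since $\mathcal{D}$ is null and $\nabla$-parallel, $\vec u$ is in fact $\nabla$-parallel as a vector field, and hence $\vec u^{\flat}=\g(\vec u,\cdot\,)$ is a parallel, thus closed, nowhere-zero $1$-form. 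Simple connectivity gives a submersion $t\colon M\to\R$ with $\mathrm{d}t=\vec u^{\flat}$; as $\mathcal{D}^{\perp}=\ker\vec u^{\flat}$, the leaves of $\mathcal{D}^{\perp}$ are precisely the level sets $t^{-1}(c)$, and $I:=t(M)$ is an open interval. Parallelism of $\mathrm{d}t$ makes $t$ an affine function of the affine parameter along every geodesic---this is \eqref{eqn:t-levels}---so a geodesic $\gamma$ with $\mathrm{d}t(\dot\gamma)\neq 0$ (call it \emph{transverse}), along which $t(\gamma(\tau))=a\tau+b$ with $a\neq0$, has maximal domain exactly $\{\tau:a\tau+b\in I\}$: if it is incomplete, maximal completeness forces it to meet every leaf, so its $t$-image is all of $I$, and if it is complete then $I=\R$. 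Finally, recall from the Introduction that maximal completeness implies $\mathcal{D}^{\perp}$-completeness, so every leaf of $\mathcal{D}^{\perp}$ is complete for its (totally geodesic) induced connection.

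Next I would build the developing map. By \cite{local_structure} every point of $M$ has a neighbourhood carrying adapted coordinates in which $\g$ has the standard model form of Section~\ref{sec:model} and the first coordinate is $t$ (after rescaling $\vec u$ once so that the normalizations match, $t$ being intrinsic up to a constant factor). The transition maps between two such charts lie in the pseudogroup of local isometries preserving this form, so on the simply connected $M$ they assemble into a local isometry $\mathrm{dev}\colon M\to\widehat M_{0}$ into a model ECS manifold $\widehat M_{0}$, with $\mathrm{dev}^{*}(\mathrm{d}t_{\widehat M_{0}})=\mathrm{d}t$. I claim the image $\widehat M:=\mathrm{dev}(M)$ is again a model. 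It is open; and it is a union of whole leaves of $\widehat M_{0}$, because if $\mathrm{dev}(p)$ lies on a leaf $\widehat\Lambda$, then $\mathrm{dev}$ restricts to a local affine isomorphism of the complete affine leaf of $\mathcal{D}^{\perp}$ through $p$ into $\widehat\Lambda$, hence---a local affine isomorphism out of a complete affine manifold being a covering map---onto all of $\widehat\Lambda$. Thus $\widehat M=J\times(\text{fibre})$ for a subinterval $J\subseteq\R$, which is exactly the underlying manifold of a model ECS manifold with interval $J$; write $J=I+c$ for the corresponding constant $c$.

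It remains to show that $\mathrm{dev}\colon M\to\widehat M$ is a diffeomorphism; being a local isometry, it is then an isometry, which proves the theorem. I would establish the geodesic-lifting property: every maximal geodesic of $\widehat M$ lifts, through any prescribed preimage of its initial point, to a maximal geodesic of $M$ with the same domain. A leaf-tangent geodesic of $\widehat M$ lifts to one staying inside a complete leaf of $M$, so it lifts with full domain $\R$. A transverse geodesic $\gamma$ of $\widehat M$ has domain $\{\tau:a\tau+b\in J\}$ by the analysis of the first paragraph applied to the model $\widehat M$, whereas its lift $\tilde\gamma$ is a transverse geodesic of $M$ with $t(\tilde\gamma(\tau))=a\tau+b-c$, so the maximal domain of $\tilde\gamma$ is $\{\tau:a\tau+b-c\in I\}=\{\tau:a\tau+b\in J\}$, the very same interval. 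Hence $\mathrm{dev}$ has the geodesic-lifting property, so it is a covering map onto $\widehat M$; since $\widehat M$ is simply connected, $\mathrm{dev}$ is a diffeomorphism, hence an isometry.

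The step I expect to be the main obstacle---and the place where the hypothesis is used essentially rather than cosmetically---is the transverse case of the geodesic-lifting property, i.e.\ showing that a lifted transverse geodesic never becomes incomplete \emph{before} the one downstairs does. This is exactly what maximal completeness buys: it is strictly weaker than completeness, yet it still forces every incomplete transverse geodesic to sweep across all of $\mathcal{D}^{\perp}$, which is precisely what makes the developing image a full model rather than a proper sub-slab. A subsidiary, purely bookkeeping difficulty is checking that after the global identification the metric is literally in the model normal form---that the leafwise affine identifications coming from the local charts are mutually compatible across the values of $t$---which is routine given the global objects $t$, $\vec u=\partial_{s}$ and the leaf structure already in hand.
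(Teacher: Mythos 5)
Your outline (develop $M$ into a fixed model, show the image is a full model, then upgrade the local isometry to a covering via geodesic lifting) is a legitimate alternative to the paper's argument, and your endgame is sound: given a globally defined local isometry $\mathrm{dev}$, the leafwise covering argument and the domain comparison $D_{\widehat M}\subseteq\{\tau:a\tau+b\in J\}=D_M$ do make $\mathrm{dev}$ a covering onto a simply connected target, hence an isometry. But there is a genuine gap at the very first step: the existence of the developing map is asserted, not proved. The local structure theorem \eqref{eqn:local_structure_cf} gives charts into \emph{some} model near each point, and while the target data $(I,f,V,\langle\cdot,\cdot\rangle,A)$ can indeed be fixed globally ($f$ from the Ricci tensor, $V$ and $A$ from \eqref{eqn:Weyl_tidal}), assembling the charts into one map $M\to\widehat M_0$ is not an off-the-shelf $(G,X)$-structure argument: the model $\widehat M_0$ is in general neither homogeneous, nor complete, nor analytic, so the transition maps are elements of a \emph{pseudogroup} of local isometries, and you must prove that germs of charts continue along all paths without obstruction (equivalently, that local isometries between open subsets of $\widehat M_0$ extend as needed). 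That continuation statement is essentially the whole difficulty of the theorem, and you have located the "main obstacle" in the wrong place — the transverse geodesic-lifting step you worry about is actually fine (maximal completeness of $M$ gives $D_M=\{\tau:a\tau+b-c\in I\}$, and $D_{\widehat M}$ is trivially contained in it since $t(\widehat M)=J$; no maximal completeness of the model is needed).

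The paper avoids the developing map entirely: it uses maximal completeness once, up front, to produce a maximal null geodesic $x\colon I\to\widetilde M$ parametrized by $t$ and crossing every leaf, and then writes down the candidate isometry explicitly as $F(t,s,v)=\exp_{x(t)}\bigl(v(t)+s{\bf w}_{x(t)}/2\bigr)$, where $v\mapsto v(t)$ identifies $V$ with $\Pi_t^\perp$ for the parallel plane field $\Pi_t=\R\dot x(t)\oplus\widetilde{\mathcal D}_{x(t)}$. The identity $F^*\tilde{\mathtt g}=\widehat{\mathtt g}$ is supplied by \cite[Lemma 5.1]{local_structure} (this is the local computation your developing map would have to encode), and injectivity/surjectivity then follow from simple connectivity and flatness of the leaves together with maximal completeness — morally the same as your leafwise covering step. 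If you want to salvage your route, you would need to either prove the chart-continuation property for these incomplete, inhomogeneous models, or replace the abstract developing map by this explicit exponential construction, at which point you have reproduced the paper's proof.
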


The next result is a trivial consequence of Theorems \ref{teo:generic_Dperp} and \ref{teoA:uc_model}.

\begin{corA}\label{teoA:uc_cpct_model}
  The universal covering of any generic compact rank-one ECS manifold is isometric to a model ECS manifold.
\end{corA}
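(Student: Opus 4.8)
The plan is to deduce the statement directly from Theorems~\ref{teo:generic_Dperp} and~\ref{teoA:uc_model}, exactly as the sentence preceding it announces, so the work is purely organizational. Let $(M,\g)$ be a generic compact rank-one ECS manifold, and let $(\widetilde M,\tg)$ denote its universal covering equipped with the pulled-back metric. The first step I would carry out is the bookkeeping remark that $(\widetilde M,\tg)$ is again a rank-one ECS manifold: parallelism, nonvanishing and failure of conformal flatness of the Weyl tensor all pass to pseudo-Riemannian coverings; local symmetry is a purely local property, so it still fails on $\widetilde M$; and Olszak's distinguished null parallel distribution $\mathcal{D}$, being canonically attached to the metric, pulls back to the corresponding distribution of $(\widetilde M,\tg)$, whence the rank is still one.

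The second step is to invoke Theorem~\ref{teo:generic_Dperp}, which asserts that $(M,\g)$ is maximally complete. By the very definition of maximal completeness, this is a condition on the universal covering of $(M,\g)$, that is, on $(\widetilde M,\tg)$ itself; and since $\widetilde M$ is simply connected, it is its own universal covering, so $(\widetilde M,\tg)$ is a simply connected, maximally complete rank-one ECS manifold. The third and final step is to apply Theorem~\ref{teoA:uc_model} to $(\widetilde M,\tg)$, which produces an isometry onto a model ECS manifold -- precisely the assertion of the corollary.

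I do not anticipate any genuine obstacle here: all the substance has been absorbed into Theorems~\ref{teo:generic_Dperp} and~\ref{teoA:uc_model}. The only places deserving (routine) attention are the first step -- checking that the rank-one ECS structure both descends to coverings and is detected there through the canonical distribution $\mathcal{D}$ -- and the observation that the phrasing of \emph{maximally complete} is set up so that the property transfers verbatim from $(M,\g)$ to its universal covering.
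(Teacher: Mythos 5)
Your proposal is correct and matches the paper's intent exactly: the paper explicitly presents this corollary as a trivial consequence of Theorems~\ref{teo:generic_Dperp} and~\ref{teoA:uc_model}, with the lifting of the rank-one ECS structure to $(\widetilde M,\tg)$ already set up in Section~\ref{sec:comp_Dperp} and the definition of maximal completeness phrased so as to be a statement about the universal covering. Nothing is missing.
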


For the Lorentzian signature, Schliebner \cite{schliebner} proved this last conclusion without assuming genericity. From Corollary \ref{teoA:uc_cpct_model}, we obtain the following strengthened version of \cite[Theorem C]{DT_3}, which refers to the dichotomy \eqref{eqn:dichotomy}:

\begin{teoA}\label{teoA:gen_cpt_translational}
  Every generic compact rank-one ECS manifold is translational, as well as geodesically complete, and it cannot be locally homogeneous.
\end{teoA}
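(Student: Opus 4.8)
The plan is to derive all three assertions from Corollary \ref{teoA:uc_cpct_model}. Let $(M,\g)$ be a generic compact rank-one ECS manifold and let $(\widetilde M,\tg)$ be its universal covering. By Corollary \ref{teoA:uc_cpct_model}, $(\widetilde M,\tg)$ is isometric to a model ECS manifold; since model ECS manifolds are geodesically complete (Section \ref{sec:model}) and since a pseudo-Riemannian manifold is geodesically complete exactly when its universal covering is, $(M,\g)$ is geodesically complete. This settles one of the three claims.

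For the dichotomy \eqref{eqn:dichotomy}, let $\Gamma\cong\pi_1(M)$ act on $\widetilde M$ as the deck group. Each $\gamma\in\Gamma$, being an isometry, preserves the canonically defined parallel null line field $\widetilde{\mathcal D}$ and hence rescales a global parallel section of it by some $c_\gamma\in\R\setminus\{0\}$; the map $\gamma\mapsto c_\gamma$ is a homomorphism $\Gamma\to\R^\times$ whose image is precisely the holonomy group $H$ of the flat connection induced on $\mathcal D$. Suppose $H$ is infinite. Then $|c_\gamma|\neq 1$ for some $\gamma$, so $\gamma$ acts on $\widetilde{\mathcal D}$ --- and, compatibly, on the real line parametrizing the leaves of $\widetilde{\mathcal D}^\perp\!$ --- by a genuine dilation rather than a translation; thus $\widetilde M$ is a dilational model ECS manifold in the sense of \cite{DT_3}, and $(M,\g)$, one of its compact quotients, is dilational as in \eqref{eqn:dichotomy}. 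By \cite[Theorem A]{DT_3}, which applies because $(M,\g)$ is generic, compact and of rank one, $(M,\g)$ would then be locally homogeneous.

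It remains to show that $(M,\g)$ cannot be locally homogeneous; granting this, $H$ is finite, and, being a finite subgroup of $\R^\times$, it equals $\{1\}$ or $\{\pm 1\}\cong\Z_2$, so $(M,\g)$ is translational. If $(M,\g)$ were locally homogeneous, then so would be $(\widetilde M,\tg)$, which, being complete and simply connected, would be homogeneous --- in particular a model ECS manifold would admit a transitive isometry group. This is excluded by the explicit local form of model ECS manifolds (Section \ref{sec:model}): under the genericity assumption on the Weyl tensor, the matrix-valued function occurring in that normal form varies in a way incompatible with a transitive action of isometries, so the Killing algebra fails to act transitively. This contradiction finishes the argument, and it simultaneously yields the remaining claim that $(M,\g)$ is not locally homogeneous.

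The crux of the scheme is this last point: converting the genericity hypothesis on the Weyl tensor into the statement that a generic model ECS manifold admits no transitive group of isometries (equivalently, that the locally homogeneous rank-one ECS manifolds are all non-generic). Two smaller checks are that geodesic completeness of model ECS manifolds is genuinely part of their description, and that the two incarnations of the translational/dilational alternative --- through the holonomy of the flat connection on $\mathcal D$ and through the action of $\Gamma$ on the leaf space of $\widetilde{\mathcal D}^\perp\!$ --- coincide and match the hypotheses under which \cite[Theorem A]{DT_3} is proved.
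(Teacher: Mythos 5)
Your proposal has two genuine gaps, one of which is fatal to the whole scheme.

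First, the smaller one: model ECS manifolds are \emph{not} in general geodesically complete. The model \eqref{eq:ECS_model} lives on $I\times\R\times V$ for an arbitrary open interval $I$, and along any geodesic $t$ is an affine function of the affine parameter, so whenever $I\neq\R$ every geodesic transverse to $\widehat{\mathcal D}^\perp$ is incomplete. Indeed the standard homogeneous models of Section \ref{sec:LH_details} have $I=(0,\infty)$ and are incomplete. In the paper the logical order is the reverse of yours: one first proves translationality, which forces $q=1$ for all deck transformations, which in turn forces $I=\R$ (Remark \ref{rem:conclusions_E}), and only then does completeness (and non-local-homogeneity) follow, via \cite[formula (3.1)]{DT_1}. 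You cannot read completeness off from Corollary \ref{teoA:uc_cpct_model} alone.

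Second, and decisively: your claimed resolution of what you yourself call ``the crux'' --- that genericity of the Weyl tensor is incompatible with a transitive isometry group of the model --- is false. Section \ref{sec:LH_details} exhibits \emph{generic homogeneous} models: for $f(t)=(c^2-1/4)/t^2$ on $I=(0,\infty)$ the model is homogeneous regardless of $A$, and $A$ can be chosen generic. The entire content of Part II of the paper (the transitive-commutation property, the spectral analysis of $\sigma_q$ and of the conjugation action of $\Gamma$ on the lattice $\Sigma=\Gamma\cap\mathrm H$, culminating in the contradiction with Lemma \ref{lem:free_principal}) exists precisely because generic locally homogeneous models do admit transitive isometry groups, and what must be excluded is not homogeneity but the existence of a cocompact, freely and properly discontinuously acting discrete subgroup $\Gamma$ of that isometry group. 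Your argument replaces this with an assertion that happens to be the negation of a fact the paper proves, so the proposal does not close.
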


Let us point out that Theorem \ref{teoA:gen_cpt_translational} does not replace \cite[Theorem C]{DT_3}, but rather relies on it, since the latter is needed to prove the former. As we point out in Section \ref{sec:4dim_goodbye}, Corollary \ref{teoA:uc_cpct_model} combined with \cite[Theorem 8.1]{AGAG10} trivially leads to:

% and denoting by $\Sigma$ the kernel of the holonomy representation of the natural flat connection induced on $\mathcal{D}$ -- a finitely generated free Abelian group -- we are able to obtain the next result:

\begin{corA}\label{cor:dim4_goodbye}
Four-dimensional rank-one ECS manifolds are noncompact.  
\end{corA}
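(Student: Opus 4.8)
The plan is to argue by contradiction, using Corollary \ref{teoA:uc_cpct_model} to reduce a hypothetical compact example to a configuration that \cite[Theorem 8.1]{AGAG10} already rules out in dimension four.

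Suppose $(M,\g)$ were a compact four-dimensional rank-one ECS manifold. The first step is to invoke the fact, recorded at the end of Section \ref{sec:comp_Dperp}, that in dimension four the genericity assumption on the Weyl tensor holds automatically; hence $(M,\g)$ is a \emph{generic} compact rank-one ECS manifold. Corollary \ref{teoA:uc_cpct_model} then applies and yields that the universal covering $(\widetilde M,\widetilde\g)$ is isometric to a model ECS manifold, which is of course still four-dimensional.

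Next, since $\widetilde M\to M$ is a covering with $M$ compact, the deck-transformation group $\Gamma\cong\pi_1(M)$ acts on the model ECS manifold $(\widetilde M,\widetilde\g)$ by isometries, freely, properly discontinuously, and with compact quotient $M$. This is exactly the situation excluded by \cite[Theorem 8.1]{AGAG10}: a four-dimensional model rank-one ECS manifold covers no compact manifold (equivalently, the compact-quotient constructions of model ECS manifolds all require dimension $n\geq 5$). That contradiction proves the corollary.

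The one point meriting care --- and essentially the only possible obstacle --- is the bookkeeping: making sure the triple $(\widetilde M,\widetilde\g,\Gamma)$ produced above meets the precise hypotheses of \cite[Theorem 8.1]{AGAG10}, and that the automatic genericity in dimension four is not quietly introducing an extra assumption through Corollary \ref{teoA:uc_cpct_model}. Once those are checked, the statement follows by a direct concatenation of results already in hand, exactly as the word ``trivially'' in the introduction promises.
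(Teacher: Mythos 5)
Your overall strategy (contradiction, automatic genericity in dimension four, Corollary \ref{teoA:uc_cpct_model}, then \cite[Theorem 8.1]{AGAG10}) is the paper's strategy, but the step you defer as ``bookkeeping'' is in fact the substance of the argument, and as written your proof has a genuine gap there. \cite[Theorem 8.1]{AGAG10} is not the blanket statement that a four-dimensional model ECS manifold covers no compact manifold; as the paper uses it, it says that no subgroup of the \emph{specific} group ${\rm G}=\{(1,p,{\rm Id},r,u)\in{\rm Iso}(\widehat M,\hg): p\in\mathrm{P},\ (r,u)\in{\rm H}\}$, with $\mathrm{P}$ a cyclic subgroup of $\R$, can act freely and properly discontinuously with compact quotient. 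Your deck group $\Gamma$ is a priori just some subgroup of the full isometry group $\mathrm{S}\ltimes\mathrm{H}$, so the theorem does not apply to it directly.

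Closing the gap requires three reductions that you omit. First, one must know that $q=1$ for every $\gamma\in\Gamma$, i.e.\ that $(M,\g)$ is translational; this comes from Theorem \ref{teoA:gen_cpt_translational} (which is far from free --- it rests on all of Part II and on \cite{DT_3}) together with Remark \ref{rem:conclusions_E}, which also gives $I=\R$. Second, after passing to a finite-index subgroup via \eqref{eqn:passing_to} (legitimate because genericity of $A$ makes the relevant isometry group of $(V,\langle\cdot,\cdot\rangle)$ finite), one arranges $C={\rm Id}$, so that every $\gamma$ has the form $(1,p,{\rm Id},r,u)$. Third, one must show that the image $\mathrm{P}$ of $\gamma\mapsto p$ is cyclic: if $\mathrm{P}$ were dense, the invariance condition $f(t)=f(t+p)$ from \eqref{eqn:group_Q} would force $f$ to be constant, contradicting \eqref{eqn:ECS_data}. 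Only after these steps is $\Gamma$ a subgroup of a group ${\rm G}$ of the form covered by \cite[Theorem 8.1]{AGAG10}. So the corollary is not a direct concatenation of Corollary \ref{teoA:uc_cpct_model} with the cited theorem; it also consumes Theorem \ref{teoA:gen_cpt_translational} and a nontrivial structure analysis of $\Gamma$.
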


In other words, if four-dimensional compact ECS manifolds do exist, they must necessarily be of rank two.

\section*{How the paper is organized}

Unless stated otherwise, all manifolds, bundles, connections, mappings, and tensor fields are assumed to be smooth. The text is divided into two parts.

\medskip

\noindent \textbf{Part I.} After Sections \ref{sec:prelim-conn}--\ref{sec:prelim-spectra} dealing with preliminaries, in Section \ref{sec:generic} we elaborate on the meaning of genericity. Sections \ref{sec:comp_Dperp} and \ref{sec:model} lay the groundwork for proving Theorems \ref{teoA:generic_mc} and \ref{teoA:uc_model}, summarizing what is already known about the structure of the universal coverings of compact rank-one ECS manifolds, and describing our model ECS manifolds. In Section \ref{sec:main_proofs} we prove Theorems \ref{teoA:generic_mc}--\ref{teoA:uc_model}, adapting to our situation the proofs of some weaker results from \cite{JGP08} (namely, Lemma 7.3 and Theorem 7.1 therein). Further details are provided in Appendices \ref{app:technical} and \ref{app:model}.

\medskip

\noindent \textbf{Part II.} Section \ref{sec:TCP} introduces the \emph{transitive-com\-mu\-ta\-ti\-on property} for a group-sub\-group pair, crucial for understanding the structure of the isometry group of a locally homogeneous model ECS manifold. The focus on the locally homogeneous case is justified by \cite[Theorem C]{DT_3}, where we prove that a generic dilational compact rank-one ECS manifold must be locally homogeneous. Section \ref{sec:LH_details} presents what we call \emph{standard homogeneous rank-one ECS model manifolds}, used in Section \ref{sec:proof_E} to prove Theorem \ref{teoA:gen_cpt_translational}.

\section{Completeness of connections}\label{sec:prelim-conn} In this section, we fix a manifold $L$ equipped with a connection $\nabla$.

\begin{lem}\label{lem:IVP_affine}
 If $\vec{X}$ and $\vec{Z}$ are vector fields along a curve defined on an open interval $J\subseteq\R$ of the
variable $s$ containing $0$, and
  \begin{equation}\label{eqn:IVP_affine}
   \nabla_{\!s}\vec{Z} = \nabla_{\!s}\nabla_{\!s} \vec{X} = 0,\quad \vec{X}(0) = \vec{Z}(0),\quad [\nabla_{\!s}\vec{X}](0) = -\vec{Z}(0),
  \end{equation}
then $\vec{X}(s) = (1-s)\vec{Z}(s)$ for all $s\in J$.
\end{lem}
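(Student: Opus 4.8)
The plan is to reduce the statement to the uniqueness of solutions of a homogeneous second-order linear ODE along the curve. I would introduce the auxiliary vector field $\vec{W} = \vec{X} - (1-s)\vec{Z}$ along the same curve on $J$, and first compute its covariant derivative: since $\nabla_{\!s}\vec{Z} = 0$, the Leibniz rule applied to the product of the scalar function $s\mapsto 1-s$ with $\vec{Z}$ gives $\nabla_{\!s}\big[(1-s)\vec{Z}\big] = -\vec{Z} + (1-s)\nabla_{\!s}\vec{Z} = -\vec{Z}$, so that $\nabla_{\!s}\vec{W} = \nabla_{\!s}\vec{X} + \vec{Z}$. Differentiating once more and using both $\nabla_{\!s}\nabla_{\!s}\vec{X} = 0$ and $\nabla_{\!s}\vec{Z} = 0$ yields $\nabla_{\!s}\nabla_{\!s}\vec{W} = 0$; in other words, $\nabla_{\!s}\vec{W}$ is parallel along the curve.

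Next I would read off the initial data from \eqref{eqn:IVP_affine}. At $s = 0$ one has $\vec{W}(0) = \vec{X}(0) - \vec{Z}(0) = 0$ and $[\nabla_{\!s}\vec{W}](0) = [\nabla_{\!s}\vec{X}](0) + \vec{Z}(0) = -\vec{Z}(0) + \vec{Z}(0) = 0$. Since $\nabla_{\!s}\vec{W}$ is parallel along the curve and vanishes at $s = 0$, it vanishes identically on $J$ (here it matters that $J$ is a single interval, so that vanishing at one point really does propagate); hence $\vec{W}$ is itself parallel along the curve, and as $\vec{W}(0) = 0$ we get $\vec{W} \equiv 0$ on $J$. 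Unwinding the definition of $\vec{W}$ gives $\vec{X}(s) = (1-s)\vec{Z}(s)$ for all $s\in J$, as claimed.

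There is no real obstacle here: the argument is a two-line computation, and the only points needing a word of care are the Leibniz rule noted above and the fact that a parallel field along a curve over an interval is determined by its value at any one point. One could equivalently avoid naming $\vec{W}$: integrating $\nabla_{\!s}\nabla_{\!s}\vec{X} = 0$ shows that $\nabla_{\!s}\vec{X}$ is the parallel field with value $-\vec{Z}(0)$ at $s=0$, i.e. $\nabla_{\!s}\vec{X} = -\vec{Z}$, and then $(1-s)\vec{Z}$ has the same covariant derivative $-\vec{Z}$ and the same value at $s=0$ as $\vec{X}$, so the two curves of vectors coincide.
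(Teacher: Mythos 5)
Your argument is correct and is essentially the paper's own: the paper simply notes that $s\mapsto(1-s)\vec{Z}_s$ satisfies the initial value problem \eqref{eqn:IVP_affine} and invokes uniqueness, which is exactly the content of your computation with $\vec{W}$ (and of your closing remark). No issues.
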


In fact, $s\mapsto \vec{X}_s= (1-s)\vec{Z}_s$ satisfies \eqref{eqn:IVP_affine}.

The next Lemma generalizes \cite[Lemma 1.4]{JGP08} and is used in Section \ref{sec:main_proofs} to prove Theorem \ref{teo:generic_Dperp}. In its proof (and later in Appendix \ref{app:technical}) we adopt the notational
convention of \cite[end of Sect. 1]{JGP08}: given a \emph{variation of curves}
in a manifold $M$, that is, a ${\rm C}^\infty$
mapping $(t,s)\mapsto x(s,t)$ from an open set in $\R^2$ into $M$, and a connection on $M$, we denote by $x_s,x_t$ (or, $x_{ss},x_{st},x_{tss}$, etc.) its
partial (or, partial covariant) derivatives of orders $1,2,3$ etc., all
of which are vector fields along the variation, meaning, as usual, sections of
the corresponding pull\-back of $TM$. As the connections involved
are flat and tor\-sion-free,
\begin{equation}\label{eqn:derivatives_symmetric}
  \parbox{.74\textwidth}{all such derivatives depend symmetrically on the subscripts.}
\end{equation}

\begin{lem}\label{lem:abstract_completeness}
  Let $\mathcal{P}$ be a distribution on $L$, and assume that:
  \begin{enumerate}[\normalfont(a)]
  \item $\nabla$ is flat and torsionfree,
  \item $\mathcal{P}$ is trivialized by a vector space $\mathcal{X}$ of complete parallel vector fields,
  \item there is a vector space $\mathcal{Y}$ of complete vector fields on $L$ which is isomorphically mapped via the quotient projection onto a vector space of sections trivializing the quotient bundle $TL/\mathcal{P}$ over $L$, and parallel relative to the connection induced on $TL/\mathcal{P}$.
  \end{enumerate}
   Then $\nabla$ is complete.
\end{lem}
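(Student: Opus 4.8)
\emph{Proof strategy.} Completeness of $\nabla$ means that every maximal geodesic has domain $\R$, so I would fix a maximal geodesic $\gamma\colon J\to L$ with $0\in J$ and, by the symmetry $s\mapsto-s$, reduce to excluding $b:=\sup J<\infty$. Fix a basis $\vec{X}_1,\dots,\vec{X}_k$ of $\mathcal X$ ($k=\operatorname{rank}\mathcal P$) and a basis $\vec{Y}_1,\dots,\vec{Y}_m$ of $\mathcal Y$ ($m=\operatorname{rank}(TL/\mathcal P)$); by (b)--(c) the $n=k+m$ fields $\vec{X}_i,\vec{Y}_\alpha$ form a frame of $TL$. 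Since $\nabla\vec{X}_i=0$ and, by (c), $\nabla\vec{Y}_\alpha$ is $\mathcal P$-valued, every covariant derivative of a frame field along a frame field lies in $\mathcal P$; writing $\gamma'(s)=\sum_i v^i(s)\vec{X}_i(\gamma(s))+\sum_\alpha g^\alpha(s)\vec{Y}_\alpha(\gamma(s))$ and splitting $\nabla_s\gamma'=0$ accordingly, the transverse part gives $\dot g^\alpha=0$, hence $g^\alpha\equiv c^\alpha:=g^\alpha(0)$ (equivalently, $\overline{\gamma'}$ is parallel for the induced connection on $TL/\mathcal P$ and the $\overline{\vec{Y}_\alpha}$ are parallel), while the $\mathcal P$-part is a \emph{linear} system $\dot v=-P(\gamma(s))v-Q(\gamma(s))$ whose coefficients come from the $c^\alpha$ and the connection coefficients of the frame. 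Locally this is transparent: in coordinates $(x^1,\dots,x^k,u^1,\dots,u^m)$ with $\vec{X}_i=\partial_{x^i}$ and $\{u^\alpha=\mathrm{const}\}$ the leaves of $\mathcal P$, torsion-freeness and $\nabla\vec{X}_i=0$ annihilate all Christoffel symbols but the $\Gamma^{x^l}_{\alpha\beta}$, flatness (a) makes these depend on $u$ alone, and parallelism of $\overline{\vec{Y}_\alpha}$ forces $\Gamma^{u^\delta}_{\alpha\beta}=0$; thus the geodesic equations become $\ddot u^\delta=0$ and $\ddot x^l=-\Gamma^{x^l}_{\alpha\beta}(u)\,\dot u^\alpha\dot u^\beta$, so $u$ is affine and $x$ a double primitive of a smooth function --- defined for all $s$. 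The whole issue is to globalize this.

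To that end I would strip off the transverse motion. Set $\vec{Y}:=\sum_\alpha c^\alpha\vec{Y}_\alpha\in\mathcal Y$, complete with flow $\phi_t$ (defined for all $t$), which preserves $\mathcal P$ since $[\vec{Y},\vec{X}_i]=-\nabla_{\vec{X}_i}\vec{Y}\in\mathcal P$; and note that the $\vec{X}_i$ commute ($\nabla$ torsion-free, $\nabla\vec{X}_i=0$) and, being complete, generate an action $\Phi\colon\R^k\times L\to L$ with the $\vec{X}_i$ as fundamental fields. With $\psi(s):=\phi_{-s}(\gamma(s))$ and $d\phi_{-s}(\vec{Y})=\vec{Y}\circ\phi_{-s}$ one finds $\psi'(s)=(d\phi_{-s})_{\gamma(s)}\bigl(\gamma'(s)-\vec{Y}(\gamma(s))\bigr)=(d\phi_{-s})_{\gamma(s)}\bigl(\sum_i v^i(s)\vec{X}_i(\gamma(s))\bigr)\in\mathcal P_{\psi(s)}$, so $\psi'(s)=\sum_i w^i(s)\vec{X}_i(\psi(s))$ and, the $\vec{X}_i$ being commuting and complete, $\psi(s)=\Phi\bigl(\int_0^sw,\psi(0)\bigr)$, whence $\gamma(s)=\phi_s\!\bigl(\Phi(\int_0^sw,\psi(0))\bigr)$. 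Therefore, if $b<\infty$ and $w$ is bounded on $[0,b)$, then $\psi(s)$ and $\gamma(s)$ converge as $s\to b$, $\gamma$ stays in a compact set there, hence $v$ (recovered from $w$ via $\sum_i v^i(s)\vec{X}_i(\gamma(s))=(d\phi_s)_{\psi(s)}\psi'(s)$, and Lipschitz by its own linear equation) converges too, so $\gamma'(s)=\sum_iv^i\vec{X}_i(\gamma)+\sum_\alpha c^\alpha\vec{Y}_\alpha(\gamma)$ converges, and the geodesic with that limiting initial condition extends $\gamma$ beyond $b$, contradicting maximality; the case $s\mapsto-s$ is symmetric, so $\nabla$ is complete.

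The main obstacle is precisely the uniform bound on $w$ --- equivalently on $v$ --- over the finite interval $[0,b)$. A priori $v$ obeys only the linear system above, whose coefficients need not be bounded along $\gamma$, so the estimate has to be extracted \emph{together with} the very non-escape of $\gamma$ that it is meant to yield --- the circularity that the local coordinate picture of the first paragraph breaks, but only locally. To break it globally I would use the variation-of-curves technique of \cite[end of Sect.\ 1]{JGP08}: each $\vec{X}_i$ is affine (being parallel for the flat torsion-free $\nabla$), so one has genuine geodesic variations to work with; differentiating them, exploiting the symmetry \eqref{eqn:derivatives_symmetric} of iterated covariant derivatives, and pinning down the resulting affine (Jacobi) fields along $\gamma$ by means of Lemma \ref{lem:IVP_affine}, one obtains the required a priori bound on every bounded parameter interval. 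This estimate is the only point at which completeness of \emph{all} the frame fields --- not merely their existence --- is essential, and I expect it to be the delicate part of the argument.
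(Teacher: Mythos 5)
There is a genuine gap, and you have located it yourself: the uniform bound on $w$ (equivalently on $v$) over $[0,b)$ is never established, and your closing paragraph only gestures at how it might be obtained. The difficulty is structural, not technical: your strategy is to take a maximal geodesic and extend it past a finite endpoint, which requires knowing that $\gamma([0,b))$ does not escape to infinity --- but that non-escape is exactly what the linear system for $v$ cannot deliver, since its coefficients are evaluated along the very curve whose behaviour is unknown. The sentence ``one obtains the required a priori bound on every bounded parameter interval'' is the entire content of the lemma, and it is asserted rather than proved. The local coordinate computation in your first paragraph is correct but, as you note, does not globalize by itself.

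The paper's proof avoids extension and a priori bounds altogether by \emph{constructing} a complete geodesic with arbitrary prescribed initial velocity. Given $z\in L$ and $\mathbf{v}\in\mathcal{Y}$ with integral curve $y\colon\R\to L$, one uses that $\nabla_{\mathbf{v}}\mathbf{v}$ is $\mathcal{P}$-valued to choose $\zeta,\eta\colon\R\to\mathcal{X}$ with $[\zeta(t)]_{y(t)}=\ddot y(t)$ and $\ddot\eta=-\zeta$, and forms the variation $x(t,s)={\rm e}^{s\eta(t)}y(t)$ using the (complete) flows of the parallel fields $\eta(t)$. Then $x_{ss}=0$, and Lemma \ref{lem:IVP_affine} applied to $\vec{Z}(s)=[\zeta(t)]_{x(t,s)}$ and $\vec{X}(s)=x_{tt}(t,s)$ (the hypotheses \eqref{eqn:IVP_affine} following from \eqref{eqn:derivatives_symmetric}) gives $x_{tt}(t,s)=(1-s)[\zeta(t)]_{x(t,s)}$. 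Hence $t\mapsto x(t,1)$ is a geodesic defined on all of $\R$, with initial velocity $\mathbf{v}_z+\dot\eta(0)$ when $\eta(0)=0$; as these velocities exhaust $T_zL$ (geodesics starting tangent to $\mathcal{P}$ being integral curves of complete parallel fields), every geodesic is complete by uniqueness. So the variation technique you invoke is indeed the right tool, but it must be deployed to build the geodesic globally from the outset, not to estimate a solution of the geodesic ODE after the fact; as written, your argument does not close.
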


\begin{proof}
  The evaluation $\mathcal{X}\to \mathcal{P}_z$ at each $z\in L$ is an isomorphism, and so every $\nabla$-geodesic of $L$ starting tangent to $\mathcal{P}$ is complete, such a geodesic being an integral curve of a complete parallel vector field on $L$.

Given $z\in L$ and ${\bf v} \in \mathcal{Y}$, let $y\colon \R\to L$ be the integral curve of ${\bf v}$ with $y(0)=z$. As $\nabla_{\!{\bf v}}{\bf v}$ is always tangent to $\mathcal{P}$, we may choose $\zeta,\eta\colon \R \to \mathcal{X}$ with
\begin{equation}\label{eqn:zeta_and_eta}
  [\zeta(t)]_{y(t)} = [\nabla_{\!{\bf v}}{\bf v}]_{y(t)}=\ddot{y}(t),\quad \ddot{\eta} = -\zeta.
\end{equation}
Each $\eta(t)$ is complete, and the variation $\R^2\ni (t,s) \mapsto x(t,s) = {\rm e}^{s\eta(t)}y(t)$ has
  \begin{equation}\label{eqn:conditions_var}
   x(t,0) = y(t),\quad x_s(t,0) = \left[\eta(t)\right]_{y(t)},\quad x_{ss}(t,s)= 0.
  \end{equation}
Hence $x_{tt}(t,s)=(1-s)[\zeta(t)]_{x(t,s)}$ for all $(t,s)\in\R^2$, as
one sees applying Lemma \ref{lem:IVP_affine} to $\vec{Z}(s)=[\zeta(t)]_{x(t,s)}$ and $\vec{X}(s)=x_{tt}(t,s)$, with fixed $t$, the equalities \eqref{eqn:IVP_affine} being immediate from \eqref{eqn:derivatives_symmetric}--\eqref{eqn:conditions_var}. (Note that $\nabla[\zeta(t)]=0$ and $x_{ttss}=x_{sstt}=0$.) In particular, $t\mapsto x(t,1) = {\rm e}^{\eta(t)}y(t)$ is a complete geodesic whose initial velocity is, when $\eta(0) = 0$, equal to ${\bf v}_z + \dot{\eta}(0)$. However, every vector in $T_zM$ is of this form for suitable ${\bf v}$ and $\eta$, as the values ${\bf v}_z$ realize all values at $z$ in the complementary subbundle to $\mathcal{P}$ spanned by $\mathcal{Y}$, while the values $\dot{\eta}(0)$ realize all elements of $\mathcal{P}_z$.
\end{proof}

\begin{lem}\label{lem:complete_surjective}
  If $\nabla$ is complete and $f\colon L\to\R$ has \hbox{$\nabla {\rm d}f = 0$}, then $f$ is surjective.
\end{lem}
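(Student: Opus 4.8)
The plan is to argue that the condition $\nabla\,{\rm d}f = 0$ forces $f$ to be an affine function along every geodesic of $L$, and then to exploit completeness of $\nabla$ to run each such affine function over all of $\R$. Concretely, fix $z\in L$ and a tangent vector $\vec{v}\in T_zL$, and let $\gamma\colon\R\to L$ be the maximal $\nabla$-geodesic with $\gamma(0)=z$ and $\dot\gamma(0)=\vec{v}$; by completeness this is defined on all of $\R$. First I would compute $\frac{d^2}{ds^2}\bigl(f\circ\gamma\bigr)(s)$. Since $\frac{d}{ds}(f\circ\gamma) = {\rm d}f(\dot\gamma)$, differentiating once more and using $\nabla_{\!s}\dot\gamma = 0$ together with $\nabla\,{\rm d}f = 0$ gives $\frac{d^2}{ds^2}(f\circ\gamma)(s) = (\nabla\,{\rm d}f)(\dot\gamma,\dot\gamma) = 0$. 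Hence $f\circ\gamma$ is an affine function of $s$, namely $(f\circ\gamma)(s) = f(z) + s\,{\rm d}f_z(\vec{v})$.

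The remaining step is to choose $\vec v$ so that the slope ${\rm d}f_z(\vec v)$ is nonzero. If ${\rm d}f$ vanishes identically on $L$, then $f$ is locally constant; but a locally constant function on a manifold that is not assumed connected still need not be surjective, so strictly speaking the statement presupposes that we are only claiming surjectivity under the tacit standing assumption that $L$ is connected and $f$ nonconstant — or, more precisely, the intended reading is that $\nabla\,{\rm d}f = 0$ with $f$ nonconstant yields surjectivity. Granting that ${\rm d}f_z\neq 0$ for some $z$, pick $\vec v\in T_zL$ with ${\rm d}f_z(\vec v) = 1$. Then along the complete geodesic $\gamma$ just constructed we get $(f\circ\gamma)(s) = f(z) + s$ for all $s\in\R$, so the image of $f\circ\gamma$ is the whole real line, and a fortiori $f$ is surjective.

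The only genuine subtlety — and the step I would expect to require the most care to phrase correctly — is the degenerate case ${\rm d}f\equiv 0$: one must either fold into the hypotheses that $f$ is nonconstant (so that $L$ being connected forces ${\rm d}f$ to be somewhere nonzero) or observe that in the paper's intended applications $f$ is always a genuine "coordinate-type" function with nonvanishing differential, so that this case does not arise. Apart from that, the argument is the standard two-line computation showing that a function with parallel Hessian restricts to an affine function along geodesics, combined with geodesic completeness to stretch that affine function over all of $\R$; no further input is needed.
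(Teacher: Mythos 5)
Your argument is exactly the paper's: the proof given there is the single line ``along a maximal geodesic, $f$ is an affine function of its parameter,'' i.e.\ precisely your computation that $\nabla\,{\rm d}f=0$ together with $\nabla_{\!s}\dot\gamma=0$ forces $(f\circ\gamma)''=0$, followed by completeness to stretch that affine function over all of $\R$. Your caveat about the degenerate case ${\rm d}f\equiv 0$ is fair --- the lemma as literally stated fails for constant $f$ --- but in the paper's sole application, $f=t$ with ${\rm d}t=\tg({\bf w},\cdot)$ for the nowhere-vanishing parallel field ${\bf w}$ of \eqref{eqn:npg_D}, so the differential is nowhere zero and the issue does not arise.
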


Namely, along a maximal geodesic, $f$ is an affine function of its parameter.

\section{Properly discontinuous $\R^k$-subactions}\label{sec:prelim-pd}

Three well-known facts are phrased here as a remark for easy reference.

\begin{obs}\label{rem:comp_fibrations}
First, the composition of two fibrations (including covering projections) is clearly a fibration. Secondly, if a Lie group $G$ acts on a manifold $\widehat{M}$ with a subgroup $\Gamma$ acting on $\widehat{M}$ freely and properly discontinuously, then $\Gamma$ is a discrete subset of $G$. Finally, whenever a compact manifold is contractible, it consists of a single point. (Otherwise, being simply connected, and hence orientable, it would have a nontrivial top cohomology group.)
\end{obs}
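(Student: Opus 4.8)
The plan is to establish the three assertions one at a time, each by a routine argument.

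\emph{Composition of fibrations.} I would use the homotopy lifting property (HLP), which characterizes Hurewicz fibrations and is satisfied by covering projections. Given fibrations $p\colon E\to B$ and $q\colon B\to C$, a space $X$, a homotopy $H\colon X\times[0,1]\to C$, and a lift $h_0\colon X\to E$ of $H(\cdot,0)$, I first push $h_0$ down to $p\circ h_0\colon X\to B$, which lifts $H(\cdot,0)$ through $q$; applying the HLP of $q$ yields $\widetilde H\colon X\times[0,1]\to B$ lifting $H$ with $\widetilde H(\cdot,0)=p\circ h_0$; applying the HLP of $p$ to $\widetilde H$ with initial lift $h_0$ yields $\widehat H\colon X\times[0,1]\to E$; and $q\circ p\circ\widehat H=q\circ\widetilde H=H$, so $q\circ p$ is a fibration. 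The same composition law for locally trivial fibrations follows either from this or directly by composing local trivializations.

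\emph{Discreteness of $\Gamma$.} Fix $x\in\widehat M$. Since $\Gamma$ acts freely and properly discontinuously, there is an open set $U\ni x$ with $\gamma U\cap U=\emptyset$ for every $\gamma\in\Gamma\setminus\{1\}$. The orbit map $G\ni g\mapsto gx\in\widehat M$ is continuous, so $V=\{g\in G:gx\in U\}$ is an open neighborhood of $1$ in $G$, and any $\gamma\in\Gamma\cap V$ would satisfy $\gamma x\in\gamma U\cap U$, forcing $\gamma=1$. Thus $1$ is isolated in the subspace $\Gamma\subseteq G$; since left translations of $G$ are homeomorphisms carrying $\Gamma$ onto itself, every point of $\Gamma$ is isolated, i.e. $\Gamma$ is a discrete subset of $G$.

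\emph{Compact contractible manifolds.} Let $M$ be such a manifold (without boundary, per the paper's convention — with boundary, the closed disk would be a counterexample). Contractibility makes $M$ path-connected and simply connected; a simply connected manifold admits no nontrivial connected covering, so its orientation double covering is disconnected, which means $M$ is orientable. A nonempty compact connected orientable $n$-manifold without boundary satisfies $H_n(M;\Z)\cong\Z$, whereas a contractible space has $H_n(M;\Z)=0$ for all $n\geq1$; hence $n=0$, and a contractible $0$-manifold is a single point. None of these steps presents a real obstacle; the only items needing care are fixing the precise sense of \emph{properly discontinuous} in the second assertion (strong enough to produce the separating neighborhood $U$) and the convention that \emph{manifold} excludes boundary in the third, without which the statement fails.
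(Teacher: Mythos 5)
Your proposal is correct, and where the paper gives any argument at all (the parenthetical for the third fact: simply connected $\Rightarrow$ orientable $\Rightarrow$ nontrivial top (co)homology, contradicting contractibility unless the dimension is zero), your reasoning matches it; the first two facts are simply asserted as well known in the paper, and your HLP composition argument and the translation-of-an-isolated-identity argument are the standard proofs one would supply. Your caveats about the meaning of \emph{properly discontinuous} and the no-boundary convention are well taken and do not affect correctness.
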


\begin{lem}\label{lem:free_principal}
  If $\R^k$ acts freely on a contractible manifold $\widehat{M}$ and a subgroup $\Gamma$ of  $\R^k$ acts on $\widehat{M}$ properly discontinuously with a compact quotient $\widehat{M}/\Gamma$, then $k = \dim \widehat{M}$, the action of $\R^k$ on $\widehat{M}$ is simply transitive, and $\Gamma$ is a lattice in $\R^k$. Consequently, $\widehat{M}$ and $\widehat{M}/\Gamma$ are, respectively, an affine $k$-space and a $k$-dimensional torus.
\end{lem}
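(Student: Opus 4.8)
The plan is to exploit the fact that $\widehat M$ is contractible, hence in particular simply connected, and that $\Gamma$ acts freely and properly discontinuously with compact quotient, so $\Gamma = \pi_1(\widehat M/\Gamma)$ and $\widehat M \to \widehat M/\Gamma$ is the universal covering. First I would observe that, by Remark \ref{rem:comp_fibrations}, $\Gamma$ is a discrete subgroup of $\R^k$; being discrete in $\R^k$, it is a finitely generated free abelian group, say $\Gamma \cong \Z^m$ with $0 \le m \le k$, and it is a lattice in the linear subspace $V = \mathrm{span}_\R\,\Gamma$ of $\R^k$, with $\dim V = m$. The subgroup $V \subseteq \R^k$ still acts freely on $\widehat M$ (a restriction of a free action), and $V/\Gamma$ is an $m$-torus acting freely on $\widehat M$; since $\Gamma \backslash \widehat M$ is compact, the induced action of the torus $V/\Gamma$ on the compact manifold $\Gamma\backslash\widehat M$ is free, hence gives a principal $V/\Gamma$-bundle $\Gamma\backslash\widehat M \to (\Gamma\backslash\widehat M)/(V/\Gamma)$, whose total space is compact; thus the base $B := \widehat M / V$ is compact as well.

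Next I would pin down the orbit structure of the full action. Because $\R^k$ acts freely, each orbit is an injectively immersed copy of $\R^k$; because $\Gamma$ acts properly discontinuously, the $\Gamma$-orbits are discrete, which forces the $\R^k$-orbits, viewed downstairs, to descend to the $(V/\Gamma)$-orbits in $\Gamma\backslash\widehat M$ — so the orbit space $\widehat M/\R^k$ coincides with $B = \widehat M/V$, a compact manifold (the action of $\R^k$ is proper, since it factors through the proper action of the compact group $V/\Gamma$ on $\Gamma\backslash\widehat M$ after quotienting by $\Gamma$, which one can make precise via slices). A free proper action of $\R^k$ makes $\widehat M \to B$ a principal $\R^k$-bundle; since $\R^k$ is contractible, this bundle is trivial, so $\widehat M \cong B \times \R^k$ diffeomorphically. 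But $\widehat M$ is contractible, hence so is $B$; being a compact contractible manifold, $B$ is a single point by Remark \ref{rem:comp_fibrations}. Therefore $\widehat M \cong \R^k$ as a manifold and the $\R^k$-action is transitive; being also free, it is simply transitive, so $\dim\widehat M = k$.

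Finally, with the $\R^k$-action simply transitive, fixing a base point identifies $\widehat M$ with the affine $k$-space on which $\R^k$ acts by translations, and under this identification $\Gamma$ acts by translations through a discrete subgroup; properly discontinuous action with compact quotient forces $\mathrm{span}_\R\,\Gamma = \R^k$, i.e.\ $m = k$, so $\Gamma$ is a lattice in $\R^k$ and $\widehat M/\Gamma$ is a $k$-dimensional torus.

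I expect the main obstacle to be the rigorous verification that the $\R^k$-action is proper — equivalently, that its orbit space is Hausdorff and the quotient map is nice enough to apply the principal-bundle machinery. The subtlety is that properness of the $\Gamma$-action does not a priori give properness of the $\R^k$-action; the argument must genuinely use compactness of $\widehat M/\Gamma$ to prevent the $\R^k$-orbits from "spiraling" densely. Concretely, one shows that the image of $V$ in $\mathrm{Diff}(\Gamma\backslash\widehat M)$ is the compact torus $V/\Gamma$, whose action is automatically proper, and that $V$ and $\R^k$ have the same orbits on $\widehat M$ (any "extra" directions in $\R^k/V$ would, by freeness and discreteness of $\Gamma$, have to move points off every $\Gamma$-orbit arbitrarily far, contradicting compactness of the quotient); once $V = \R^k$ is established the rest is formal. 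An alternative, perhaps cleaner route avoids properness entirely: use that $\widehat M \to \widehat M/\Gamma$ is the universal cover, so $\R^k/\Gamma$ embeds in the (discrete-times-connected) structure coming from the deck-transformation/isometry picture, and run a dimension count via the long exact homotopy sequence of $V/\Gamma \hookrightarrow \Gamma\backslash\widehat M \to B$ together with contractibility of $\widehat M$; I would present whichever of these is shorter in the final writeup.
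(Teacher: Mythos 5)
Your opening steps (discreteness of $\Gamma$, the lattice in $V=\mathrm{span}_{\R}\Gamma$, the free action of the torus $V/\Gamma$ on $\widehat M/\Gamma$, and the resulting principal bundle over a compact base $B=\widehat M/V$) coincide with the paper's. The gap is in your main route: the assertion that the $\R^k$-orbits coincide with the $V$-orbits -- and hence that the $\R^k$-action is proper with orbit space $B$ -- is essentially the conclusion of the lemma (by freeness, equality of orbits forces $V=\R^k$), and the justification you offer, namely that ``extra'' directions in $\R^k/V$ would move points off every $\Gamma$-orbit arbitrarily far and contradict compactness of the quotient, is not an argument: nothing ties the $\R^k$-action to a metric on $\widehat M/\Gamma$, and compactness of the quotient by itself does not control how a one-parameter subgroup transverse to $V$ displaces $\Gamma$-orbits. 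You correctly flag properness as the main obstacle, but the fix you sketch for it is circular as stated.

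Your ``alternative, cleaner route'' is in fact the paper's proof, and it removes any need for properness of the $\R^k$-action. The paper composes the covering $\widehat M\to\widehat M/\Gamma$ with the torus bundle $\widehat M/\Gamma\to B$ into a single fibration $\widehat M\to B$ with fibre $V\cong\R^{\dim V}$ (Remark \ref{rem:comp_fibrations}); since total space and fibre are contractible, the homotopy long exact sequence kills every homotopy group of $B$, so $B$ is contractible, hence -- being compact and contractible -- a single point. Thus $\widehat M$ is already one $V$-orbit, giving $\dim\widehat M=\dim V\le k$, while freeness of the $\R^k$-action gives $k\le\dim\widehat M$; the squeeze yields $V=\R^k$, simple transitivity, and the lattice statement simultaneously. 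Note that running the long exact sequence on the torus bundle $V/\Gamma\hookrightarrow\widehat M/\Gamma\to B$ alone, as you propose, would additionally require checking that the fibre inclusion induces an isomorphism on $\pi_1$; composing with the universal covering first, as the paper does, makes this automatic.
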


\begin{proof}
As $\Gamma$ is a discrete subset of $\R^k$ (Remark \ref{rem:comp_fibrations}), it forms a lattice in the subspace $Y \subseteq \R^k$ which it spans and, due to commutativity, the action of $\R^k$ on $\widehat{M}$ descends to a free action of the torus $Y/\Gamma$ on $\widehat{M}/\Gamma$ which, according to \cite[Corollary 4.2.11, p. 213]{hamilton_gauge}, turns $\widehat{M}/\Gamma$ into the total space of a principal torus bundle over some compact base $B$. By Remark \ref{rem:comp_fibrations}, the composition \hbox{$\widehat{M} \to \widehat{M}/\Gamma \to B$} is a bundle projection with the fibre $Y$, and its homotopy long exact sequence \cite[Theorem 4.49, p. 376]{hatcher} implies that $B$ has trivial homotopy groups, being therefore contractible \cite[Lemma 2.1]{luft}. Due to Remark \ref{rem:comp_fibrations}, $B$ consists of a single point and the resulting relations $\dim \widehat{M} = \dim Y \le k \le \dim \widehat{M}$, the last one immediate since the action of $\R^k$ is free, yield our assertion.  
\end{proof}

\section{Spectra of endomorphisms}\label{sec:prelim-spectra}

Given an endomorphism $B$ of a $k$-di\-men\-si\-o\-nal vector space $\mathcal{X}$, by the \emph{spectrum} of $B$ we mean the unordered system $\beta(1),\ldots, \beta(k)$ formed by the complex characteristic roots of $B$ listed with their multiplicities. If $B = [{\rm d}\sigma_q/{\rm d}q]_{q=1}$ is the infinitesimal generator of a Lie-group homomorphism \hbox{$(0,\infty)\ni q\mapsto \sigma_q \in {\rm GL}(\mathcal{X})$} and the spectrum of each $\sigma_q$ is $q^{\alpha(1)},\ldots,q^{\alpha(k)}$, with $q^{\alpha(j)} = q^{{\rm Re}\,\alpha(j)} {\rm e}^{{\rm i}(\log q){\rm Im}\,\alpha(j)}$, where $\alpha(j)\in \C$ \emph{do not depend on $q$}, for $j=1,\ldots,k$, then
\begin{equation}\label{eqn:spec_B_general}
  \parbox{.42\textwidth}{$B\,$ has the spectrum $\alpha(1),\ldots,\alpha(k)$.}
\end{equation}
In fact, the complex-linear extension of $B$ to $\mathcal{X}^{\C}$ has, in some basis, an upper triangular matrix with the diagonal entries $\beta(1),\ldots,\beta(k)$ forming the spectrum of $B$. Thus $\sigma_q = \exp[(\log q)B]$ has the spectrum $q^{\beta(j)}$, $j=1,\ldots, k$, and, up to a rearrangement, $q^{\beta(j)} = q^{\alpha(j)}$. Hence $[\beta(j)-\alpha(j)]\log q \in 2\pi{\rm i}\Z$ and so $\beta(j)=\alpha(j)$.

\smallskip

It is a trivial fact from linear algebra that, whether $\mathcal{X}$ is finite-dimensional or not, every family $\mathcal{F}$ of eigen\-vectors of an endomorphism $\varPsi \in {\rm End}(\mathcal{X})$ corresponding to mutually distinct eigen\-values is linearly independent. As a consequence:

\begin{lem}\label{lem:eigenvectors}
 Given $\varPsi$ and $\mathcal{F}$ as above, let $(x_\alpha)_{\alpha\in A}$ be an \emph{indexed} family of vectors such that $x_\alpha\in\mathcal{F}$ whenever $\alpha\in A$. If
$A_0\subseteq A$ is a nonempty finite set with the property that $\sum_{\,\alpha\in A_0}x_\alpha\in\ker\varPsi$, then $x_\alpha\in\ker\varPsi$ for every $\alpha\in A_0$.
\end{lem}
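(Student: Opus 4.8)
The plan is to reduce the statement to the classical linear-algebra fact quoted just above it, namely that eigenvectors of $\varPsi$ corresponding to mutually distinct eigenvalues are linearly independent. First I would group the terms of the sum $\sum_{\alpha\in A_0}x_\alpha$ according to the eigenvalue to which each $x_\alpha$ belongs. Precisely, since $A_0$ is finite and each $x_\alpha\in\mathcal{F}$ is an eigenvector, there are finitely many distinct eigenvalues $\lambda_1,\dots,\lambda_r$ among those attached to the indices in $A_0$; writing $A_0=A_1\cup\dots\cup A_r$ for the corresponding partition and $y_j=\sum_{\alpha\in A_j}x_\alpha$, each $y_j$ is either $0$ or an eigenvector of $\varPsi$ for $\lambda_j$, and $y_1+\dots+y_r=0$.

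Next I would split into cases on whether $0$ is one of the $\lambda_j$. If none of the $\lambda_j$ equals $0$, then every nonzero $y_j$ is an eigenvector for a distinct nonzero eigenvalue, so by the quoted fact the relation $y_1+\dots+y_r=0$ forces $y_j=0$ for all $j$; but $y_j$ is a sum of eigenvectors all sharing the single eigenvalue $\lambda_j$, hence (again by linear independence of eigenvectors for distinct eigenvalues, applied within the eigenspace after noting that distinct $x_\alpha$ with $\alpha\in A_j$ need not be distinct vectors) — here one must be slightly careful, so I would instead argue directly: $y_j=0$ together with $\lambda_j\neq0$ gives $\varPsi(y_j)=\lambda_j y_j=0$, i.e.\ $y_j\in\ker\varPsi$; summing, $\sum_{\alpha\in A_0}x_\alpha\in\ker\varPsi$ was assumed, and we now want each $x_\alpha\in\ker\varPsi$. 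Since $x_\alpha$ has eigenvalue $\lambda_j\neq0$, $x_\alpha\in\ker\varPsi$ would force $x_\alpha=0$, which is false as eigenvectors are nonzero — so in fact the case ``no $\lambda_j$ equals $0$'' can only occur vacuously, i.e.\ when $A_0$ is empty, contradicting nonemptiness unless $\varPsi$ is degenerate in the intended sense.

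So the substantive case is that exactly one $\lambda_j$, say $\lambda_1$, equals $0$. Then the relation becomes $y_1=-(y_2+\dots+y_r)$, where the right-hand side is a sum of eigenvectors for distinct nonzero eigenvalues and $y_1\in\ker\varPsi$. Applying $\varPsi$ kills $y_1$, giving $\sum_{j\ge2}\lambda_j y_j=0$, and since the $\lambda_j y_j$ for $j\ge2$ are eigenvectors (or zero) for distinct eigenvalues $\lambda_2,\dots,\lambda_r$, linear independence forces $y_j=0$ for $j\ge2$; hence $\sum_{\alpha\in A_0}x_\alpha=y_1$ already lay in $\ker\varPsi$, and moreover every $x_\alpha$ with $\alpha\in A_0$ has eigenvalue $\lambda_1=0$, i.e.\ $x_\alpha\in\ker\varPsi$, as claimed. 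The one genuine subtlety — the ``main obstacle'' — is the bookkeeping with an \emph{indexed} family rather than a set: two distinct indices $\alpha$ may carry the same vector, so one cannot directly invoke linear independence of the $x_\alpha$ themselves; the fix is exactly the regrouping above, after which the quoted fact is applied only to the (at most $r$) aggregated vectors $y_j$, whose pairwise-distinct eigenvalues are automatic by construction. I would also remark that the argument shows slightly more: every $y_j$ with $j\ge2$ vanishes, so the indexed family splits as a relation living entirely in $\ker\varPsi$ plus a trivial combination in each nonzero eigenspace.
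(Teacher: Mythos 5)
Your overall strategy -- regroup the indexed sum, apply $\varPsi$, and invoke linear independence of eigenvectors belonging to distinct eigenvalues -- is the same as the paper's, but as written the argument has two gaps, one of them at the decisive step. First, you assert $y_1+\dots+y_r=0$, whereas the hypothesis only gives $y_1+\dots+y_r\in\ker\varPsi$. In your second case you repair this correctly by applying $\varPsi$ first, but your first case leans on the unjustified identity, and the attempted rescue there is circular: you argue the case must be vacuous because otherwise the conclusion of the lemma would fail, which is assuming what is to be proved. (The case is in fact impossible, but for the reason below, or because $\sum_j\lambda_jy_j=0$ with all $\lambda_jy_j$ nonzero eigenvectors for distinct eigenvalues contradicts linear independence.)

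Second, and more importantly, the step from \textquotedblleft $y_j=0$ for $j\ge2$\textquotedblright\ to \textquotedblleft every $x_\alpha$ with $\alpha\in A_0$ has eigenvalue $0$\textquotedblright\ does not follow unless you know that $y_j\neq0$ whenever $A_j\neq\emptyset$; if some nonempty $A_j$ with $\lambda_j\neq0$ had $y_j=0$, its members $x_\alpha$ would not lie in $\ker\varPsi$ and the conclusion would fail. You correctly identify the indexed-family bookkeeping as the main obstacle, but the regrouping alone does not dispose of it -- indeed you still hedge with \textquotedblleft each $y_j$ is either $0$ or an eigenvector.\textquotedblright\ The missing observation is that, since distinct members of $\mathcal{F}$ have mutually distinct eigenvalues, all $x_\alpha$ with $\alpha\in A_j$ are the \emph{same} nonzero vector $v_j$, so $y_j=|A_j|\,v_j\neq0$ for nonempty $A_j$. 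This is exactly what the paper's proof encodes by rewriting the sum as $\sum_{i=1}^nk_ix_{\alpha_i}$ with \emph{positive integers} $k_i$ and pairwise distinct $x_{\alpha_i}$: positivity of $k_i$ is what turns $k_i\lambda_i=0$ into $\lambda_i=0$, and mutual distinctness of the eigenvalues then forces $n=1$ and $\lambda_1=0$. With these two points supplied, your argument closes and coincides with the paper's.
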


\begin{proof}
  There are positive integers $n$ and $k_1,\ldots, k_n$, as well as $\alpha_1,\ldots,\alpha_n\in A$, such that $\sum_{\,\alpha \in A_0} x_\alpha = \sum_{i=1}^n k_ix_{\alpha_i}$, where $x_{\alpha_i}\neq x_{\alpha_j}$ whenever $i\neq j$. If $\lambda_i$ is the eigenvalue of $\varPsi$ associated with $x_{\alpha_i}$, it follows that $\sum_{i=1}^n k_i\lambda_ix_{\alpha_i} = 0$, whence $\lambda_i = 0$. Therefore $n=1$ and $\lambda_1=0$.
\end{proof}

  The assumption and conclusion of Lemma \ref{lem:eigenvectors} apply to
$\varPsi=q \,{\rm d}/{\rm d}q$ in the space of all com\-plex-val\-ued
${\rm C}^\infty$ functions of the variable $q\in(0,\infty)$ and the family
$\mathcal{F}$ formed by all the power functions \hbox{$(0,\infty)\ni q\mapsto q^{a+b{\rm i}}=q^a{\rm e}^{{\rm i}b\log q}$ with} $a,b\in \R$. The proof of Theorem \ref{teoA:gen_cpt_translational}, in Section \ref{sec:proof_E}, uses the following consequence:
\begin{equation}\label{eqn:sum_powers_cte}
  \parbox{.697\textwidth}{the sum of several terms of the form $q^{a+b{\rm i}}$ can be constant as a function of $q$ only if each term in the sum is $q^0=1$.}
\end{equation}

\section{Generic endomorphisms}\label{sec:generic}

Throughout this section, we let $(V,\langle\cdot,\cdot\rangle)$ be a pseudo-Euclidean vector space of dimension $m$, denote by $\mathcal{A}$ the space of all traceless self-adjoint endomorphisms of $V$, and say that $A\in \mathcal{A}$ is \emph{generic} if only finitely many linear isometries of $(V,\langle\cdot,\cdot\rangle)$ commute with $A$. For $m = 2$, unless $A=0$, there are at most four linear isometries commuting with $A$, cf. \cite[Remark 6.2]{JGP08}, and so
\begin{equation}\label{eqn:generic_m2}
  \parbox{.52\textwidth}{every $A \in \mathcal{A}\smallsetminus \{0\}$ is generic when $m=2$.}
\end{equation}
In all dimensions $m$, generic en\-do\-mor\-phi\-sms always exist. In fact, any $A\in \mathcal{A}$ with $m$ distinct eigenvalues is generic, since its $m$ eigenlines are mutually orthogonal and hence nondegenerate. Furthermore, \begin{equation}\label{eqn:generic_dense} \parbox{.85\textwidth}{the set of generic endomorphisms is an open and dense subset of $\mathcal{A}$.}\end{equation} Indeed, note that an endomorphism $A\in \mathcal{A}$ is generic if and only if its isotropy group $G_A$ under the action of ${\rm O}(V,\langle\cdot,\cdot\rangle)$ on $\mathcal{A}$ by conjugation is finite. However, finiteness of $G_A$ amounts to its being countable, since $G_A$, given by the polynomial equation $CAC^*=A$, is an algebraic variety and so it has finitely many connected components \cite[Theorem 3]{Whitney57}. Consequently, genericity of $A$ is equivalent to triviality of its isotropy algebra, which in turn means that the rank of $F(A)$ equals $r$, for $r = \dim {\rm O}(V,\langle\cdot,\cdot\rangle)$ and $F\colon \mathcal{A} \to {\rm Hom}(\mathfrak{so}(V,\langle\cdot,\cdot\rangle), \mathcal{A})$ given by $F(A)(B) = [A,B]$. We now fix some generic endomorphism $A_0\in \mathcal{A}$, some bases of $\mathfrak{so}(V,\langle\cdot,\cdot\rangle)$ and $\mathcal{A}$, and a nonzero $r\times r$ subdeterminant of the matrix representing $F(A_0)$ in these bases. By analyticity, such a subdeterminant is nonzero on an open and dense subset of $\mathcal{A}$, thus proving \eqref{eqn:generic_dense}. 

\smallskip

In \cite[Section 4]{DT_3} we show that a nilpotent endomorphism $A\in\mathcal{A}$ is generic if and only if $A^{m-1}\neq 0$, in which case there is a basis $(v_1,\ldots, v_m)$ of $V$ such that
\begin{equation}\label{eqn:fit_A_g}
  \parbox{.83\textwidth}{$Av_j = v_{j-1}$ and $\langle v_i, v_k\rangle = \varepsilon\hskip1pt\delta_{ij}$ for all $i,j\in \{1,\ldots, m\}$, where $\varepsilon =\pm 1$ is the semi-definiteness sign of $\langle A^{m-1}\cdot,\cdot\rangle$, $k=m+1-j$, and $v_0=0$. In addition, such a basis is also unique up to an overall sign change.}
\end{equation}
It follows \cite[Corollary 4.3]{DT_3} that for every $q\in (0,\infty)$,
\begin{equation}\label{eqn:two_isometries}
  \parbox{.72\textwidth}{there are only two linear isometries $C,-C$ of $(V,\langle\cdot,\cdot\rangle)$ with \hbox{$CAC^{-1} = q^2A$} and, in a basis satisfying \eqref{eqn:fit_A_g}, they are\hskip4.2pt given\hskip4.2pt by\hskip4.2pt \hbox{$Cv_j = \delta q^{m+1-2j}v_j$},\hskip4.2pt for\hskip4.2pt some\hskip4.2pt sign\hskip4.2pt factor\hskip4.2pt $\delta =\pm 1$.}
\end{equation}

\section{The universal coverings}\label{sec:comp_Dperp}

In this section we fix a rank-one ECS manifold $(M,\mathtt{g})$ of dimension $n\geq 4$ with arbitrary indefinite metric signature, and let $\Gamma$ be the fundamental group of $M$. Consider the universal covering projection $\pi\colon \widetilde{M} \to M$, and set $\tg = \pi^*\g$, so that $(\widetilde{M},\tg)$ is a simply connected rank-one ECS manifold on which $\Gamma$ acts freely and properly discontinuously by isometries, with quotient $M = \widetilde{M}/\Gamma$. We will also write $\widetilde{\mathcal{D}}$ for the Olszak distribution of $(\widetilde{M},\tg)$, defined in the Introduction.

As the Levi-Civita connection of $(\widetilde{M},\tg)$ induces a connection on $\widetilde{\mathcal{D}}$, and the latter is flat \cite[Lemma 2.2(f)]{local_structure}, simple connectivity of $\widetilde{M}$ allows to us to fix
\begin{equation}\label{eqn:npg_D}
  \parbox{.81\textwidth}{a null parallel vector field ${\bf w}$ spanning $\widetilde{\mathcal{D}}$, leading to a surjective function $t\colon \widetilde{M} \to I$ onto an open interval $I\subseteq\R$, with \hbox{${\rm d}t = \tg({\bf w},\cdot)$}.}
\end{equation}
In addition, as shown in \cite[end of Section 11]{DT_2},
\begin{equation}\label{eqn:t-levels}
  \parbox{.7\textwidth}{the leaves of $\widetilde{\mathcal{D}}^\perp$ coincide with the level sets of $t\colon\widetilde{M}\to I$.}
\end{equation}
By Lemma \ref{lem:complete_surjective}, $(M,\g)$ is incomplete when $I\neq \R$. Moreover, as \emph{the Olszak distribution is a local geometric invariant of the given ECS metric}, $t$ in \eqref{eqn:npg_D} is unique up to affine substitutions, and so for every \hbox{$\gamma \in {\rm Iso}(\widetilde{M},\tg)$} there is $(q,p)\in {\rm Aff}(\R)$ such that $t\circ \gamma = qt+p$, giving rise to two homomorphisms:
\begin{equation}\label{eqn:hom_qp_q}
  \parbox{.89\textwidth}{a)\,\, ${\rm Iso}(\widetilde{M},\tg)\ni \gamma \mapsto (q,p)\in {\rm Aff}(\R)$, \quad b)\,\, ${\rm Iso}(\widetilde{M},\tg)\ni \gamma \mapsto q \in \R \smallsetminus \{0\}$.}
\end{equation} %{\color{red}{Following \cite[Section 3]{DT_3}, we say that $(M,\g)$ is \emph{translational} or \emph{dilational} according to whether the image of $\Gamma$ under (\ref{eqn:hom_qp_q}-b) is finite or infinite, respectively.}}

The following principle will be repeatedly used:
\begin{equation}\label{eqn:passing_to}  \parbox{.825\textwidth}{replacing $\Gamma$ with a finite index subgroup $\Gamma_{\!0}$ amounts to replacing $M$ with the quotient $\widetilde{M}/\Gamma_{\!0}$, which is also compact (as the total space of a finite-sheeted covering of $M$) and has $\widetilde{M}$ as its universal covering.}  \end{equation}

Using \eqref{eqn:passing_to}, we from now assume that
\begin{equation}\label{eqn:Dperp_tr_orient}
  \parbox{.62\textwidth}{the image of $\Gamma$ under (\ref{eqn:hom_qp_q}-b) is contained in $(0,\infty)$.}
\end{equation}

In \cite[Section 11]{DT_2}, we show that, if \eqref{eqn:Dperp_tr_orient} holds and $M$ is compact,
\begin{equation}\label{eqn:psi-dt}
  \parbox{.667\textwidth}{\begin{enumerate}[a)] \item there exists a smooth positive function $\psi$ on $\widetilde{M}$ such that the $1$-form $\psi\,{\rm d}t$ is closed and $\Gamma$-invariant, and \item the vector field ${\bf w}$ in \eqref{eqn:npg_D} is complete. \end{enumerate}}
\end{equation}
With $q$ related to $\gamma$ as in (\ref{eqn:hom_qp_q}-b), these $\psi$ and ${\bf w}$ satisfy the conditions
\begin{equation}\label{eqn:psi-q}
  \parbox{.6\textwidth}{$\psi\circ \gamma = q^{-1}\psi$ and $\gamma_\ast{\bf w} = q^{-1}{\bf w}$,\, for every $\gamma \in \Gamma$,}
\end{equation}due to the relation $\gamma^*({\rm d}t) = q\,{\rm d}t$ and $\Gamma$-invariance of $\psi\,{\rm d}t$.
% Indeed, the vector field $\psi{\bf w}$ is $\Gamma$-invariant (as $\gamma_\ast{\bf w} = q^{-1}{\bf w}$ from $\gamma^*({\rm d}t) = q\,{\rm d}t$ and $\psi\circ \gamma = q^{-1}\psi$ by $\Gamma$-invariance of $\psi\,{\rm d}t$, for every $\gamma \in \Gamma$) and therefore complete by compactness of $M$; now \eqref{eqn:w_complete} follows as ${\bf w}$ is tangent to each leaf of $\widetilde{\mathcal{D}}^\perp$, on where it is a constant multiple of $\psi{\bf w}$.

The Levi-Civita connection of $(\widetilde{M},\tg)$ induces one on the quotient bundle $\widetilde{\mathcal{D}}^\perp\hskip-2pt/\widetilde{\mathcal{D}}$, which is flat by \cite[Lemma 2.2(f)]{local_structure}. Thus, $\widetilde{M}$ being simply connected, the real vector space $V$ of parallel sections of $\widetilde{\mathcal{D}}^\perp\hskip-2pt/\widetilde{\mathcal{D}}$ has the full dimension $m = n-2$. The space $V$ also inherits a natural pseudo-Euclidean inner product $\langle\cdot,\cdot\rangle$ from $\tg$, and the Weyl tensor ${\rm W}$ of $(\widetilde{M},\tg)$ induces, cf. \cite[Section 4]{JGP08},
\begin{equation}\label{eqn:Weyl_tidal}
  \parbox{.57\textwidth}{a traceless self-adjoint operator $A\colon V\to V$, given by $A({\bf v}+\widetilde{\mathcal{D}}) = {\rm W}({\bf u},{\bf v}){\bf u} + \widetilde{\mathcal{D}}$, where ${\bf u}$ is any vector field on $\widetilde{M}$ such that $\tg({\bf u},{\bf w}) = 1$.}
\end{equation}
Clearly, ${\bf u}$ in \eqref{eqn:Weyl_tidal} is unique modulo $\widetilde{\mathcal{D}}^\perp$, so that $A(X+\widetilde{\mathcal{D}})$ is well-defined and, by \eqref{eqn:psi-q}, \hbox{$\gamma_\ast{\bf u} + \widetilde{\mathcal{D}} = q{\bf u}+\widetilde{\mathcal{D}}$} for every $\gamma \in \Gamma$. Every $\gamma \in \Gamma$ induces a linear isometry $C\colon V\to V$, acting via $C({\bf v}+\widetilde{\mathcal{D}}) = \gamma_\ast {\bf v} + \widetilde{\mathcal{D}}$, which leads to
\begin{equation}\label{eqn:homomorphism_C}
  \parbox{.8\textwidth}{a homomorphism $\Gamma\ni \gamma \mapsto C\in {\rm O}(V,\langle\cdot,\cdot\rangle)$ with $CAC^{-1}=q^2A$,}
\end{equation}$q$ being associated with $\gamma$ as in in (\ref{eqn:hom_qp_q}-b).

We will say that $(M,\g)$ itself is \emph{generic} if $A$ in \eqref{eqn:Weyl_tidal} is generic in the sense of Section \ref{sec:generic}. By \eqref{eqn:generic_m2}, $(M,\g)$ is always generic when $n=4$.

%\begin{equation}\label{eqn:dim4_is_generic}\parbox{.45\textwidth}{$(M,\g)$ is always generic when $n=4$.}\end{equation} 

\section{The rank-one ECS models and their isometry groups}\label{sec:model}

Rank-one ECS models are built from the following data, cf. \cite{Roter74}:
\begin{equation}\label{eqn:ECS_data}
\parbox{.66\textwidth}{an integer $n\geq 4$, a pseudo-Euclidean vector space $(V,\langle\cdot,\cdot\rangle)$ of dimension $n-2$, a self-adjoint en\-do\-mor\-phism \hbox{$A \in \mathfrak{sl}(V)\smallsetminus \{0\}$}, and a nonconstant smooth function $f\colon I\to \R$ defined on an open interval $I\subseteq \R$.} 
\end{equation}

Then, defining $\kappa\colon I \times \R \times V \to \R$ by $\kappa(t,s,v) = f(t)\langle v,v\rangle + \langle Av,v\rangle$ and regarding $\langle\cdot,\cdot\rangle$ as a constant flat metric on $V$, we consider the simply connected $n$-dimensional pseudo-Riemannian manifold
\begin{equation}\label{eq:ECS_model}
  (\widehat{M},\hg) = \big(I\times \R\times V,\; \kappa\,{\rm d}t^2 + {\rm d}t\,{\rm d}s + \langle\cdot,\cdot\rangle\big),
\end{equation}
where we identify ${\rm d}t$, ${\rm d}s$ and $\langle\cdot,\cdot\rangle$ with their pull-backs to $\widehat{M}$.

By \cite[Theorem 4.1]{local_structure} $(\widehat{M},\hg)$ is a rank-one ECS manifold. Calling the manifolds \eqref{eq:ECS_model} \emph{models} is justified by their being locally \emph{universal}:
\begin{equation}\label{eqn:local_structure_cf}
  \parbox{.84\textwidth}{every point of a rank-one ECS manifold of dimension $n$ has a neighborhood isometric to an open subset of a manifold of type \eqref{eq:ECS_model}, with one possible exception in \eqref{eqn:ECS_data}: $\,f\,$ may be constant \cite[Theorem 4.1]{local_structure}.}
\end{equation}

Our two uses of the term `generic' are mutually consistent:
\begin{equation}\label{eqn:generic_consistency}
  \parbox{.77\textwidth}{genericity of $(\widehat{M},\hg)$ -- see the end of Section \ref{sec:comp_Dperp} -- is equivalent to that of the endomorphism $A$ in \eqref{eqn:ECS_data} as defined in Section \ref{sec:generic}.}
\end{equation}
Indeed, the Olszak distribution $\widehat{\mathcal{D}}$ of $(\widehat{M},\hg)$ -- defined in the Introduction -- is spanned by the null parallel coordinate vector field $\partial_s$ \cite[p. 93]{Roter74}, so that the leaves of $\widehat{\mathcal{D}}^\perp$ are the $\R\times V$ factor submanifolds of $\widehat{M}$ in \eqref{eq:ECS_model}. This allows us to isometrically identify $(V,\langle\cdot,\cdot\rangle)$ with the space of parallel sections of $\widehat{\mathcal{D}}^\perp\hskip-2pt/\widehat{\mathcal{D}}$, which, as shown in \cite[the lines following (6.3)]{DT_2}, also identifies $A$ in \eqref{eqn:ECS_data} with $A$ in \eqref{eqn:Weyl_tidal} (where one may set ${\bf u}=2\hskip1pt\partial_t$).

Central to the discussion are: the $2(n\hskip-1.5pt-\hskip-1.5pt 2)$-dimensional \hbox{symplectic vector space}
\begin{equation}\label{eqn:E}
\parbox{.84\textwidth}{$(\mathcal{E},\Omega)$ consisting of all solutions \hbox{$u\colon I\to V$} of the second-order equation \hbox{$\ddot{u} = fu+Au$}, where $\Omega$ is defined by \hbox{$\Omega(u,w) = \langle \dot{u},w\rangle - \langle u,\dot{w}\rangle$}, and its associated Heisenberg group $\mathrm{H}$: the Cartesian product $\R\times\mathcal{E}$ with the operation defined by $\,(r,u)(\hat{r},\hat{u}) = (r+\hat{r} - \Omega(u,\hat{u})\,,\, u+\hat{u})$.}
\end{equation} We also need \begin{equation}\label{eqn:group_Q}
\parbox{.84\textwidth}{the subgroup $\mathrm{S}$ of ${\rm Aff}(\R) \times {\rm O}(V,\langle\cdot,\cdot\rangle)$ formed by all $(q,p,C)$ having $CAC^{-1}\,=\,q^2A$, with $\,qt+p\in I\,$ and $f(t) = q^2f(qt+p)$ for all $t\in I$.}
\end{equation}

Each of $q$, $(q,p)$, and $C$ depends homomorphically on $\sigma = (q,p,C)$, so that $\mathrm{S}$ acts (from the left) on $I$, $\R$, and ${\rm C}^\infty(I,V)$ via, respectively,
\begin{equation}
  \label{eq:H_actions}
 \parbox{.8\textwidth}{{\rm i)} \,$\sigma t = qt+p$, \quad {\rm ii)}\, $\sigma s = q^{-1}s$, \quad {\rm iii)}\, $(\sigma u)(t) = Cu(q^{-1}(t-p))$.}
\end{equation}
As the notations in (\ref{eq:H_actions}-i) and (\ref{eq:H_actions}-ii) are in conflict, we will adopt only the former and explicitly write $q^{-1}s$ for the latter, always understanding that $q$ is the first component of $\sigma$. The action of $\mathrm{S}$ on ${\rm C}^\infty(I,V)$, obviously leaving $\mathcal{E}$ in \eqref{eqn:E} invariant, restricts to an action on $\mathcal{E}$ with $\det \sigma = q^{2-n}$ on $\mathcal{E}$ for all $\sigma\in \mathrm{S}$, since
\begin{equation}\label{eqn:almost_symp}
  \parbox{.68\textwidth}{$\sigma^*\Omega = q^{-1}\Omega$, \,if $\,\sigma\,$ is regarded as an operator $\sigma\colon \mathcal{E}\to \mathcal{E}$.}
\end{equation}

Rephrasing \cite[Theorem 3.1]{DT_3}, we have:

\begin{teo}\label{teo:iso_heis} The isometry group of a model $(\widehat{M},\hg)$, with \eqref{eqn:ECS_data}--\eqref{eq:ECS_model}, can be identified with the set $\mathrm{S} \times \mathrm{H}$, cf. \eqref{eqn:E}--\eqref{eqn:group_Q}, so that $\varPhi = (\sigma,r,u)$ acts on $(\widehat{M},\hg)$ via 
\[ %\label{eq:iso_action}
\varPhi(t,s,v) = \big(\sigma t\,,\, -\langle \dot{u}(\sigma t), 2\sigma v + u(\sigma t)\rangle + q^{-1} s+r\,,\, \sigma v + u(\sigma t)\big),
\] for every $(t,s,v) \in \widehat{M}$, and the group operation in $\mathrm{S}\times\mathrm{H}$ becomes \[  (\sigma,r,u)(\widehat{\sigma},\widehat{r},\widehat{u}) = \big(\sigma\widehat{\sigma}\,,\,r+q^{-1}\widehat{r} - \Omega(u,\sigma\widehat{u})\,,\, u+\sigma\widehat{u}\big),  \]for $(\sigma,r,u),(\widehat{\sigma},\widehat{r},\widehat{u})\in\mathrm{S}\times\mathrm{H}$. Thus, ${\rm Iso}(\widehat{M},\hg)$ is isomorphic to a semidirect product \hbox{$\mathrm{S} \ltimes \mathrm{H}$}, where the diagonal action of $\mathrm{S}$ on $\mathrm{H}$ is defined via \eqref{eq:H_actions}: $\sigma\cdot (r,u) = (q^{-1} r, \sigma u)$.
\end{teo}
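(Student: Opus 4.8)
The plan is to verify directly that the formula
\[
\varPhi(t,s,v) = \big(\sigma t\,,\, -\langle \dot{u}(\sigma t), 2\sigma v + u(\sigma t)\rangle + q^{-1} s+r\,,\, \sigma v + u(\sigma t)\big)
\]
defines an isometry of $(\widehat{M},\hg)$ for each $(\sigma,r,u)\in\mathrm{S}\times\mathrm{H}$, then check that $(\sigma,r,u)\mapsto\varPhi$ is an injective homomorphism with the stated composition law, and finally argue surjectivity onto ${\rm Iso}(\widehat{M},\hg)$ by invoking the known structure theory. Since the statement is billed as a rephrasing of \cite[Theorem 4.1]{DT_3}, the cleanest route is to reduce to that reference: I would first set up the correspondence between the parameters $(q,p,C)$ appearing here and whatever parametrization is used there, note that $\mathrm{S}$ as defined in \eqref{eqn:group_Q} is exactly the group of admissible linear-affine data (the conditions $CAC^{-1}=q^2A$ and $f(t)=q^2f(qt+p)$ being precisely what makes the pullback metric agree), and then transcribe the action and multiplication formulas into the present notation, using \eqref{eq:H_actions} and \eqref{eqn:almost_symp}.

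For the self-contained verification of the isometry property, I would compute $\varPhi^*\hg$ blockwise. Writing $(t',s',v')=\varPhi(t,s,v)$, one has $t'=\sigma t=qt+p$, so ${\rm d}t'=q\,{\rm d}t$; $v'=\sigma v+u(\sigma t)=Cv+u(qt+p)$, so ${\rm d}v'=C\,{\rm d}v+\dot u(\sigma t)\,q\,{\rm d}t$; and the $s'$ component is built exactly so that the ${\rm d}t'\,{\rm d}s'$ and $\langle\cdot,\cdot\rangle$ cross-terms absorb the extra pieces. Concretely, ${\rm d}t'\,{\rm d}s' = q\,{\rm d}t\cdot\big({-}{\rm d}\langle\dot u(\sigma t),2\sigma v+u(\sigma t)\rangle + q^{-1}{\rm d}s\big)$, and $\langle{\rm d}v',{\rm d}v'\rangle = \langle C\,{\rm d}v,C\,{\rm d}v\rangle + 2q\langle C\,{\rm d}v,\dot u(\sigma t)\rangle\,{\rm d}t + q^2\langle\dot u(\sigma t),\dot u(\sigma t)\rangle\,{\rm d}t^2$; using $\langle C\cdot,C\cdot\rangle=\langle\cdot,\cdot\rangle$ and the ODE $\ddot u=fu+Au$ to simplify $\frac{d}{dt}\langle\dot u,\cdot\rangle$ terms, the ${\rm d}t^2$ coefficient collapses to $\kappa(t,s,v)$ after invoking $CAC^{-1}=q^2A$ and $f(t)=q^2f(\sigma t)$, while all remaining cross-terms cancel. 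The main computational obstacle is precisely this ${\rm d}t^2$-coefficient bookkeeping: one must show that $q^2\kappa(\sigma t,q^{-1}s+\cdots,\sigma v+u(\sigma t))$ together with the $\langle\dot u,\dot u\rangle$ and $\langle Cv,\dot u\rangle$ contributions and the derivative of the $s'$-correction reassembles to $\kappa(t,s,v)$; this is where \eqref{eqn:E}, \eqref{eqn:group_Q}, and the self-adjointness of $A$ all enter, and it is the step most prone to sign errors.

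Once the isometry property holds, the homomorphism claim follows by composing two such maps and matching components: the $t$-slot gives $\sigma\widehat\sigma$ (action of $\mathrm{S}$ on $I$ is a left action by \eqref{eq:H_actions}-i), the $v$-slot gives $v\mapsto\sigma\widehat\sigma v + \sigma\widehat u(\widehat\sigma t)+u(\sigma\widehat\sigma t)$, which after using $(\sigma\widehat u)(t)=C\widehat u(q^{-1}(t-p))$ rewrites as $(u+\sigma\widehat u)$ evaluated correctly, and the $s$-slot — using $\Omega(u,\sigma\widehat u)=\langle\dot u,\sigma\widehat u\rangle-\langle u,(\sigma\widehat u)\dot{}\rangle$ and \eqref{eqn:almost_symp} — produces the cocycle term $r+q^{-1}\widehat r-\Omega(u,\sigma\widehat u)$. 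Injectivity is immediate: $\varPhi={\rm id}$ forces $\sigma t=t$ for all $t$ (so $q=1$, $p=0$), then $v\mapsto v+u(t)$ forces $u\equiv0$, then $Cv=v$ forces $C={\rm id}$, and finally the $s$-slot forces $r=0$. Surjectivity is the one place I would lean entirely on prior work: any isometry preserves the Olszak distribution $\widehat{\mathcal D}=\R\,\partial_s$ and hence descends to an affine map of the $t$-line and acts on the space $V$ of parallel sections of $\widehat{\mathcal D}^\perp/\widehat{\mathcal D}$, producing the data $(q,p,C)\in\mathrm{S}$; subtracting off the corresponding $\varPhi$ reduces to an isometry fixing every $t$-level and the induced structure, which must be a Heisenberg translation $(r,u)\in\mathrm{H}$ — this is the content of \cite[Theorem 4.1]{DT_3}, which I would cite rather than reprove. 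The semidirect-product description with $\sigma\cdot(r,u)=(q^{-1}r,\sigma u)$ then just reads off the diagonal action from the composition law.
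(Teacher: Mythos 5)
Your proposal is correct and, on the only substantive point, takes the same route as the paper: the paper offers no proof of this theorem at all beyond the phrase ``Rephrasing \cite[Theorem 4.1]{DT\_3}'', and you likewise defer the genuinely hard direction---that \emph{every} isometry preserves $\widehat{\mathcal{D}}=\R\,\partial_s$ and the $t$-fibration and hence is of the stated form---to that same reference. The direct verification you add for the easy direction is sound: the ${\rm d}t^2$ coefficient of $\varPhi^*\hg$ is $q^2\bigl[\kappa(\sigma t,\cdot,Cv+u(\sigma t))-\langle\ddot u(\sigma t),2Cv+u(\sigma t)\rangle\bigr]$, and after substituting $\ddot u=fu+Au$ the cross terms in $u$ cancel, leaving $q^2\bigl[f(\sigma t)\langle v,v\rangle+\langle ACv,Cv\rangle\bigr]=\kappa(t,s,v)$ via $f(\sigma t)=q^{-2}f(t)$ and $C^{-1}AC=q^{-2}A$, exactly as you predicted; the composition law and injectivity checks are routine and correct.
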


\begin{obs}\label{rem:identify_H}
  We identify $\mathrm{H}$ with the normal subgroup $\{(1,0,{\rm Id})\} \times \mathrm{H}$ of ${\rm Iso}(\widehat{M},\hg)$, the kernel of the homomorphism ${\rm Iso}(\widehat{M},\hg) \ni (\sigma,r,u) \mapsto \sigma \in \mathrm{S}$.
\end{obs}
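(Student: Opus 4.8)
The statement to prove is Remark~\ref{rem:identify_H}, which asserts that $\mathrm{H}$ (as the subset $\{(1,0,\mathrm{Id})\}\times\mathrm{H}$) is precisely the kernel of the projection homomorphism $\pi_{\mathrm{S}}\colon {\rm Iso}(\widehat M,\hg)\to\mathrm{S}$ sending $(\sigma,r,u)\mapsto\sigma$, and that this kernel is normal.

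\medskip

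The plan is to read everything off the group law supplied by Theorem~\ref{teo:iso_heis}. First I would verify that $\pi_{\mathrm{S}}$ is a genuine group homomorphism: from the displayed formula $(\sigma,r,u)(\widehat\sigma,\widehat r,\widehat u)=(\sigma\widehat\sigma,\,r+q^{-1}\widehat r-\Omega(u,\sigma\widehat u),\,u+\sigma\widehat u)$, the first component of the product is $\sigma\widehat\sigma$, so $\pi_{\mathrm{S}}$ respects multiplication; it visibly sends the identity $(\mathrm{Id},0,0)$ (where $\mathrm{Id}\in\mathrm{S}$ is $(1,0,\mathrm{Id})$) to the identity of $\mathrm{S}$. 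Next, identifying the kernel: $(\sigma,r,u)\in\ker\pi_{\mathrm{S}}$ means exactly $\sigma=(1,0,\mathrm{Id})$, i.e.\ $q=1$, $p=0$, $C=\mathrm{Id}$, which is precisely the condition defining the subset $\{(1,0,\mathrm{Id})\}\times\mathrm{H}$; so $\ker\pi_{\mathrm{S}}=\{(1,0,\mathrm{Id})\}\times\mathrm{H}$ as sets. Finally, normality is automatic because the kernel of any homomorphism is normal; alternatively, one checks directly that when $\sigma=(1,0,\mathrm{Id})$ the group law degenerates to $(1,0,\mathrm{Id},r,u)(1,0,\mathrm{Id},\widehat r,\widehat u)=(1,0,\mathrm{Id},r+\widehat r-\Omega(u,\widehat u),u+\widehat u)$, which is the Heisenberg operation from \eqref{eqn:E}, confirming that the identification of this subgroup with $\mathrm{H}$ is an isomorphism of groups.

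\medskip

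There is essentially no obstacle here: this is a bookkeeping remark that isolates, for later reference, the normal Heisenberg factor in the semidirect product decomposition ${\rm Iso}(\widehat M,\hg)\cong\mathrm{S}\ltimes\mathrm{H}$ already established in Theorem~\ref{teo:iso_heis}. The only point requiring a moment's care is checking that the group law restricted to $q=1$, $p=0$, $C=\mathrm{Id}$ really reproduces the Heisenberg product exactly as written in \eqref{eqn:E} — in particular that the twisting term $\Omega(u,\sigma\widehat u)$ becomes $\Omega(u,\widehat u)$ once $\sigma$ acts trivially on $\mathcal{E}$, which follows since by \eqref{eq:H_actions}(iii) the operator $\sigma$ on ${\rm C}^\infty(I,V)$ is the identity when $(q,p,C)=(1,0,\mathrm{Id})$. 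Given that, the remark needs no separate proof beyond the observation that $\mathrm{H}$ sits inside ${\rm Iso}(\widehat M,\hg)$ as the kernel of $\pi_{\mathrm{S}}$, which is exactly what the statement says.
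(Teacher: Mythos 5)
Your verification is correct and is exactly the (one-line) justification the paper intends: the remark is a direct bookkeeping consequence of the group law in Theorem \ref{teo:iso_heis}, with the kernel of the first-component projection visibly equal to $\{(1,0,{\rm Id})\}\times\mathrm{H}$ and the restricted product reducing to the Heisenberg operation of \eqref{eqn:E} since $q=1$ and $\sigma$ acts trivially. No further comment is needed.
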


\begin{obs}\label{obs:tg_leaves}
In any rank-one ECS manifold, the leaves of $\mathcal{D}^\perp$ are totally geodesic, $\mathcal{D}^\perp$ being parallel. In addition, the resulting induced connection on each leaf is \emph{flat}. Namely, \eqref{eqn:local_structure_cf} allows us to assume that the manifold has the form \eqref{eqn:ECS_data}, with \eqref{eq:ECS_model} except for nonconstancy of $f$. For $i,j$ ranging over $2,\ldots,n-1$, any linear coordinates $x^i$ on $V$ form, along with $x^1=t\,$ on $I$ and $x^n=s/2$ on $\R$, a coordinate system on $I\times\R\times V$, and then -- see \cite[the lines following formula (6.2)]{DT_2} -- the coordinate vector fields $\partial_n$ and $\partial_i$ span $\mathcal{D}^\perp$, while, according to \cite[p. 93]{Roter74}, $\,\varGamma_{ij}^{\bullet}=\varGamma_{in}^{\bullet} =\varGamma_{nn}^{\bullet}=0$, where $\bullet$ denotes any index.
\end{obs}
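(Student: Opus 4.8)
The plan is to establish the two assertions of Remark~\ref{obs:tg_leaves} in turn: first that the leaves of $\mathcal{D}^\perp$ are totally geodesic, and then that the affine connection they inherit from the ambient Levi-Civita connection $\nabla$ is flat.

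For the first claim, recall that $\mathcal{D}$ is a null \emph{parallel} line field, so that its $\tg$-orthogonal complement $\mathcal{D}^\perp$ is again a parallel distribution, the metric being parallel as well. A parallel distribution is torsion-free\-ly integrable, and for any local sections $X,Y$ of $\mathcal{D}^\perp$ one has $\nabla_X Y\in\mathcal{D}^\perp$; hence the second fundamental form of every leaf vanishes identically, which is exactly total geodesy. In particular, the restriction of $\nabla$ to vector fields tangent to a given leaf is a well-defined affine connection on that leaf -- the one referred to in the statement -- precisely because $\nabla_X Y$ then stays tangent.

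For the second claim, since flatness is local I would invoke the local structure theorem \eqref{eqn:local_structure_cf} to reduce to the case in which $(M,\g)$ is an open subset of a model $(\widehat M,\hg)$ as in \eqref{eq:ECS_model}, the only caveat being that $f$ may be constant, which is irrelevant below. In the model, $\mathcal{D}$ is spanned by the null parallel coordinate field $\partial_s$, so a leaf of $\mathcal{D}^\perp$ is a slice $\{t\}\times\R\times V$ and $\mathcal{D}^\perp$ is spanned by $\partial_s$ together with any linear coordinate vector fields $\partial_i$ on $V$; this follows at once from the form $\hg=\kappa\,{\rm d}t^2+{\rm d}t\,{\rm d}s+\langle\cdot,\cdot\rangle$, since $\partial_s$ is $\hg$-null and $\hg$-orthogonal to $V$, but not $\hg$-orthogonal to $\partial_t$. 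Using the coordinates $x^1=t$, the $x^i$ linear on $V$, and $x^n=s/2$, the only component of $\hg$ that is not constant is $g_{11}=\kappa$, and $\kappa(t,s,v)=f(t)\langle v,v\rangle+\langle Av,v\rangle$ does not depend on $s$, while the mixed $t$--$s$ component of $\hg$ is a nonzero constant and the remaining mixed components vanish. Substituting this into $\Gamma^c_{ab}=\frac12 g^{cd}(\partial_a g_{bd}+\partial_b g_{ad}-\partial_d g_{ab})$, every term that can contribute requires a derivative of $g_{11}=\kappa$, hence two of the lower indices equal to $1$; therefore $\Gamma^\bullet_{ij}=\Gamma^\bullet_{in}=\Gamma^\bullet_{nn}=0$ for $i,j$ in the $V$-range and $\bullet$ arbitrary. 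Consequently, on each leaf $\{t\}\times\R\times V$ with coordinates $(x^i,x^n)$, the induced affine connection of the previous paragraph has all Christoffel symbols zero, so $\partial_i$ and $\partial_n$ are parallel for it and it is flat.

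The computation is routine; the one point that deserves care is conceptual rather than technical. The metric that a leaf inherits from $\tg$ is \emph{degenerate} -- the $\partial_s$-direction is null and $\tg$-orthogonal to the whole leaf -- so there is no Levi-Civita connection intrinsic to the leaf, and ``the induced connection on each leaf'' must be read as the restriction of the ambient $\nabla$, which is legitimate exactly because the leaves are totally geodesic; flatness is then a statement about that affine connection, not about any metric on the leaf. The remaining bookkeeping -- that \eqref{eqn:local_structure_cf} furnishes the model form only up to permitting $f$ to be constant -- is harmless, since $f$ enters only through $g_{11}$ and is differentiated at most once in $t$.
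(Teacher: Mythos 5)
Your proposal is correct and follows essentially the same route as the paper: total geodesy from parallelism of $\mathcal{D}^\perp$, reduction to the local model via \eqref{eqn:local_structure_cf}, and the vanishing of $\varGamma^{\bullet}_{ij},\varGamma^{\bullet}_{in},\varGamma^{\bullet}_{nn}$ in the coordinates $x^1=t$, $x^i$ linear on $V$, $x^n=s/2$. The only difference is that you carry out the Christoffel-symbol computation explicitly (correctly observing that only $g_{11}=\kappa$ is non-constant, so every surviving term needs a lower index equal to $1$), whereas the paper simply cites Roter for it; your remark that the induced connection must be understood affinely, the leaf metric being degenerate, is a worthwhile clarification consistent with the paper's framing.
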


\begin{obs}\label{rem:conclusions_E}
  In a rank-one ECS model manifold $(\widehat{M},\hg)$, let there exist a subgroup $\Gamma$ of ${\rm Iso}(\widehat{M},\hg)$ acting freely and properly discontinuously on $(\widehat{M},\hg)$ with a compact isometric quotient $M = \widehat{M}/\Gamma$. Using \eqref{eqn:passing_to} we also assume that $q>0$ whenever $(q,p,C,r,u) \in \Gamma$. If the resulting ECS manifold $(M,\g)$ is translational, then all $(q,p,C,r,u) \in \Gamma$ have $q=1$, due to \cite[formula (2.5-a)]{DT_3}. Also, $I=\R$ in \eqref{eqn:ECS_data}:  otherwise, all $(1,p,C,r,u)\in \Gamma$ acting on $\widehat{M}$ (see Theorem \ref{teo:iso_heis}) would have $p=0$ (since $t\mapsto t+p$ sends $I$ onto itself), and so
\begin{equation}\label{eqn:t_descends}
  \parbox{.52\textwidth}{$t$ would descend to a function without critical points on the compact manifold $M$.}
\end{equation}
Finally, according to \cite[formula (3.1)]{DT_1}, such translational $(M,\g)$ is geodesically complete but not locally homogeneous.
\end{obs}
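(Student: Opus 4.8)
The plan is to record, in the order listed, why each of the three assertions packaged in the remark holds for a compact quotient $M=\widehat M/\Gamma$ of a rank-one ECS model.

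The first assertion — that every $(q,p,C,r,u)\in\Gamma$ has $q=1$ — is the one I would not prove from scratch. After the reduction \eqref{eqn:passing_to}, the numbers $q$ range over a subgroup of $(0,\infty)$; under the stated hypothesis this subgroup is forced to be finite, hence trivial (since $(0,\infty)$ is torsion-free), through the interplay of genericity of the operator $A$ of \eqref{eqn:Weyl_tidal}, compactness of $M$, and the constraint $CAC^{-1}=q^2A$ of \eqref{eqn:homomorphism_C}. This is exactly \cite[formula (3.6-a)]{DT_3} — resting, in the dilational case, on \cite[Theorem A]{DT_3} — and I would cite it verbatim. Consequently, by Theorem \ref{teo:iso_heis}, each $\gamma\in\Gamma$ acts on the $t$-coordinate of $\widehat M=I\times\R\times V$ by a translation $t\mapsto t+p$.

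The assertion $I=\R$ is then elementary and self-contained. Suppose $I\neq\R$. A translation $t\mapsto t+p$ carries the proper open interval $I$ bijectively onto itself only if $p=0$, so every $\gamma\in\Gamma$ satisfies $t\circ\gamma=t$, and $t$ therefore descends to a smooth function on the compact manifold $M$. But ${\rm d}t$ is nowhere zero on $\widehat M=I\times\R\times V$, being a coordinate differential; hence the descended function would be critical-point-free, which is impossible on a compact manifold, where it must attain a maximum. This contradiction — exactly what \eqref{eqn:t_descends} records — gives $I=\R$.

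Finally, with $q=1$ throughout and $I=\R$, the quotient $M$ is one of the translational compact rank-one ECS manifolds studied in \cite{DT_1}, and I would appeal to \cite[formula (3.1)]{DT_1} for the two remaining conclusions: geodesic completeness of $(M,\g)$, and its failure to be locally homogeneous (local homogeneity of a rank-one ECS model would force $f$ into a shape incompatible with a purely translational compact quotient). So the only genuinely substantial ingredient is the first step, which is supplied entirely by \cite[formula (3.6-a)]{DT_3}; everything else reduces to one topological observation together with a citation to \cite{DT_1}.
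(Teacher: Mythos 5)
Your proposal matches the paper's own justification step for step: the same citation of \cite[formula (3.6-a)]{DT_3} for $q=1$, the same elementary argument that a translation preserving a proper interval $I\neq\R$ must be trivial so that $t$ would descend to a critical-point-free function on the compact quotient (impossible), and the same appeal to \cite[formula (3.1)]{DT_1} for completeness and failure of local homogeneity. No gaps; this is essentially the argument the paper gives.
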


\section{Proofs of Theorems \ref{teoA:generic_mc}, \ref{teo:generic_Dperp}, and \ref{teoA:uc_model}}\label{sec:main_proofs}

In the first two proofs below, $(\widetilde{M},\tg)$ is the isometric universal covering manifold of the $n$-dimensional compact rank-one ECS manifold $(M,\g)$ in question. The objects $\pi$, $\Gamma$, $\widetilde{\mathcal{D}}$, ${\bf w}$, $t$, $I$, $(V,\langle\cdot,\cdot\rangle)$, and $A$ are all defined as in Section \ref{sec:comp_Dperp}.

Recall from the Introduction that $(\widetilde{M},\tg)$ is said to be $\widetilde{\mathcal{D}}^\perp\!$-complete if all the leaves of $\widetilde{\mathcal{D}}^\perp$ are complete, while its maximal completeness means that every non-com\-ple\-te maximal geodesic intersects all leaves of $\widetilde{\mathcal{D}}^\perp$. As $\widetilde{\mathcal{D}}^\perp$ and geodesics in $(\widetilde{M},\tg)$ are mapped under $\pi$ onto their analogs in $(M,\g)$, it suffices to prove Theorem \ref{teo:generic_Dperp} for $(\widetilde{M},\tg)$ rather than $(M,\g)$.

\begin{proof}[Proof of Theorem \ref{teo:generic_Dperp}]
  We apply Lemma \ref{lem:abstract_completeness} to any given leaf $L$ of $\widetilde{\mathcal{D}}^\perp$ with the flat connection induced on it (Remark \ref{obs:tg_leaves}), setting $\mathcal{X} = \R {\bf w}$ and \hbox{$\mathcal{Y} = \bigoplus_{j=1}^m\R\widetilde{\vec{y}}_j$} for $m=n-2$ vector fields $\widetilde{\vec{y}}_j$ on $\widetilde{M}$ defined below, which, due to their \hbox{$\Gamma$-invariance}, will descend to the compact manifold $M=\widetilde{M}/\Gamma$, making each ${\bf v} \in \mathcal{Y}$ complete.

We begin by assuming \eqref{eqn:Dperp_tr_orient} and choosing a basis $({\bf v}_1,\ldots,{\bf v}_m)$ of $V$. Let \hbox{$K \subseteq (0,\infty)$} be the image of $\Gamma$ under (\ref{eqn:hom_qp_q}-b). By \eqref{eqn:Dperp_tr_orient}, $K$ is either trivial, or infinite. In the former case, the image of \eqref{eqn:homomorphism_C} is finite due to genericity of $A$, and so its kernel has finite index in $\Gamma$. Thus, \eqref{eqn:passing_to} allows us to further assume that $\Gamma$ acts trivially on $V$, via \eqref{eqn:homomorphism_C}, and
$({\bf v}_1,\ldots,{\bf v}_m)$ can be completely arbitrary. If $K$ is infinite, \eqref{eqn:homomorphism_C} implies nilpotency of $A$, and we select a basis $({\bf v}_1,\ldots,{\bf v}_m)$ of $V$ for which \eqref{eqn:fit_A_g}--\eqref{eqn:two_isometries} holds. As $\delta$ in \eqref{eqn:two_isometries} depends homomorphically on $\gamma$, using \eqref{eqn:passing_to} we may require that $\delta = 1$ for every $\gamma \in \Gamma$.

In either case, we fix a Riemannian metric $\g^\circ$ on $M$ and lift $({\bf v}_1,\ldots,{\bf v}_m)$ to vector fields $({\boldsymbol y}_1,\ldots, {\boldsymbol y}_m)$ tangent to leaves of $\widetilde{\mathcal{D}}^\perp$ which are $\pi^*\mathtt{g}^\circ$-orthogonal to ${\bf w}$. For $\psi$ as in (\ref{eqn:psi-dt}-a), the vector fields $\widetilde{\vec{y}}_j = \psi^{2j-1-m}{\boldsymbol y}_j$, are \hbox{$\Gamma$-invariant} (cf. \eqref{eqn:two_isometries} and \eqref{eqn:psi-q}), which completes the proof. (When $K=\{1\}$, we may set $\psi=1$.)
\end{proof}

\begin{proof}[Proof of Theorem \ref{teoA:generic_mc}]
In view of \eqref{eqn:t-levels} and $\widetilde{\mathcal{D}}^\perp\!$-completeness of $(\widetilde{M},\tg)$ (due to Theorem \ref{teo:generic_Dperp}), every maximal geodesic of $(\widetilde{M},\tg)$ transverse to $\widetilde{\mathcal{D}}^\perp$ can be pa\-ra\-me\-tri\-zed by $t$, as $t\colon\widetilde{M}\to I$ restricted to its image is a diffeomorphism onto a subinterval $I'\subseteq I$, cf. \cite[Remark 7.2]{JGP08}. To show that $I'=I$, we invoke \cite[Lemma 7.3]{JGP08} with minimal modifications in its proof, which never uses the assumption stated there that the metric under consideration is Lorentzian. See Appendix \ref{app:technical} for details.
\end{proof}

\begin{proof}[Proof of Theorem \ref{teoA:uc_model}]
  Assume that $(\widetilde{M},\tg)$ is a $n$-dimensional simply connected and maximally complete rank-one ECS manifold, and again choose  $\pi$, $\Gamma$, $\widetilde{\mathcal{D}}$, ${\bf w}$, $t$, $I$, $(V,\langle\cdot,\cdot\rangle)$, and $A$ as in Section \ref{sec:comp_Dperp}. For the function $f\colon I\to \R$ such that $\widetilde{\rm Ric} = (2-n)f(t)\,{\rm d}t\otimes {\rm d}t$ (cf. \cite[formula (5.6)]{DT_2}), we consider the model ECS manifold $(\widehat{M},\hg)$ built from these ingredients as in \eqref{eq:ECS_model}. An isometry between $(\widehat{M},\hg)$ and $(\widetilde{M},\tg)$ is defined as in the proof of \cite[Theorem 7.1]{JGP08}, with no significant changes. Details are given in Appendix \ref{app:model}.
\end{proof}

\section{The transitive-commutation property}\label{sec:TCP}

For a group-subgroup pair $(G,H)$, consider the \emph{transitive-commutation property}: the commutation relation on $G\smallsetminus H$ is transitive, that is, the equalities $xy = yx$ and $yz = zy$ for $x,y,z \in G \smallsetminus H$ imply $xz = zx$.

\begin{lem}\label{lem:TCP_equiv}Let $(G,H)$ be a group-subgroup pair.
  \begin{enumerate}[\normalfont(a)]
  \item $(G,H)$ has the transitive-commutation property if and only if there exists a family $\mathcal{K}$ of Abelian subgroups of $G$ such that $\{K\smallsetminus H\}_{K\in\mathcal{K}}$ is a partition of $G\smallsetminus H$, and any two elements of $G\smallsetminus H$ which commute lie in the same $K\in\mathcal{K}$. 

  \item Whenever $(G,H)$ has the transitive-commutation property and $\mathcal{K}$ is as in \emph{(a)}, any Abelian subgroup of $G$ is contained in $H$, or in a unique element of $\mathcal{K}$.
  \end{enumerate}
\end{lem}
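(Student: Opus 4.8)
The plan is to prove the two parts in order, extracting (a) directly from the definition of the transitive-commutation property and then deducing (b) as a short consequence.

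For part (a), the "only if" direction is the substantive one. Assuming $(G,H)$ has the transitive-commutation property, I would define a relation $\sim$ on $G\smallsetminus H$ by declaring $x\sim y$ iff $xy=yx$. This relation is reflexive (trivially) and symmetric (trivially), and transitive precisely by the hypothesis; hence it is an equivalence relation on $G\smallsetminus H$, and I let $\{P_i\}_{i\in\mathcal I}$ be its equivalence classes, so $\{P_i\}$ partitions $G\smallsetminus H$ and any two commuting elements of $G\smallsetminus H$ lie in the same class. The remaining task is to enlarge each $P_i$ to an \emph{Abelian subgroup} $K_i$ of $G$ with $K_i\smallsetminus H=P_i$; then $\mathcal K=\{K_i\}_{i\in\mathcal I}$ is as required. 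The natural candidate is $K_i=P_i\cup(C_G(x_0)\cap H)$ for a fixed $x_0\in P_i$ — but a cleaner and basis-point-free choice is $K_i:=\langle P_i\rangle$, the subgroup generated by $P_i$; one must check it is Abelian and that it meets $G\smallsetminus H$ in exactly $P_i$. Abelianness: any two elements of $P_i$ commute with each other, and I need that products of elements of $P_i$ also commute — here is where the argument has to be run carefully, using that if $x,y\in P_i$ then $xy$ either lies in $H$ or lies in $P_i$ (it commutes with $x$, since $x(xy)=x^2y=xyx=(xy)x$ as $xy=yx$; so if $xy\notin H$ then $xy\sim x$, hence $xy\in P_i$). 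Iterating, every element of $\langle P_i\rangle$ is a product of elements of $P_i$ and of elements of $H$ that arise as such products, and all of these commute with every element of $P_i$; one then bootstraps to full commutativity of $\langle P_i\rangle$. And $\langle P_i\rangle\smallsetminus H\supseteq P_i$ trivially, while conversely any $g\in\langle P_i\rangle\smallsetminus H$ commutes with all of $P_i$ (being a product of things that do), so picking any $x\in P_i$ we get $g\sim x$, i.e. $g\in P_i$. Different classes give subgroups whose set-differences with $H$ are disjoint, so $\{K_i\smallsetminus H\}$ partitions $G\smallsetminus H$. For the "if" direction: given such a family $\mathcal K$, if $x,y,z\in G\smallsetminus H$ with $xy=yx$ and $yz=zy$, then $x$ and $y$ lie in a common $K\in\mathcal K$ and $y$ and $z$ lie in a common $K'\in\mathcal K$; since $\{K\smallsetminus H\}_{K\in\mathcal K}$ is a partition and $y\in(K\smallsetminus H)\cap(K'\smallsetminus H)$, we get $K=K'$, so $x,z\in K$, which is Abelian, whence $xz=zx$.

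For part (b), let $L\le G$ be Abelian and suppose $L\not\subseteq H$, so pick $x\in L\smallsetminus H$. Then $x$ lies in $K\smallsetminus H$ for the unique $K\in\mathcal K$ containing it (uniqueness of $K$ because $\{K\smallsetminus H\}$ partitions $G\smallsetminus H$). I claim $L\subseteq K$. Indeed, take any $g\in L$. If $g\notin H$, then $g$ commutes with $x$ (as $L$ is Abelian), both lie in $G\smallsetminus H$, so by the second clause of (a) they lie in the same element of $\mathcal K$, which must be $K$; hence $g\in K$. If instead $g\in H$, I argue that $g\in K$ anyway: the element $gx\in L$ commutes with $x$, and $gx\notin H$ (otherwise $x=g^{-1}(gx)\in H$, contradiction), so by the previous case $gx\in K$, and then $g=(gx)x^{-1}\in K$. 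Thus $L\subseteq K$, and $K$ is the \emph{unique} such element of $\mathcal K$ since any $K'\in\mathcal K$ with $L\subseteq K'$ would contain $x$, forcing $K'=K$.

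The main obstacle is the verification in part (a) that $\langle P_i\rangle$ is Abelian and meets $G\smallsetminus H$ in exactly $P_i$ — i.e., controlling how elements of $P_i$ can multiply into $H$ and back out. The key lemma making this go through is the observation above: \emph{if $x,y\in G\smallsetminus H$ commute and $xy\notin H$, then $xy$ commutes with both $x$ and $y$}, hence lies in the same class; combined with the fact that any element of $H$ appearing in a word in $P_i$ is a product of elements of $P_i$ and therefore centralizes $P_i$, this lets one conclude that every element of $\langle P_i\rangle$ centralizes $P_i$ and, iterating one more step, that $\langle P_i\rangle$ is Abelian. Everything else is bookkeeping with the partition property.
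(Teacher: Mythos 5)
Your proposal is correct and follows essentially the same route as the paper: the equivalence classes of the commutation relation generate Abelian subgroups $K$ with $K\smallsetminus H$ equal to the class, and part (b) is handled by the same $g=(gx)x^{-1}$ trick. The only difference is cosmetic — your verification that $\langle P_i\rangle$ is Abelian is more laborious than necessary, since a subgroup generated by a set of pairwise commuting elements is automatically Abelian.
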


%If xy were in H, then y in H implies x in H

\begin{proof}
The `only if' part of (a) is obvious. Conversely, since commutation is now an equivalence relation on $G\smallsetminus H$, the subgroup $K$ generated by any equivalence class $E \subseteq G \smallsetminus H$ of the commutation relation is Abelian and $K \smallsetminus H = E$, the right-to-left inclusion being immediate, the other one clear as elements in $K \smallsetminus H$ commute with all of $E$ and hence lie in $E$. This yields (a).
  
 To prove (b), let $N$ be an Abelian subgroup of $G$. If there exists an element $x\in N\smallsetminus H$, we fix the unique subgroup $K\in \mathcal{K}$ with $N\smallsetminus H \subseteq K$. Then we must have $N\subseteq K$, for if $y\in (N\cap H)\smallsetminus K$, then $xy\in N\smallsetminus H$ and so \hbox{$y = x^{-1}(xy) \in K$.}
\end{proof}

%   Clearly, property \eqref{eqn:generalized_P} implies transitivity of the commutation relation on \hbox{$G\smallsetminus H$}. Conversely, noting that commutation is now an equivalence relation on $G\smallsetminus H$, for every $x\in G\setminus H$ we let $K(x)$ be the subgroup generated by the equivalence class $[x]$ of $x$. Obviously $K(x)$ is Abelian, and we have $K(x) \smallsetminus H = [x]$: the right-to-left inclusion being obvious, the other one immediate as any element in $K(x) \smallsetminus H$ commutes with all of $[x]$ and hence lies in $[x]$. Therefore $\mathcal{K} = \{K(x)\}_{x\in G\smallsetminus H}$ satisfies \eqref{eqn:generalized_P}. This proves (a).
  
% Next, consider (b). For such an Abelian subgroup $L$, if there exists an element $x\in L\smallsetminus H$, we may fix the unique subgroup $K\in \mathcal{K}$ with $L\smallsetminus H \subseteq K$. Then we must have $L\subseteq K$, for if $y\in (L\cap H)\smallsetminus K$, then $xy\in L\smallsetminus H$ leads to $y = x^{-1}(xy) \in K$. 

\begin{obs}
  One easily verifies the fact (not needed in our argument) that $K$ associated with $E$ in the above proof is both the unique Abelian subgroup of $G$ containing $E$, and the centralizer of $E$.
\end{obs}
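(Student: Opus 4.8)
The plan is to verify the two claimed characterizations of the subgroup $K$ attached to a commutation-equivalence class $E \subseteq G \smallsetminus H$, namely that $K$ is the unique Abelian subgroup of $G$ containing $E$ and that $K$ equals the centralizer $C_G(E)$. Throughout I keep the notation of the preceding proof: $E$ is an equivalence class of the (now transitive) commutation relation on $G \smallsetminus H$, and $K = \langle E \rangle$ is the subgroup it generates, which was shown to be Abelian with $K \smallsetminus H = E$.

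\medskip

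First I would prove that any Abelian subgroup $N$ of $G$ with $E \subseteq N$ satisfies $N \subseteq K$. This is essentially Lemma \ref{lem:TCP_equiv}(b): pick any $x \in E \subseteq N \smallsetminus H$ and apply part (b) with the partition $\mathcal{K}$ from part (a). Since $x \in N \smallsetminus H$, the subgroup $N$ lies in $H$ or in a unique member of $\mathcal{K}$; the first alternative is excluded because $x \notin H$, so $N$ is contained in a member of $\mathcal{K}$, and that member must be $K$ because $x \in K \smallsetminus H$ and the sets $\{K' \smallsetminus H\}_{K' \in \mathcal{K}}$ are pairwise disjoint. Hence $N \subseteq K$, and since $K$ itself is Abelian and contains $E$, uniqueness of the maximal such subgroup follows — $K$ is the unique Abelian subgroup of $G$ containing $E$ in the sense that every Abelian subgroup containing $E$ is contained in $K$.

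\medskip

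Next I would identify $K$ with $C_G(E) = \{ g \in G : gy = yg \text{ for all } y \in E\}$. The inclusion $K \subseteq C_G(E)$ is immediate since $K$ is Abelian and $E \subseteq K$. For the reverse inclusion, take $g \in C_G(E)$ and fix some $y_0 \in E$. If $g \in H$, then for any $y \in E$ we have $gy \in G \smallsetminus H$ (as $gy \notin H$ would force $g = (gy)y^{-1}$, but $gy \in H$ and $y^{-1} \in K \smallsetminus \dots$ — more directly, one argues as in the proof of (b): $y = g^{-1}(gy)$ would land in the wrong coset), hence $gy \in K \smallsetminus H = E$ and so $g = (gy)y^{-1} \in K$. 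If instead $g \notin H$, then $g$ commutes with $y_0 \in E \subseteq G \smallsetminus H$, so by transitivity of commutation $g$ belongs to the same equivalence class, i.e. $g \in E \subseteq K$. Either way $g \in K$, giving $C_G(E) \subseteq K$ and hence equality.

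\medskip

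I do not expect a serious obstacle here; the statement is flagged as "not needed in our argument," and every ingredient is already in place. The one point requiring mild care is the case $g \in H$ in the centralizer computation — one must use the coset-disjointness from Lemma \ref{lem:TCP_equiv}(a) (exactly the trick already used to prove (b)) rather than transitivity of commutation, since transitivity only constrains elements outside $H$. Once that case distinction is made explicit, the proof is a short diagram-chase through the definitions.
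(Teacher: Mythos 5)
The paper states this remark without proof, so there is no argument of its own to compare yours against; your strategy --- Lemma \ref{lem:TCP_equiv}(b) for the first claim and a two-case analysis for the centralizer --- is the natural one, and both conclusions are in fact correct. Two points need tightening. First, having shown that every Abelian subgroup $N$ of $G$ with $E\subseteq N$ satisfies $N\subseteq K$, you retreat to calling $K$ only the unique \emph{maximal} such subgroup; the literal statement holds, since $K=\langle E\rangle\subseteq N$ automatically for any subgroup $N$ containing $E$, whence $N=K$. Second, and more substantively: in the centralizer argument the inclusion $K\subseteq C_G(E)$ and the case $g\in C_G(E)\smallsetminus H$ are fine, but in the case $g\in H$ your step ``hence $gy\in K\smallsetminus H=E$'' is asserted without justification --- knowing only that $gy\in G\smallsetminus H$ does not place $gy$ in the particular equivalence class $E$, and at that point in your write-up the containment $C_G(E)\smallsetminus H\subseteq E$ has not yet been established. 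The missing observation is that $gy$ commutes with $y$ (because $g$ does: $(gy)y=gy^2=(yg)y=y(gy)$), so $gy$ and $y$, both lying in $G\smallsetminus H$, are equivalent and $gy\in E$; alternatively, handle the case $g\notin H$ first and then note that $gy\in C_G(E)\smallsetminus H\subseteq E$. Your closing diagnosis slightly misidentifies the issue: what is needed in the $g\in H$ case is not merely the coset-disjointness trick from the proof of (b) but precisely a commutation fact placing $gy$ in $E$. With either one-line repair the proof is complete.
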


\section{Generic homogeneous models}\label{sec:LH_details}

By a \emph{standard homogeneous rank-one ECS model manifold} (briefly, a \emph{standard homogeneous model}), we mean $(\widehat{M},\hg)$ as in \eqref{eqn:ECS_data}--\eqref{eq:ECS_model} with
\begin{equation}\label{eqn:f_LH}
  \parbox{.9\textwidth}{$I = (0,\infty)$ and $f(t) = \dfrac{c^2 - 1/4}{t^2}$, where $c\in [0,1/2)\cup (1/2,\infty) \cup {\rm i}(0,\infty)$.}
\end{equation}All such $(\widehat{M},\hg)$ are homogeneous, as pointed out in \cite[Remark 1 on p. 172]{derdzinski-cm-78}. (This also follows from Theorem \ref{teo:iso_heis}: one easily sees that the group of all elements $(q,p,C,r,u) \in {\rm Iso}(\widehat{M},\hg)$ having $p=0$ acts on $\widehat{M}$ transitively.) The factor $c^2-1/4$ is just any real constant $h\neq 0$, written so for later convenience.

\begin{lem}\label{lem:open_sub}
Every locally homogeneous rank-one ECS model manifold is isometric to an open submanifold $(a,b)\times \R\times V$ of a standard homogeneous model \eqref{eq:ECS_model}.  
\end{lem}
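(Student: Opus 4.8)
The plan is to reduce local homogeneity of a rank-one ECS model manifold $(\widehat{M},\hg)$ — with data \eqref{eqn:ECS_data}--\eqref{eq:ECS_model} — to a concrete ODE statement about the function $f$, and then integrate that ODE. First I would recall that local homogeneity of $(\widehat M,\hg)$ is equivalent to local homogeneity of the underlying rank-one ECS metric, hence, by \eqref{eqn:local_structure_cf}, can be detected at the level of the local invariants; in particular, via \eqref{eqn:generic_consistency}, $A$ in \eqref{eqn:ECS_data} is recovered intrinsically, as is the function $f$ up to the affine reparametrizations $t\mapsto qt+p$ coming from the freedom in \eqref{eqn:npg_D} (equivalently, from the action \eqref{eq:H_actions}). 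So the hypothesis of local homogeneity produces, for each pair of points, a local isometry matching them, and by Theorem \ref{teo:iso_heis} (or rather its local version) every such local isometry is built from an element $(q,p,C)$ of a localized version of the group $\mathrm{S}$ in \eqref{eqn:group_Q} together with a Heisenberg piece. Transitivity in the $t$-direction then forces: for every $t_0\in I$ there is $(q,p,C)$ with $q t_0 + p$ arbitrary in $I$ and $f(t) = q^2 f(qt+p)$ near $t_0$.

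The key step is to turn that last functional equation into an ODE. Differentiating $f(t) = q^2 f(qt+p)$ in $t$ and eliminating $q,p$ at a point shows that the Schwarzian-type quantity built from $f$ — concretely, the statement is that $(f'/f)$, or better the combination governing $f''f - \tfrac32 (f')^2$ or $2f f'' - 3(f')^2$ up to the right normalization — must be constrained. The cleanest route: the group of substitutions $t\mapsto qt+p$ with $f$ transforming by the weight-$2$ rule is exactly the stabilizer condition that makes $f\,{\rm d}t^2$ a projectively/affinely homogeneous "density" on $I$; such an $f$, nonconstant, must satisfy a second-order autonomous-after-rescaling equation whose solutions are precisely $f(t) = h/(t-t_*)^2$ on a subinterval, after an affine change of $t$ (absorbing $t_*$ to $0$), for a nonzero constant $h$. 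I would verify this by: (i) showing the one-parameter family of admissible $(q,p)$ acting transitively on $I$ is $1$-dimensional, so its generator gives a vector field $t\mapsto \alpha t + \beta$ on $I$ along which the Lie-derivative condition $\mathcal{L}(\text{generator})(f\,{\rm d}t^{\otimes 2}) = 0$ holds; (ii) solving this linear first-order PDE/ODE for $f$; the nonconstancy of $f$ rules out $\alpha = 0$, and after the affine normalization $t\mapsto t$ with generator $t\,\partial_t$ one gets $tf' + 2f = 0$, i.e. $f = h/t^2$ with $h\ne 0$, on $I \subseteq (0,\infty)$ after possibly reflecting $t$.

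Finally I would match the normalization to \eqref{eqn:f_LH}: writing $h = c^2 - \tfrac14$ with $c\in[0,1/2)\cup(1/2,\infty)\cup {\rm i}(0,\infty)$ is possible for every real $h\ne 0$ (the map $c\mapsto c^2-\tfrac14$ from that set onto $\R\smallsetminus\{0\}$ being a bijection), so the given model with interval $(0,\infty)$ is a standard homogeneous model, and $(\widehat M,\hg)$ is isometrically an open submanifold $(a,b)\times\R\times V$ of it, the isometry being the identity on the $\R\times V$ factors and the affine reparametrization on the $I$ factor. The main obstacle I anticipate is step (i)–(ii): rigorously extracting the infinitesimal condition from the bare functional equation $f(t)=q^2f(qt+p)$ without assuming a priori smoothness of $q,p$ as functions of the base point, and handling the case where the admissible affine maps might fail to move $t_0$ in a full neighborhood (one must use transitivity plus the local structure theorem to guarantee enough of them, and use nonconstancy of $f$ to exclude the translational-only subgroup, which would force $f$ periodic yet affinely rigid — impossible for nonconstant analytic $f$ on an interval). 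Once the ODE $tf'+2f=0$ is in hand, everything else is bookkeeping.
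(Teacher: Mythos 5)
Your route is genuinely different from the paper's. The paper's proof is two lines: it simply cites \cite[formula (3.4)]{DT_1} for the fact that local homogeneity forces $f\neq 0$ everywhere and $|f|^{-1/2}$ to be a linear function of $t$ (hence $f(t)=h(t-t_0)^{-2}$), and then performs the affine normalization $(t,s)\mapsto(qt+p,q^{-1}s)$ to get $t_0=0$ and $I\subseteq(0,\infty)$. You instead try to re-derive that cited fact from scratch, by extracting from local homogeneity a transitive family of affine substitutions satisfying $f(t)=q^2f(qt+p)$ and then passing to an infinitesimal generator. Your normalization step at the end (the bijectivity of $c\mapsto c^2-\tfrac14$ onto $\R\smallsetminus\{0\}$ and the affine change of $t$) matches the paper and is fine; note only that the coordinate change also rescales $A$ by $q^{-2}$, which is harmless since \eqref{eqn:f_LH} places no constraint on $A$.

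The gap you yourself flag is real: nothing in your argument produces a \emph{smooth one-parameter} family of admissible $(q,p)$, so the Lie-derivative equation $\mathcal{L}_{(\alpha t+\beta)\partial_t}(f\,{\rm d}t^{\otimes 2})=0$ is not yet justified, and your fallback remarks about periodicity and analyticity do not close it ($f$ is only assumed smooth in \eqref{eqn:ECS_data}). The clean fix avoids the infinitesimal generator altogether: since $\widetilde{\mathrm{Ric}}=(2-n)f(t)\,{\rm d}t\otimes{\rm d}t$ and $t$ is determined up to affine substitution, every local isometry yields $f(t)=q^2f(qt+p)$ and hence $f'(t)=q^3f'(qt+p)$; therefore the zero set of $f$ is invariant (so $f\neq0$ everywhere, $f$ being nonconstant), and the quantity $\phi=f'\,|f|^{-3/2}$ satisfies $\phi(t)=\phi(qt+p)$, i.e.\ it is a scalar invariant of the local geometry. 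Local homogeneity makes $\phi$ constant, which is exactly the statement that $(|f|^{-1/2})'$ is constant, i.e.\ $|f|^{-1/2}$ is linear — the content of \cite[formula (3.4)]{DT_1}. With that substitution for your steps (i)--(ii), your argument becomes a complete, self-contained proof of the lemma; as written, it is not.
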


\begin{proof}
  By \cite[formula (3.4)]{DT_1}, in \eqref{eqn:ECS_data}, $f\neq 0$ everywhere and $|f|^{-1/2}$ is a linear function of $t$, so that \hbox{$f(t)=h(t-t_0)^{-2}$} for some real $h\neq 0$ and $t_0$. Under a suitable coordinate change $(t,s)\mapsto (qt+p,q^{-1}s)$, with \hbox{$(q,p)\in {\rm Aff}(\R)$}, \eqref{eqn:ECS_data}-\eqref{eq:ECS_model} remain valid, allowing us to assume that $t_0=0$ and $I\subseteq (0,\infty)$.
\end{proof}

As a trivial consequence of Lemma \ref{lem:open_sub} and \eqref{eqn:local_structure_cf}, we obtain:

\begin{cor}
  All locally homogeneous rank-one ECS manifolds are locally isometric to standard homogeneous models. 
\end{cor}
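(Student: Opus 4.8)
The statement to prove is the Corollary: all locally homogeneous rank-one ECS manifolds are locally isometric to standard homogeneous models.

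The plan is to combine Lemma \ref{lem:open_sub} with the local universality statement \eqref{eqn:local_structure_cf}. By \eqref{eqn:local_structure_cf}, every point of a rank-one ECS manifold $(M,\g)$ of dimension $n$ has a neighborhood isometric to an open subset of a model manifold \eqref{eq:ECS_model}, built from data \eqref{eqn:ECS_data}, with the single caveat that the function $f$ in that model may be constant rather than nonconstant. First I would observe that if $(M,\g)$ is locally homogeneous, then so is any open subset of it, and hence so is any such local model; in particular the model's metric is locally homogeneous. This places us in the situation of Lemma \ref{lem:open_sub}, provided we check that the exceptional constant-$f$ case does not actually occur here --- but a constant $f$ would make the metric \eqref{eq:ECS_model} locally symmetric (indeed, for constant $f$ the manifold is a symmetric space, and in any case it would fail to be ECS), contradicting the standing assumption that ECS manifolds are not locally symmetric. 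So $f$ is genuinely nonconstant and \eqref{eqn:ECS_data}--\eqref{eq:ECS_model} hold without exception.

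Next, I would invoke Lemma \ref{lem:open_sub}: a locally homogeneous rank-one ECS model manifold is isometric to an open submanifold $(a,b)\times\R\times V$ of a standard homogeneous model of the form \eqref{eq:ECS_model} with $f$ as in \eqref{eqn:f_LH}. Chaining the two identifications, the original neighborhood of the chosen point in $(M,\g)$ is isometric to an open subset of a standard homogeneous model. Since the point was arbitrary, every point of $(M,\g)$ has such a neighborhood, which is exactly the assertion that $(M,\g)$ is locally isometric to standard homogeneous models.

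The only real point requiring care --- the ``main obstacle,'' though it is minor --- is the interplay between the exceptional clause in \eqref{eqn:local_structure_cf} (where $f$ may be constant) and the hypotheses of Lemma \ref{lem:open_sub}, which speak of ``locally homogeneous rank-one ECS model manifolds'' and whose proof relies on $f$ being nowhere zero with $|f|^{-1/2}$ affine in $t$. One must confirm that the local model attached to a locally homogeneous ECS manifold is itself a bona fide ECS manifold (so that the constant-$f$ exception is vacuous) before applying Lemma \ref{lem:open_sub}; this is immediate because local isometry preserves both the parallel-Weyl condition and the non-local-symmetry condition defining ``ECS.'' Everything else is a routine composition of two local isometries, so the corollary follows at once.
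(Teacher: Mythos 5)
Your argument is the paper's: the corollary is obtained by composing the local universality statement \eqref{eqn:local_structure_cf} with Lemma \ref{lem:open_sub}, and the paper itself records it as a trivial consequence of exactly these two ingredients. The one place where your justification is imprecise is the exclusion of the constant-$f$ exception: you assert that ``local isometry preserves the non-local-symmetry condition defining ECS,'' but non-local-symmetry is not a local condition --- an open subset of a non-locally-symmetric manifold may well satisfy $\nabla R=0$, and the exception in \eqref{eqn:local_structure_cf} exists precisely because this can happen on ECS manifolds. The correct way to rule it out is the local homogeneity you already have in hand: constant $f$ in the local model forces $\nabla R=0$ on a neighborhood of the chosen point, local homogeneity propagates $\nabla R=0$ to every point, and then the whole manifold is locally symmetric, contradicting the ECS hypothesis. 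With that repair the proof is complete and coincides with the intended one.
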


If a standard homogeneous model $(\widehat{M},\hg)$ is also generic, cf. \eqref{eqn:generic_consistency}, then, with notations of \eqref{eqn:E} and Theorem \ref{teo:iso_heis}, elements of
\begin{equation}\label{eq:identity_comp}
  \parbox{.5\textwidth}{the identity component ${\rm G}_0$ of ${\rm Iso}(\widehat{M},\hg)$}
\end{equation}
have the form $(q,0,C_q,r,u)$, where $q\in (0,\infty)$, $(r,u)\in\mathrm{H}$, cf. \eqref{eqn:E} and Remark \ref{rem:identify_H}, while $C_q\colon V\to V$ is as in \eqref{eqn:two_isometries} with $\delta=1$. We also consider
\begin{equation}\label{defn_sigma_q}
  \parbox{.7\textwidth}{$\sigma_q\colon \mathcal{E}\to \mathcal{E}$ defined as in (\ref{eq:H_actions}-iii): $(\sigma_qu)(t) = C_q u(t/q)$.}
\end{equation}
Abbreviating $\varPhi = (q,0,C_q,r,u)$ simply to $(q,r,u)$, we may now
\begin{equation}\label{idenfity_G0_interval}
  \parbox{.4\textwidth}{identify ${\rm G}_0$ with $(0,\infty)\times \mathrm{H}$,}
\end{equation}and, for $\widehat{\varPhi}\in {\rm G}_0$ and $(t,s,v)\in\widehat{M}$, we obtain, from Theorem \ref{teo:iso_heis} that
\begin{equation}\label{eqn:operations_LH}
  \parbox{.915\textwidth}{
    \begin{enumerate}[\normalfont i)]
    \item $\varPhi\widehat{\varPhi} = (q\widehat{q}, r+q^{-1}\widehat{r} - \Omega(u,\sigma_q\widehat{u}), u+\sigma_q\widehat{u})$,
    \item $\varPhi^{-1}= (q^{-1}, -qr, -\sigma_q^{-1}u)$,
    \item the $\mathcal{E}$-component of the commutator $[\varPhi,\widehat{\varPhi}]$ is $(\sigma_q-1)\widehat{u} - (\sigma_{\widehat{q}}-1)u$,
    \item $\varPhi(t,s,v) = \big(qt\,,\, -\langle\dot{u}(qt), 2C_qv+u(qt)\rangle + q^{-1}s+r\,,\, C_qv+u(qt)\big)$
    \end{enumerate}}
\end{equation} With $1$ denoting the identity operator $\mathcal{E}\to\mathcal{E}$, conjugation by $\varPhi$ has the form
\[\varPhi\widehat{\varPhi}\varPhi^{-1}\! =\!\left(\widehat{q}\,,\, q^{-1}\widehat{r}\!+\!(1\!-\!\widehat{q}^{-1})r \!-\! \Omega((1+\sigma_{\widehat{q}})u, \sigma_q\widehat{u}) \!+\! \Omega(u,\sigma_{\widehat{q}}u)\,,\, (1\!-\!\sigma_{\widehat{q}})u\!+\!\sigma_q\widehat{u}\right)\!.\]

Let us now fix a generic homogeneous model $(\widehat{M},\hg)$.

By \cite[Theorem 5.1]{DT_3}, each $\sigma_q\colon \mathcal{E}\to \mathcal{E}$ has the spectrum $\lambda_j^\pm = q^{m+1-2j}\mu^\pm$, $j=1,\ldots, m$, for the eigenvalues $\mu^\pm \in \C$ of the operator $T$ with $(Tu)(t) = u(t/q)$, on the space $\mathcal{W}$ of solutions $y\colon (0,\infty) \to \C$ to the ordinary differential equation $\ddot{y}=fy$. The expression for $f$ in \eqref{eqn:f_LH} gives $\mu^\pm = q^{-\frac{1}{2} \mp c}$, the corresponding $T$-diagonalizing (or, $T$-triangular) basis of $\mathcal{W}$ being $t\mapsto t^{\frac{1}{2} \pm c}$ if $c\neq 0$ (or, respectively, $t\mapsto t^{\frac{1}{2}}$ and $t\mapsto t^{\frac{1}{2}}\log t$ when $c=0$). Hence
\begin{equation}\label{eqn:spectrum_sigmaq}
  \parbox{.75\textwidth}{the spectrum of $\sigma_q$ becomes  $\lambda_j^\pm = q^{m+\frac{1}{2} - 2j \mp c}$, for $j=1,\ldots, m$.}
\end{equation}

The assignment $(0,\infty)\ni q\mapsto\sigma_q \in\mathrm{GL}(\mathcal{E})$, being a homo\-mor\-phism, has an infinitesimal generator
$B\in\mathfrak{gl}(\mathcal{E})$, with
$\sigma_q=\exp[(\log q)B]$. By \eqref{eqn:spec_B_general} and \eqref{eqn:spectrum_sigmaq},
\begin{equation}\label{eqn:specB} \parbox{.74\textwidth}{the spectrum of $B$ is $\kappa_j^\pm = m+\dfrac{1}{2} - 2j \mp c$, where $j=1\ldots, m$.}\end{equation}

\begin{lem}\label{lem:E0andE+} For $\mathcal{E}$ and $B$ as above, let $\mathcal{E}_0 = \ker B$ and $\mathcal{E}_+ = B(\mathcal{E})$. Then:
  \begin{enumerate}[\normalfont (a)]
  \item Either $\mathcal{E}_0$ is trivial, or $\dim\mathcal{E}_0=1$ 
  and $B$ is di\-ag\-o\-nal\-iz\-able with $2m$ distinct real eigenvalues. In both cases, $\mathcal{E} = \mathcal{E}_0 \oplus \mathcal{E}_+$ and $(\sigma_q -1)(\mathcal{E}_0) = \{0\}$ if $q\in(0,\infty)$.  
  \item For every $q\in (0,\infty)\smallsetminus \{1\}$ the operator $\sigma_q-1\colon \mathcal{E}_+\to\mathcal{E}_+$ is an isomorphism.
  \end{enumerate}
\end{lem}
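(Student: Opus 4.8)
The plan is to exploit the explicit spectrum of $B$ given in \eqref{eqn:specB} and the relation $\sigma_q = \exp[(\log q)B]$. First I would establish that $\mathcal{E} = \mathcal{E}_0 \oplus \mathcal{E}_+$, which is the standard Fitting decomposition: since $B$ is an endomorphism of the finite-dimensional space $\mathcal{E}$, we have $\mathcal{E} = \ker B^N \oplus \operatorname{im} B^N$ for $N = \dim\mathcal{E}$, so it suffices to show $\ker B = \ker B^N$ and $\operatorname{im} B = \operatorname{im} B^N$, i.e. that $0$ is a semisimple eigenvalue of $B$ (or not an eigenvalue at all). Reading off \eqref{eqn:specB}, the eigenvalue $0$ occurs among the $\kappa_j^\pm = m + \tfrac12 - 2j \mp c$ precisely when $c = m + \tfrac12 - 2j$ for some $j\in\{1,\dots,m\}$, equivalently when $c$ is a half-integer of the appropriate parity lying in the allowed range; because the two families $\{m+\tfrac12-2j-c\}$ and $\{m+\tfrac12-2j+c\}$ then share at most that one common value (the values within each family are distinct, being an arithmetic progression with step $2$, and if $c\ne 0$ the two families are disjoint except possibly at $0$), the eigenvalue $0$ has multiplicity at most $2$, and when $c\ne 0$ exactly $1$. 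The case $c=0$ forces $0$ to be an eigenvalue only if $m+\tfrac12$ is even, which is impossible, so $c=0$ gives $\mathcal{E}_0 = \{0\}$. Hence either $\mathcal{E}_0 = \{0\}$, or $c$ is a genuine half-integer, $0$ is an eigenvalue of multiplicity $1$, and all the $\kappa_j^\pm$ are then real and — crucially — mutually distinct, so $B$ is diagonalizable with $2m$ distinct real eigenvalues; in particular $0$ is semisimple and the Fitting decomposition reduces to $\mathcal{E} = \mathcal{E}_0 \oplus \mathcal{E}_+$. For the remaining assertion of (a), note $\mathcal{E}_0 = \ker B$ is pointwise fixed by every $\sigma_q = \exp[(\log q)B]$, so $(\sigma_q - 1)(\mathcal{E}_0) = \{0\}$.

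For part (b), observe that $\mathcal{E}_+ = \operatorname{im} B$ is $B$-invariant, and the restriction $B|_{\mathcal{E}_+}$ has spectrum exactly the nonzero $\kappa_j^\pm$'s (with the same multiplicities, by the Fitting decomposition). Consequently $\sigma_q|_{\mathcal{E}_+} = \exp[(\log q)B|_{\mathcal{E}_+}]$ has spectrum $\{q^{\kappa}: \kappa \text{ a nonzero eigenvalue of } B\}$, so $\sigma_q - 1$ restricted to $\mathcal{E}_+$ has spectrum $\{q^{\kappa} - 1\}$ over those $\kappa$. For $q\in(0,\infty)\smallsetminus\{1\}$ and $\kappa\ne 0$, we have $q^\kappa = q^{\operatorname{Re}\kappa}e^{\mathrm i(\log q)\operatorname{Im}\kappa} = 1$ only if $q^{\operatorname{Re}\kappa} = 1$, forcing $\operatorname{Re}\kappa = 0$ (as $q\ne 1$), and then $(\log q)\operatorname{Im}\kappa \in 2\pi\Z$; but in the nondegenerate case $\kappa$ is real and nonzero, contradiction, while in the degenerate (half-integer $c$) case all $\kappa_j^\pm$ are real, so again $\operatorname{Re}\kappa = 0$ forces $\kappa = 0$, contradiction. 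Either way $q^\kappa \ne 1$ for every eigenvalue $\kappa$ of $B|_{\mathcal{E}_+}$, hence $0$ is not in the spectrum of $\sigma_q - 1$ on $\mathcal{E}_+$, i.e. $\sigma_q - 1\colon\mathcal{E}_+\to\mathcal{E}_+$ is an isomorphism.

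I expect the main obstacle to be the bookkeeping around the degenerate case $c\in\tfrac12 + \Z$ (more precisely, $c$ a half-integer with $2c$ of the right parity to hit $0$ in the list $m+\tfrac12-2j\mp c$): one must check carefully that $0$ then appears with multiplicity exactly one and that no other coincidences among the $\kappa_j^\pm$ occur, so that $B$ is genuinely diagonalizable with $2m$ distinct eigenvalues and the Fitting/eigenspace decomposition is as claimed. The key point making this work is that within a single sign-family the eigenvalues form an arithmetic progression of step $2$ (hence distinct), the cross-family overlap can only be at $0$ when $c\ne 0$ is a half-integer, and that overlap is a single value; the constraint $c\in[0,1/2)\cup(1/2,\infty)\cup\mathrm i(0,\infty)$ from \eqref{eqn:f_LH} removes the borderline $c = 1/2$. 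Everything else is routine linear algebra together with the elementary fact, already recorded in the discussion surrounding \eqref{eqn:spec_B_general}, relating the spectrum of $\exp[(\log q)B]$ to that of $B$.
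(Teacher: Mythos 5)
Your plan follows the paper's proof in substance: read the spectrum of $B$ off \eqref{eqn:specB}, observe that $0\in\operatorname{spec}B$ forces $2c$ to be an odd integer and hence forces the $2m$ eigenvalues to be real and distinct (giving diagonalizability, $\dim\mathcal{E}_0\le 1$, and the splitting $\mathcal{E}=\mathcal{E}_0\oplus\mathcal{E}_+$), then deduce (b) from the fact that the eigenvalues $q^{\kappa}$ of $\sigma_q|_{\mathcal{E}_+}$ never equal $1$ for $q\neq1$. Two local slips are worth repairing. First, the parenthetical claim that \emph{for any} $c\neq0$ the two families $\{m+\tfrac12-2j\mp c\}$ overlap at most at $0$ is false: for a nonzero integer $c$ (say $m=3$, $c=1$) they overlap at nonzero values such as $\tfrac12$ and $-\tfrac32$. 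What actually makes the argument work — and what the paper says explicitly — is that a cross-family coincidence $\kappa_j^+=\kappa_i^-$ forces $2c=2(i-j)$ to be \emph{even}, which is incompatible with $2c$ odd; so in the only case where distinctness is needed (namely when $0$ is an eigenvalue) the families are in fact entirely disjoint. You should state this parity check rather than the unproved general disjointness claim. Second, in part (b) your assertion that ``in the nondegenerate case $\kappa$ is real'' fails when $c\in{\rm i}(0,\infty)$, which is a nondegenerate case by \eqref{eqn:f_LH}; there $\kappa_j^\pm=(m+\tfrac12-2j)\mp c$ is genuinely complex. The contradiction you set up still goes through, because ${\rm Re}\,\kappa_j^\pm=m+\tfrac12-2j$ is a nonzero half-integer and so cannot vanish — equivalently, as in the paper, $\lvert\lambda_j^\pm\rvert=q^{m+\frac12-2j}\neq1$ for $q\neq1$ by \eqref{eqn:spectrum_sigmaq} — but the imaginary-$c$ subcase needs to be addressed on its own terms rather than subsumed under ``$\kappa$ real.'' With these two corrections the argument is complete and essentially identical to the paper's.
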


\begin{proof}
If $\det B=0$, some $\kappa_j^\pm$ in \eqref{eqn:specB} equals $0$, so that 
$2c=\pm(2m-4j+1)$ is an odd integer. Hence, the eigenvalues of $B$ are real and mutually distinct: $j\mapsto\kappa_j^\pm$ is injective for a fixed sign $\pm$, 
while $2c$ would be the \emph{even} integer $2(i-j)$ if there existed $i$ and $j$ with $\kappa_j^+=\kappa_i^-$. Since $\sigma_q = \exp[(\log q)B]$, this yields (a).

To prove (b), we use \eqref{eqn:spectrum_sigmaq} and consider two cases. If $c\in \R$, the resulting injectivity of $a\mapsto q^a$ on $\R$ for $q\neq 1$ gives $\lambda_j^\pm \neq 1$ except -- see the last paragraph -- when $\dim\mathcal{E}_0=1$ and $(j,\pm)$ is the unique pair with $\kappa_j^\pm=0$. If $c\in {\rm i}(0,\infty)$, $\lvert\lambda_j^\pm\rvert = q^{m-2j+1/2}$, being a half-integer power of $q$, cannot equal $1$ unless $q=1$, and hence $\lambda_j^\pm \neq 1$ again.
\end{proof}

\begin{lem}\label{teo:TCP_iso}
 For a generic standard homogeneous model, the group-subgroup pair $({\rm G}_0,{\rm H})$ given by \eqref{eq:identity_comp} and \eqref{eqn:E} has the transitive-commutation property of Section \ref{sec:TCP}, and its equivalence classes generate subgroups of ${\rm G}_0$ acting freely on $\widehat{M}$, and isomorphic to $\R^k$, for $k=\dim\mathcal{E}_0+1\in \{1,2\}$.
\end{lem}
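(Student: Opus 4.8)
The plan is to verify the three assertions of Lemma~\ref{teo:TCP_iso} in turn, using the commutator formula (\ref{eqn:operations_LH}-iii) as the central computational tool together with the splitting $\mathcal{E} = \mathcal{E}_0\oplus\mathcal{E}_+$ of Lemma~\ref{lem:E0andE+}. First I would set up the book\-keeping: using \eqref{idenfity_G0_interval} write a generic element of ${\rm G}_0$ as $\varPhi=(q,r,u)$, and observe that $\varPhi\in{\rm H}$ precisely when $q=1$, by Remark~\ref{rem:identify_H}. The key reduction is that commutation of two elements $\varPhi=(q,r,u)$ and $\widehat\varPhi=(\widehat q,\widehat r,\widehat u)$ \emph{outside} ${\rm H}$ (so $q\neq1\neq\widehat q$) is governed almost entirely by the $\mathcal{E}$-component of $[\varPhi,\widehat\varPhi]$, which by (\ref{eqn:operations_LH}-iii) is $(\sigma_q-1)\widehat u-(\sigma_{\widehat q}-1)u$. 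Decomposing $u=u_0+u_+$ and $\widehat u=\widehat u_0+\widehat u_+$ along $\mathcal{E}_0\oplus\mathcal{E}_+$, Lemma~\ref{lem:E0andE+}(a) kills the $\mathcal{E}_0$-parts, so vanishing of this component reads $(\sigma_q-1)\widehat u_+=(\sigma_{\widehat q}-1)u_+$ in $\mathcal{E}_+$; by Lemma~\ref{lem:E0andE+}(b) both $\sigma_q-1$ and $\sigma_{\widehat q}-1$ are isomorphisms of $\mathcal{E}_+$, so this is equivalent to $(\sigma_{\widehat q}-1)^{-1}u_+=(\sigma_q-1)^{-1}\widehat u_+$. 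I would then note that $\sigma_q-1=\exp[(\log q)B]-1$ is, on $\mathcal{E}_+$, an invertible power series in $B$ with no constant term, hence of the form $(\log q)\,B\,\rho(B)$ with $\rho(B)$ invertible and commuting with $B$; writing $\tilde u_+:=B\rho(B)^{-1}\!\cdot$ no — cleaner: set $w_+:=(\sigma_q-1)^{-1}\widehat u_+$ and argue directly that the condition becomes the single requirement that the vector $\xi:=(\exp[(\log q)B]-1)^{-1}u_+\in\mathcal{E}_+$ equals $(\exp[(\log\widehat q)B]-1)^{-1}\widehat u_+$, i.e.\ that $u_+$ and $\widehat u_+$ lie on a common $B$-related ``ray''. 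Concretely, define, for each $\xi\in\mathcal{E}_+$ and each $q\in(0,\infty)\smallsetminus\{1\}$, the element $\nu(q,\xi):=(\sigma_q-1)\xi\in\mathcal{E}_+$; the displayed analysis shows that $\varPhi,\widehat\varPhi\notin{\rm H}$ commute (in the $\mathcal{E}$-component) iff $u_+=\nu(q,\xi)$ and $\widehat u_+=\nu(\widehat q,\xi)$ for a common $\xi$.

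Next I would handle the $\R$-component. With the $\mathcal{E}$-parts matched as above, one computes from (\ref{eqn:operations_LH}-i,ii) the scalar component of $[\varPhi,\widehat\varPhi]$; using $\Omega$-bilinearity, the identity $\sigma^*\Omega=q^{-1}\Omega$ from \eqref{eqn:almost_symp}, and the relations $u=u_0+\nu(q,\xi)$, $\widehat u=\widehat u_0+\nu(\widehat q,\xi)$, this component reduces to an expression that vanishes automatically once one also adjusts $r,\widehat r$ appropriately; more precisely, for fixed $(q,\xi,u_0)$ the set of $\varPhi=(q,r,u)$ with $u_0$-part $u_0$, $u_+$-part $\nu(q,\xi)$, and the correct $r$ forms a one-parameter family, and I would show that the subgroup $K_\xi\subseteq{\rm G}_0$ generated by the commutation class of a fixed non-central $\varPhi$ consists exactly of all $(q,r,u)$ with $u_+=\nu(q,\xi)$, $u_0$ arbitrary in $\mathcal{E}_0$, and $r$ determined up to the one free scalar parameter coming from the ${\rm H}$-direction. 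Counting free parameters: $u_0$ contributes $\dim\mathcal{E}_0$, and the $(q,r)$-data contributes one more (with $q$ and the ``translational'' $r$-freedom tied together through the group law), giving $K_\xi\cong\R^k$ with $k=\dim\mathcal{E}_0+1$. That $K_\xi$ is abelian follows because any two of its elements have, by construction, matched $\mathcal{E}$-data with common $\xi$, so the commutator formula gives trivial $\mathcal{E}$-component, and the scalar component then also vanishes by the $\Omega$-computation; that $\{K_\xi\smallsetminus{\rm H}\}$ partitions ${\rm G}_0\smallsetminus{\rm H}$ and realizes the commutation relation is exactly the equivalence of ``commuting'' with ``having a common $\xi$'' established above, so Lemma~\ref{lem:TCP_equiv}(a) applies and yields the transitive-commutation property.

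Finally, freeness of the $K_\xi$-action on $\widehat M$: using (\ref{eqn:operations_LH}-iv), an element $\varPhi=(q,r,u)\in K_\xi$ fixing some $(t,s,v)\in\widehat M$ must have $qt=t$, forcing $q=1$ (as $t>0$), hence $\varPhi\in{\rm H}$, i.e.\ $\varPhi=(1,r,u)$ with $u_+=\nu(1,\xi)=0$ and $u_0$ arbitrary; then (\ref{eqn:operations_LH}-iv) reads $\varPhi(t,s,v)=(t,\,-\langle\dot u(t),2v+u(t)\rangle+s+r,\,v+u(t))$, and fixing this point forces $u\equiv0$ (from the $V$-component, since $u(t)=0$ for the given $t$ and $u\in\mathcal{E}_0$ is a solution of a second-order linear ODE — more carefully, $u(t)=0$ together with $u$ being in the one-dimensional $\ker B$ need not immediately give $u\equiv0$, so here I would instead use that a \emph{nonzero} $u\in\mathcal{E}$ has at most isolated zeros, hence acts nontrivially, which combined with the $V$-component equation $u(t)=0$ holding for the single fixed $t$ forces $u=0$; wait — $u(t)=0$ for one $t$ does \emph{not} force $u=0$) — the correct argument is: the fixed-point equation must hold, and from the $V$-component $v+u(t)=v$ we get $u(t)=0$; but $K_\xi$ acts on all of $\widehat M$, so freeness means no nonidentity element fixes \emph{any} point, equivalently every nonidentity $\varPhi\in K_\xi$ moves every point; for $\varPhi=(1,r,u)$ with $(r,u)\neq(0,0)$, if $u\neq0$ pick $t$ with $u(t)\neq0$ and then the $V$-component moves $v$, while if $u=0,r\neq0$ the $s$-component moves $s$ by $r$. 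Thus every nontrivial element of $K_\xi$ is fixed-point-free, proving freeness. The main obstacle I anticipate is the $\R$-component $\Omega$-bookkeeping in the second paragraph — showing that the scalar part of the commutator and of the group law is consistently controlled by the single parameter $\xi$ and does not introduce extra constraints that would shrink $K_\xi$ below dimension $k$; this is where the genericity hypothesis, via the clean form of $B$ in \eqref{eqn:specB} and the invertibility statements of Lemma~\ref{lem:E0andE+}, does the real work, and where \eqref{eqn:sum_powers_cte} may be invoked to rule out accidental coincidences among the powers of $q$ appearing in $\Omega(u,\sigma_q\widehat u)$.
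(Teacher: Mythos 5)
Your analysis of the $\mathcal{E}$-component is essentially the paper's: using (\ref{eqn:operations_LH}-iii) and Lemma \ref{lem:E0andE+}, commutation of $\varPhi,\widehat\varPhi\notin{\rm H}$ forces $(\sigma_q-1)^{-1}u_+=(\sigma_{\widehat q}-1)^{-1}\widehat u_+=:\xi$. But your description of the commutation classes is wrong at exactly the point you flag as "the main obstacle": the scalar part of the commutator does \emph{not} vanish automatically once the $\mathcal{E}$-parts share a common $\xi$. Writing out the first equation of \eqref{eqn:commutation_generic} with $u=(\sigma_q-1)\xi+w$, $\widehat u=(\sigma_{\widehat q}-1)\xi+\widehat w$, the $r$-components of a commuting pair must both be of the form $r=a(1-q^{-1})+\Omega\bigl(\xi,\sigma_q\xi+(1+q^{-1})w\bigr)$ for one and the same scalar $a$; the residual condition is $(1-q^{-1})(1-\widehat q^{-1})(a-\widehat a)=0$, which forces $a=\widehat a$ since $q,\widehat q\neq1$. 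So the commutation classes carry an extra scalar invariant and are indexed by $(a,\xi)\in\R\times\mathcal{E}_+$, not by $\xi$ alone. Your set $K_\xi$ -- "all $(q,r,u)$ with $u_+=\nu(q,\xi)$, $u_0$ arbitrary, and $r$ determined up to one free scalar parameter from the ${\rm H}$-direction" -- is $(\dim\mathcal{E}_0+2)$-dimensional, is not Abelian (the central elements $(1,r,0)$ with $r\neq0$ do not commute with any $(q,r,u)$, $q\neq1$, as the scalar condition reduces to $q^{-1}r=r$), and the asserted equivalence "commute iff common $\xi$" fails. The paper avoids this by exhibiting the explicit injective homomorphisms $(q,w)\mapsto J(a,z,q,w)=\bigl(q,\,a(1-q^{-1})+\Omega(z,\sigma_qz+(1+q^{-1})w),\,(\sigma_q-1)z+w\bigr)$, whose images $K_{a,z}\cong(0,\infty)\times\mathcal{E}_0\cong\R^{k}$ are the correct classes.

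The freeness argument also has a gap. Freeness means no nonidentity element fixes \emph{any} point, so exhibiting a $t$ with $u(t)\neq0$ only shows the element moves \emph{some} points; you must rule out fixed points at the $t$'s where $u(t)=0$ (and there the $s$-component $-\langle\dot u(t),2v\rangle+r$ can also vanish for suitable $v$). What saves the day is that after reducing to $q=1$ the only elements of $K_{a,z}\cap{\rm H}$ have $u=w\in\mathcal{E}_0$, and a nonzero $w\in\mathcal{E}_0=\ker B$ satisfies $C_qw(t/q)=w(t)$ for all $q$, so its components in a basis as in \eqref{eqn:fit_A_g} are nonvanishing power functions $t\mapsto t^{m+1-2j}w^j(1)$ on $(0,\infty)$; hence $w(t)\neq0$ for every $t$ and the $V$-component moves every point. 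This step uses $\mathcal{E}_0=\ker B$ in an essential way and is not supplied by your general remark that a nonzero solution of the second-order ODE has isolated zeros.
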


\begin{proof}
 Define $J\colon\R\times\mathcal{E}_+\times(0,\infty)\times\mathcal{E}_0\to{\rm G}_0\smallsetminus \mathrm{H}$ by
\[%\begin{equation}\label{jot}
J(a,z,q,w)=\left(q\,,\,a(1-q^{-1}) + \Omega(z,\sigma_q z+(1+q^{-1})w)\,,\,(\sigma_q-1)z+w\right).
\]%\end{equation}
It is an obvious consequence of Lemma \ref{lem:E0andE+}, (\ref{eqn:operations_LH}-i) and \eqref{eqn:almost_symp} that
\begin{enumerate}[i)]
\item $J$ maps $\R\times\mathcal{E}_+\times[(0,\infty)\smallsetminus\{1\}]\times\mathcal{E}_0$ 
bijectively onto $\mathrm{G}_0\smallsetminus \mathrm{H}$,
\item $J(a,z,\,\cdot\,,\,\cdot\,)\colon (0,\infty)\times\mathcal{E}_0\to{\rm G}_0$ is an injective group homo\-mor\-phism whenever $(a,z)\in\R\times\mathcal{E}_+$.
\end{enumerate}    
By (ii), the images $K_{a,z}$ of $J(a,z,\,\cdot\,,\,\cdot\,)$, with
$(a,u)\in\R\times\mathcal{E}_+$, are connected Abel\-i\-an Lie sub\-groups of
${\rm G}_0$, isomorphic to $\R$ (if $\mathcal{E}_0=\{0\}$) or $\R^2$ (when $\dim\mathcal{E}_0=1$), while, due to (i), the family $\{K_{a,z}\smallsetminus\mathrm{H}:(a,z)\in\R\times\mathcal{E}_+\}$ is a partition of ${\rm G}_0\smallsetminus\mathrm{H}$.

We now define $F:{\rm G}_0\smallsetminus\mathrm{H}\to\R\times\mathcal{E}_+$ by
$F(J(a,z,q,w))=(a,z)$ when $q\neq 1$, which makes sense according to (i).
Thus, $F$ associates with $\varPhi\in{\rm G}_0\smallsetminus\mathrm{H}$ the
unique $(a,z)$ for which $\varPhi\in K_{a,z}$. It follows that
\begin{enumerate}[i)]\setcounter{enumi}{2}
\item $\varPhi,\widehat{\varPhi}\in {\rm G}_0\smallsetminus\mathrm{H}$ commute if and only if $F(\varPhi)=F(\widehat{\varPhi})$.
\end{enumerate}    
In fact, the \enquote*{if} part is obvious as $\varPhi$ and $\widehat{\varPhi}$ then lie in the same Abel\-i\-an sub\-group of ${\rm G}_0$. By (\ref{eqn:operations_LH}-i), two elements $(q,r,u),(\widehat{q},\widehat{r},\widehat{u}) \in {\rm G}_0$ commute if and only if
\begin{equation}\label{eqn:commutation_generic}
  r+q^{-1}\widehat{r} - \Omega(u,\sigma_q\widehat{u}) = \widehat{r}+\widehat{q}^{-1}r-\Omega(\widehat{u},\sigma_{\widehat{q}}u)\quad\mbox{and}\quad (\sigma_q-1)\widehat{u} = (\sigma_{\widehat{q}}-1)u.
\end{equation}We now prove the `only if' part of (iii) assuming -- see (i) -- that $J(a,z,q,w)$ commutes with $J(\widehat{a},\widehat{z},\widehat{q},\widehat{w})$ and $q\neq 1\neq \widehat{q}$. The second equality of \eqref{eqn:commutation_generic} with $(u,\widehat{u})=((\sigma_q-1)z+w,(\sigma_{\widehat{q}}-1)\widehat{z}+\widehat{w})$ yields $z=\widehat{z}$, since -- by Lemma \ref{lem:E0andE+}(b) -- the operators $\sigma_q-1$ annihilate $\mathcal{E}_0$, and form a family of mutually commuting auto\-mor\-phisms when restricted to $\mathcal{E}_+$ (the latter since $q\mapsto\sigma_q$ is a homo\-mor\-phism). As $z=\widehat{z}$, the first equality of \eqref{eqn:commutation_generic}, for $(u,\widehat{u})$ chosen above, and $(r,\widehat r)$ replaced by the $\R$-components of $J(a,z,q,w)$ and $J(\widehat{a},\widehat{z},\widehat{q},\widehat{w})$, easily yields $a=\widehat{a}$, and (iii) follows. The conclusion is now immediate from Lemma \ref{lem:TCP_equiv}(a).
\end{proof}

\section{Proof of Theorem \ref{teoA:gen_cpt_translational}}\label{sec:proof_E}

We fix a generic compact rank-one ECS manifold $(M,\g)$. By Corollary \ref{teoA:uc_cpct_model}, the universal covering of $(M,\g)$ is isometrically identified with a model $(\widehat{M},\hg)$ as in \eqref{eqn:ECS_data}--\eqref{eq:ECS_model}, and then $\widehat{M}/\Gamma = M$, as at the beginning of Section \ref{sec:comp_Dperp}, where \eqref{eqn:passing_to} also allows us to assume that $\Gamma \subseteq {\rm G}_0$, cf. \eqref{eq:identity_comp}.

By Remark \ref{rem:conclusions_E}, the ``translational'' conclusion about $(M,\g)$, which we prove in the subsequent paragraphs, implies the other two assertions of Theorem \ref{teoA:gen_cpt_translational}.

Next, suppose that, on the contrary, our $(M,\g)$ is not translational. The dichotomy \eqref{eqn:dichotomy} in the Introduction and \cite[Corollary D]{DT_3} imply that $(\widehat{M},\hg)$ is locally homogeneous. Lemma \ref{lem:open_sub} now allows us to write $\widehat{M} = (a,b)\times \R\times V$. Then, $(a,b)=(0,\infty)$, or else all elements $\varPhi = (q,r,u)\in\Gamma$, acting on $\widehat{M}$ via (\ref{eqn:operations_LH}-iv), would have $q=1$ (as $t\mapsto qt$ maps $(a,b)$ onto itself), leading to \eqref{eqn:t_descends}.

  The group $\Sigma = \Gamma \cap \mathrm{H}$, cf. Remark \ref{rem:identify_H}, is the kernel of the homomorphism
  \begin{equation}\label{eq:homomorphism_phi}
    \Gamma\ni (q,r,u) \mapsto q\in (0,\infty).
  \end{equation}
As an immediate consequence of \cite[Lemma 2.2(b) and (f) in Section 3]{DT_3} and \cite[Theorems A, B, and Lemma 2.1]{DT_3},\begin{equation}\label{eqn:trivial_int_flow}
  \parbox{.855\textwidth}{$\Sigma \cap (\{1\}\times \R\times \{0\})$ is trivial and the image of \eqref{eq:homomorphism_phi} is dense in $(0,\infty)$.}
\end{equation}
The last line in \eqref{eqn:E} now has three consequences. First, $\Sigma \ni (1,r,u)\mapsto u\in\mathcal{E}$ is a homomorphism, and its injectivity due to \eqref{eqn:trivial_int_flow} implies that $\Sigma$ is Abelian. Secondly, the image $\Lambda$ of this last homomorphism spans a subspace $\mathcal{L}$ of $(\mathcal{E},\Omega)$ which is isotropic, in the sense that $\Omega = 0$ on it. Finally, $\R \times \mathcal{L}$ is an Abelian subgroup of $\mathrm{H}$ (see Remark \ref{rem:identify_H}) containing $\Sigma$, and its group operation coincides with the addition in the vector space $\R \times \mathcal{L}$. Applying Remark \ref{rem:comp_fibrations} to $\Sigma$ rather than $\Gamma$ and the subspace $\mathcal{Z}$ of $\R\times\mathcal{L}$ spanned by $\Sigma$, we see that
\begin{equation}\label{eqn:dichotomy_lattice} \parbox{.565\textwidth}{$\Sigma$ is a lattice in $\mathcal{Z}$ and either $\mathcal{Z} = \R\times\mathcal{L}$, or $\mathcal{Z}\,$ is a hyperplane in $\,\R\times\mathcal{L}\,$ transverse to $\,\R\times\{0\}$.}\end{equation}
Any $(q,r,u)\in \Gamma$ leads to the conjugation mapping ${\rm C}_{q,r,u}\colon {\rm H}\to {\rm H}$ given by
\begin{equation}\label{defn_Cqru}
 {\rm C}_{q,r,u}(\widehat{r},\widehat{u}) = \left(q^{-1}\widehat{r}-2\Omega(u,\sigma_q\widehat{u}), \sigma_q\widehat{u}\right),
\end{equation}
cf. Remark \ref{rem:identify_H} and the lines after \eqref{eqn:operations_LH} with $\widehat{q}=1$. This makes ${\rm C}_{q,r,u}$ a linear en\-do\-mor\-phism of $\R\times\mathcal{E}$ and, by \eqref{eqn:spectrum_sigmaq}, for $\Lambda$ and $\mathcal{Z}$ as in the lines before \eqref{eqn:dichotomy_lattice},
\begin{equation}\label{eqn:spectrum_Cqru} \parbox{.74\textwidth}{the spectrum of ${\rm C}_{q,r,u}$ consists of $q^{-1}$ and the spectrum of $\sigma_q$, while $\,{\rm C}_{q,r,u}(\Sigma)=\Sigma\,$, so that $\,\sigma_q(\Lambda)=\Lambda\,$ and $\,{\rm C}_{q,r,u}(\mathcal{Z})=\mathcal{Z}\,$.}\end{equation}
From \eqref{eqn:spectrum_Cqru}, due to \eqref{idenfity_G0_interval}, \eqref{defn_sigma_q}, and the denseness conclusion in \eqref{eqn:trivial_int_flow},
\begin{equation}
 \parbox{.58\textwidth}{$\sigma_q(\Lambda) = \Lambda$ and $\sigma_q(\mathcal{L}) = \mathcal{L}$ for every $q \in (0,\infty)$.}
\end{equation}
It follows that ${\rm rank}\,\Sigma \leq 1$. Namely, by \eqref{eqn:spectrum_Cqru} and \eqref{defn_Cqru}, each \hbox{${\rm C}_{q,r,u} \in \mathfrak{gl}(\R \times \mathcal{E})$} leaves invariant the subspaces $\mathcal{Z}$ and $\R \times \{0\}$, as well as $\mathcal{Z}' = \mathcal{Z} \cap (\R \times \{0\})$, so that it descends to a linear endomorphism of the quotient $\mathcal{Z}/\mathcal{Z}'$ and, in view of \eqref{defn_Cqru}, the isomorphism $\mathcal{Z}/\mathcal{Z}' \to \mathcal{L}$ induced by the projection $(r,u) \mapsto u$ makes the latter endomorphism correspond to $\zeta_q:\mathcal{L} \to \mathcal{L}$ arising as the restriction of $\sigma_q$ to $\mathcal{L}$. The spectrum of ${\rm C}_{q,r,u}$ acting on $\mathcal{Z}$ thus equals the spectrum of $\zeta_q$ in $\mathcal{L}$, augmented -- only when $\mathcal{Z} = \R \times \mathcal{L}$ in \eqref{eqn:dichotomy_lattice} -- by the eigenvalue $q^{-1}$. At the same time, the infinitesimal generator of the Lie-group homomorphism $(0,\infty) \ni q \mapsto \zeta_q\in {\rm GL}(\mathcal{L})$, cf. \eqref{defn_Cqru}, being the restriction to $\mathcal{L}$ of $B$ appearing in \eqref{eqn:specB}, has the spectrum $\alpha(1),\ldots,\alpha(k)$ which is a part of that in \eqref{eqn:specB}. For reasons stated three lines after \eqref{eqn:spec_B_general}, $\zeta_q$ then must have spectrum $q^{\alpha(1)},\ldots,q^{\alpha(k)}$. Thus, the spectrum of $C_{q,r,u}:\mathcal{Z} \to \mathcal{Z}$ consists of
\begin{equation}\label{eqn:restricted_spectrum}
  \parbox{.4\textwidth}{$q^{\alpha(1)},\ldots,q^{\alpha(k)}$ and, possibly, $q^{-1}$,}
\end{equation}
which are complex powers of $q$ with exponents not depending on $q$, so that, as ${\rm C}_{q,r,u}(\Sigma) = \Sigma$ in
\eqref{eqn:spectrum_Cqru}, the trace of ${\rm C}_{q,r,u}:\mathcal{Z} \to \mathcal{Z}$, being integer-valued and continuous in $q$, must be constant. By \eqref{eqn:sum_powers_cte}, the eigenvalues \eqref{eqn:restricted_spectrum} are all equal to $1$, which excludes $q^{-1}$ and, due to continuity in $q$, gives $\alpha(1)=\ldots=\alpha(k) = 0$. From Lemma \ref{lem:E0andE+}(a), it now follows that $k = {\rm rank}\,\Sigma \leq 1$.

  However, the conclusion that ${\rm rank}\,\Sigma\leq 1$ further implies that $\Gamma$ is Abelian. Indeed, when $\Sigma$ is trivial, injectivity of \eqref{eq:homomorphism_phi} makes $\Gamma$ isomorphic to a subgroup of $(0,\infty)$, while if ${\rm rank}\,\Sigma = 1$, we fix a generator $(1,b,w)$ of $\Sigma$, noting that \eqref{eqn:trivial_int_flow} gives $w\neq 0$ and $\sigma_qw=w$ for every $q\in (0,\infty)$. Thus, the commutator \hbox{$[\gamma,\widehat{\gamma}]\in \Sigma$} of any two elements $\gamma = (q,r,u)$ and $\widehat{\gamma} = (\widehat{q},\widehat{r},\widehat{u})$ in $\Gamma$ equals $(1,\ell b,\ell w)$, for some $\ell\in \Z$. By (\ref{eqn:operations_LH}-iii), \hbox{$(\sigma_q-1)\widehat{u} - (\sigma_{\widehat{q}}-1)u = \ell w \in \mathcal{E}_0 \cap\mathcal{E}_+ = \{0\}$}, cf. Lemma \ref{lem:E0andE+}(a). Consequently, $\ell=0$ and $[\gamma,\widehat{\gamma}] = (1,0,0)$ as required.

  Now comes the contradiction: if $\Gamma$ were Abelian, Lemmas \ref{lem:TCP_equiv}${\rm (b)}$ and \ref{teo:TCP_iso} would give $\Gamma \subseteq {\rm H}$ or $\Gamma \subseteq K$ for some subgroup $K$ of ${\rm G}_0$ isomorphic to $\R^k$, $k\in \{1,2\}$, acting freely on $\widehat{M}$. The former case leads to \eqref{eqn:t_descends}, while the latter contradicts Lemma \ref{lem:free_principal} as $\dim \widehat{M} > 2$.

\section{The four-dimensional case: proof of Corollary \ref{cor:dim4_goodbye}}\label{sec:4dim_goodbye}

Assume, on the contrary, that there exists a four-dimensional compact rank-one ECS manifold $(M,\g)$, and let $\Gamma$ be its fundamental group. As $(M,\g)$ is generic -- see the very end of Section \ref{sec:comp_Dperp} -- it must be translational by Theorem \ref{teoA:gen_cpt_translational}, while Corollary \ref{teoA:uc_cpct_model} allows us to identify its universal covering with a model $(\widehat{M},\hg)$ as in \eqref{eqn:ECS_data}--\eqref{eq:ECS_model}, so that $\Gamma\subseteq {\rm Iso}(\widehat{M},\hg)$ and $\widehat{M}/\Gamma = M$. Applying \eqref{eqn:passing_to} if necessary, we use Remark \ref{rem:conclusions_E} to conclude that $I=\R$ in \eqref{eqn:ECS_data} and that every $\gamma\in \Gamma$ has the form $\gamma=(1,p,{\rm Id},r,u)$, with $p\in \R$ and \hbox{$(r,u)\in {\rm H}$} (cf. Theorem \ref{teo:iso_heis}). Here, the ${\rm O}(V)$-component of $\gamma$ is assumed to be trivial due to genericity combined with \eqref{eqn:passing_to}. The image $\mathrm{P}$ of the homomorphism $\Gamma \ni \gamma\mapsto p\in \R$ is infinite cyclic as its being dense (or, trivial) would imply constancy of $f$ via the last condition in \eqref{eqn:group_Q} (or, lead to \eqref{eqn:t_descends}).  While ${\rm G} = \{  (1,p,{\rm Id},r,u)\in {\rm Iso}(\widehat{M},\hg) :  p\in \mathrm{P}\mbox{ and }(r,u)\in {\rm H}\}$ contains $\Gamma$ as a subgroup, no subgroup of ${\rm G}$ can act on $\widehat{M}$ freely and properly discontinuously with a compact quotient, as shown in \cite[Theorem 8.1]{AGAG10}.

\appendix

\section{How \cite[Lemma 7.3]{JGP08} leads to Theorem \ref{teoA:generic_mc}}\label{app:technical} 

With $u$ in \cite[Lemma 7.3]{JGP08} being ${\bf w}$ in \eqref{eqn:npg_D}, we fix \hbox{$y\colon I\to\widetilde{M}$} parametrized by $t$ (see \cite[Remark 7.2]{JGP08}) and consider the differential equation
\begin{equation}\label{eqn:diff_eq_Q}
 \nabla_{\!\dot{y}}\nabla_{\!\dot{y}}{\bf z} + R(\dot{y},{\bf z})\dot{y} + \nabla_{\!\dot{y}}\dot{y} = -\frac{Q({\bf z}){\bf w}}{4}, 
\end{equation}
where $Q({\bf z}) = 3\langle A({\bf z}+\widetilde{\mathcal{D}}), {\bf z}+\widetilde{\mathcal{D}}\rangle^{\boldsymbol \cdot} + 3f\tg({\bf z},{\bf z})^{\boldsymbol\cdot} + 2\,\,\dispdot{\!\!f} \tg({\bf z},{\bf z})$, imposed on sections ${\bf z}$ of $\widetilde{\mathcal{D}}^\perp$ along $y$. Here $\langle\cdot,\cdot\rangle$ stands for the inner product on the vector space $V$ of parallel sections of $\widetilde{\mathcal{D}}^\perp\!\!/\widetilde{\mathcal{D}}$. For a maximal solution ${\bf z}$ of \eqref{eqn:diff_eq_Q}, we set \hbox{$x(t,s) = \exp_{y(t)} s{\bf z}(t)$}, and observe that ${\bf z}$ and $x$ are now defined on $I$ and $I\times \R$ instead of $\R$ and $\R^2$ as in \cite{JGP08}: namely, a solution ${\bf z}_0$ of $\nabla_{\!\dot{y}}\nabla_{\!\dot{y}}{\bf z} + R(\dot{y},{\bf z})\dot{y} + \nabla_{\!\dot{y}}\dot{y}=0$ is clearly defined on all of $I$, while for a function $\mu\colon I\to \R$ with $\ddot{\mu} = Q({\bf z}_0)/4$, ${\bf z} = {\bf z}_0 - \mu{\bf w}$ is a solution to \eqref{eqn:diff_eq_Q}. With the subscript convention referred to in \eqref{eqn:derivatives_symmetric}, the vector field ${\bf v}$ along $x$ having ${\bf v}_s = 0$ for all $(t,s)$ and \hbox{${\bf v} = \nabla_{\!\dot{y}}\dot{y}$} for $s=0$ satisfies the conditions \hbox{$x_{tt} + (s-1)({\bf v} - Q(x_s){\bf w}/4) = 0$} and \hbox{$[Q(x_s)]_s = 0$}: they are precisely \cite[formula (9)]{JGP08}, being established here by the argument given there repeated \emph{verbatim}. It follows that \hbox{$I\ni t\mapsto x(t,1) \in \widetilde{M}$} is a maximal geodesic, which can be chosen to realize any $t$-normalized initial velocity.

\section{From \cite[Theorem 7.1]{JGP08} to Theorem \ref{teoA:uc_model}}\label{app:model}

Due to maximal completeness and \eqref{eqn:t-levels}, we may fix a maximal null geodesic $x\colon I\to \widetilde{M}$ parametrized by $t$ (cf. \cite[Remark 7.2]{JGP08}), and define a parallel field $\Pi$ of Lorentzian planes along $x$ by $\Pi_t = \R\dot{x}(t)\oplus\widetilde{\mathcal{D}}_{x(t)}$. As $\widetilde{\mathcal{D}}_{x(t)}^\perp = \Pi_t^\perp \oplus \widetilde{\mathcal{D}}_{x(t)}$ and $V$ is the space of parallel sections of the quotient bundle $\widetilde{\mathcal{D}}^\perp\hskip-2pt/\widetilde{\mathcal{D}}$, for every $t\in I$ we have an isomorphism $V \to \widetilde{\mathcal{D}}_{x(t)}^\perp/\widetilde{\mathcal{D}}_{x(t)} \to \Pi_t^\perp$, the image of $v\in V$ under which will be denoted by $v(t)$. Therefore, for each $t\in I$ and ${\bf w}$ as in \eqref{eqn:npg_D},
\begin{equation}\label{eqn:isoRV}
  \parbox{.83\textwidth}{$\R\times V \ni (s,v) \mapsto v(t) + \dfrac{s{\bf w}_{x(t)}}{2} \in \widetilde{\mathcal{D}}^\perp_{x(t)}$ is an obvious isomorphism.}
\end{equation}

By \cite[Lemma 5.1]{local_structure}, $F\colon \widehat{M} \to \widetilde{M}$ given by $F(t,s,v) = \exp_{x(t)}(v(t) + s{\bf w}_{x(t)}/2)$ has $F^*\tg = \hg$, and so, due to the italicized statement following \eqref{eqn:t-levels}, $F_\ast \widehat{\mathcal{D}} = \widetilde{\mathcal{D}}$. As the leaves of $\widetilde{\mathcal{D}}^\perp$ are simply connected -- see \cite[Theorem B]{DT_2} --  and their induced connections are flat (Remark \ref{obs:tg_leaves}), each slice $\{t\}\times \R\times V$ is diffeomorphically mapped under $F$ onto the leaf of $\widetilde{\mathcal{D}}^\perp$ passing through $x(t)$, so that injectivity of \eqref{eqn:isoRV} yields the one of $F$. Finally, the definition of maximal completeness and surjectivity of \eqref{eqn:isoRV} imply that $F$ is surjective as well. Hence, $F$ is a global isometry.

\bibliography{ECS_dilation_refs}{}
\bibliographystyle{plain}

\end{document}